\DeclareMathAlphabet{\mathpzc}{OT1}{pzc}{m}{it}
\newtheorem{theorem}{Theorem}%[chapter]
\newtheorem{proposition}[theorem]{Proposition}
\newtheorem{corollary}[theorem]{Corollary}
\newtheorem{lemma}[theorem]{Lemma}
\theoremstyle{definition}
\newtheorem{definition}{Definition}%[chapter]
\newtheorem{example}{Example}%[chapter]
\newtheorem{remark}{Remark}%[chapter]
\title{On the real differential of a slice regular function} % Title
\author{Amedeo Altavilla} % Author name
\thanks{Dipartimento di Ingegneria Industriale e Scienze Matematiche 
Universit\`a Politecnica delle Marche, Via Brecce Bianche,
I-60131 Ancona, Italy, amedeoaltavilla@gmail.com}
\subjclass[2010]{ 30G35, 30B10, 58A10 }
\keywords{function of a hypercomplex variable; slice regular functions; differential forms}
\begin{document}

% % \title{On the real differential of a slice regular function}
% % %\subtitle{Do you have a subtitle?\\ If so, write it here}
% % 
% % %\titlerunning{Short form of title}        % if too long for running head
% % 
% % \author{Amedeo Altavilla}
% % 
% % %\authorrunning{Short form of author list} % if too long for running head
% % 
% % \institute{A. Altavilla \at
% %               Department of Mathematics, University of Trento, Via Sommarive 14 I-38123 Italy. \\
% % %               Tel.: +123-45-678910\\
% % %               Fax: +123-45-678910\\
% %               \email{amedeo.altavilla@unitn.it}           %  \\
% % %             \emph{Present address:} of F. Author  %  if needed
% % %            \and
% % %            S. Author \at
% % %               second address
% % }
% % 
% % \date{Received: date / Accepted: date}
% % % The correct dates will be entered by the editor

\begin{abstract}
In this paper we show that the real differential of any injective slice regular function is  everywhere invertible.
The result is a generalization of a theorem proved by G. Gentili, S. Salamon and C. Stoppato and it is obtained thanks, in 
particular, to some new information regarding the first coefficients of a certain polynomial expansion for slice regular functions
(called \textit{spherical expansion}), and to a new general result which says that the slice derivative of any injective slice 
regular function 
is different from zero. A useful tool proven in this paper is a new formula that relates slice and spherical derivatives of a slice regular function.
Given a slice regular function, part of its singular set is described as the union of surfaces on which it results to be constant.
\end{abstract}
\maketitle

%\keywords{function of a hypercomplex variable  \and slice regular functions \and power series  \and differential forms}
%% \PACS{PACS code1 \and PACS code2 \and more}
%\subclass{30G35   \and 30B10 \and 58A10}

\section{Introduction}

In \cite{stoppato} and \cite{gensalsto}, the authors started an interesting investigation about the real differential 
of a slice regular function (see also \cite{genstostru}, Chapter 8.5),  which gave interesting results both from theoretical and applicative point of views.
Let us firstly describe in few words the actors of this story which will be properly formalized in the next section.

Denoting by $\mathbb{H}$ the real algebra of quaternions and by 
$\mathbb{S}\subset\mathbb{H}$ the subset of imaginary units,
\begin{equation*}
\mathbb{S}:=\{q\in\mathbb{H}\,|\,q^{2}=-1\},
\end{equation*}
we can write any quaternion as $x=\alpha+I\beta$,
where $\alpha$ and $\beta$ are real numbers and $I\in\mathbb{S}$.

A \textit{slice function} $f:\Omega\subseteq\mathbb{H}\rightarrow\mathbb{H}$
is a quaternionic function of one quaternionic variable that is $\mathbb{H}$-left
affine with respect to the imaginary unit, i.e.
 such that, 
for each $x=\alpha+I\beta\in\Omega$, it holds,
\begin{equation*}
f(x)=F_{1}(\alpha, \beta)+IF_{2}(\alpha, \beta),
\end{equation*}
where, $F_{1}$ and $F_{2}$ satisfy an additional technical requirement.

A \textit{slice regular function} is a slice function $f$ such that, for any 
$I\in\mathbb{S}$ its restriction to the complex line
$\mathbb{C}_{I}:=span_{\mathbb{R}}\{1,I\}\subset\mathbb{H}$ is a holomorphic map.

The theory standing on this notion of regularity, introduced by Cullen in \cite{cullen}, is rapidly growing in the last years, thanks firstly 
to the authors of \cite{colgensabstru,gentilistruppa1,gentilistruppa}, who have set down the groundwork.

The main purpose of this paper is to \textit{extend}, by removing the hypothesis concerning the domain, the next theorem stated in \cite{gensalsto}.

\begin{theorem}[\cite{gensalsto}, Corollary 3.10]\label{teoremadaestendere}
 Let $f:\Omega\rightarrow \mathbb{H}$ be an injective slice regular function with $\Omega\cap\mathbb{R}\neq\emptyset$. 
 Then its real differential is everywhere invertible.
\end{theorem}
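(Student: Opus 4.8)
\noindent\emph{Sketch of a proof.}
The real differential of a slice regular function admits a pointwise description that reduces the statement to the non-vanishing of two derived quantities. Denote by $\partial_c f$ the slice (Cullen) derivative and by $\partial_s f$ the spherical derivative of $f$. At a non-real point $q=\alpha+I\beta$ the real tangent space of $\mathbb{H}$ at $q$ splits as the direct sum of the slice $\mathbb{C}_{I}$ and of the tangent plane $T_q(\alpha+\beta\mathbb{S})$ to the $2$-sphere through $q$: on $\mathbb{C}_{I}$ the differential $df_q$ is right multiplication by $\partial_c f(q)$, while on $T_q(\alpha+\beta\mathbb{S})$ it is a linear map of rank $2$ precisely when $\partial_s f(q)\neq0$; at a real point $\alpha$ one has $df_\alpha(v)=v\,\partial_c f(\alpha)$. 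Granting from the analysis of the real differential that, when $\partial_c f(q)$ and $\partial_s f(q)$ are both non-zero, the two image planes are in general position --- the point where a relation between the slice and the spherical derivatives intervenes --- one concludes that $df_q$ is invertible exactly when $\partial_c f(q)\neq0$ and $\partial_s f(q)\neq0$, the latter condition being vacuous at real points. It therefore suffices to prove that, if $f$ is injective, then neither $\partial_c f$ nor $\partial_s f$ vanishes anywhere on $\Omega$.

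The spherical derivative is immediate. If $\partial_s f(q_0)=0$ at a non-real $q_0=\alpha_0+I_0\beta_0$, write $f(\alpha+I\beta)=F_1(\alpha,\beta)+I\,F_2(\alpha,\beta)$; then $F_2(\alpha_0,\beta_0)=0$, so $f$ is constantly equal to $F_1(\alpha_0,\beta_0)$ on the $2$-sphere $\alpha_0+\beta_0\mathbb{S}$, which has more than one point. This contradicts injectivity, so $\partial_s f$ never vanishes.

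For the slice derivative at a \emph{real} point $\alpha_0\in\Omega\cap\mathbb{R}$ --- and here the hypothesis $\Omega\cap\mathbb{R}\neq\emptyset$ enters --- one uses the genuine power series expansion $f(q)=\sum_{n\geq0}(q-\alpha_0)^{n}a_n$ valid near $\alpha_0$. If $\partial_c f(\alpha_0)=a_1=0$ then, $f$ being non-constant, there is a least $k\geq2$ with $a_k\neq0$, and near $\alpha_0$ one factors $f(q)-a_0=(q-\alpha_0)^{k}\,h(q)$ with $h$ slice regular and $h(\alpha_0)=a_k$ invertible. Since the map $q\mapsto(q-\alpha_0)^{k}$ already identifies distinct points in every neighbourhood of $\alpha_0$ (for instance $(\alpha_0+\varepsilon I)^{k}=(\alpha_0+\varepsilon J)^{k}$ for suitable imaginary units $I\neq J$), and the invertible factor $h$ can be absorbed by a continuity or local-degree argument, $f$ fails to be injective near $\alpha_0$; hence $\partial_c f(\alpha_0)\neq0$.

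The main obstacle is to exclude zeros of $\partial_c f$ at \emph{non-real} points. Arguing one slice at a time does not suffice: by the splitting lemma the restriction of $f$ to a slice $\mathbb{C}_{I}$ is a holomorphic curve in $\mathbb{C}_{I}^{2}$, and such a curve may have vanishing derivative at a point and still be injective there (the cusp $t\mapsto(t^{2},t^{3})$). What must be exploited is the rigid coupling between the restrictions of $f$ to different slices encoded in the representation formula, equivalently the behaviour of the first coefficients of the spherical expansion of $f$ at the point in question; the goal is a statement of the form ``the slice derivative of an injective slice regular function is nowhere zero'', to be combined with the reduction of the first paragraph. On a domain meeting $\mathbb{R}$ there is an extra leverage: $\partial_c f$ is itself slice regular, so if $f$ is non-constant its zero set is a discrete union of real points and of $2$-spheres; since we already know $\partial_c f$ does not vanish on $\Omega\cap\mathbb{R}$, the identity principle constrains this set drastically, and the surviving isolated non-real possibilities are to be ruled out through the spherical-expansion analysis. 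This is where I expect the substantive work to lie.
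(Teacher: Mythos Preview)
Your reduction in the first paragraph is not correct, and this is the essential gap. You write that ``$df_q$ is invertible exactly when $\partial_c f(q)\neq0$ and $\partial_s f(q)\neq0$'', hedging with a phrase about the image planes being ``in general position''. But that hedge is precisely the content of the problem: Proposition~\ref{rank} of the paper (from \cite{gensalsto}) says that at a non-real $q=\alpha+J\beta$ with $\partial_s f(q)\neq0$, the differential $df_q$ fails to be invertible if and only if
\[
\frac{\partial f}{\partial x}(q)\,(\partial_s f(q))^{-1}\in\mathbb{C}_J^{\bot}.
\]
This can happen with both derivatives nonzero; nothing you have written excludes it. So even granting non-vanishing of both $\partial_c f$ and $\partial_s f$, you have not finished, and the ``general position'' step you wave past is where the actual argument lives.

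The paper's route is as follows. First, one proves (Theorem~\ref{thmslder}) that the slice derivative of an injective slice regular function is nowhere zero --- this is the step you correctly identify as nontrivial and do not carry out; the paper does it via the splitting lemma and a valence argument from one-variable complex analysis. Second, the spherical derivative is nonzero by your argument. Then, rather than trying to show the orthogonality condition fails directly, the paper invokes the characterization $x_0\in N_f$ $\Leftrightarrow$ $f(x)-f(x_0)=(x-x_0)\cdot(x-\widetilde{x_0})\cdot g(x)$ for some $\widetilde{x_0}\in\mathbb{S}_{x_0}$ (Theorem~\ref{thmchar}), writes $f(x)-f(x_0)=(x-x_0)\cdot g(x)$, and uses the representation formula to compute $g$ on the whole sphere $\mathbb{S}_{x_0}$ from the two known values $g(x_0)=\partial_c f(x_0)$ and $g(x_0^c)=\partial_s f(x_0)$. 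An explicit algebraic computation then shows that if $g$ vanished at some $x_1\in\mathbb{S}_{x_0}\setminus\{x_0,x_0^c\}$, the quotient $\partial_c f(x_0)(\partial_s f(x_0))^{-1}$ would equal $-(1-IJ)^{-1}(1+IJ)$ for some $I\neq\pm J$, which has nonzero real part and hence cannot lie in $\mathbb{C}_J^{\bot}$ --- contradicting Proposition~\ref{rank}. This last step is exactly the ``general position'' verification your sketch omits.

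Your treatment of real points via the power series is fine in spirit, and your treatment of $\partial_s f$ is correct. But the two genuine pieces of work --- non-vanishing of $\partial_c f$ at non-real points, and the exclusion of the orthogonality condition --- are both missing, and the second is hidden by a false simplification of the invertibility criterion.
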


Many results about
slice regular functions defined over domains that intersects the real line, does not extend in an automatic way to functions defined over domains that do
not have real points (see e.g. \cite{altavilla,ghiloniperotti,ghiloniperotti2,ghiloniperotti3}). 

Even in this case, the proof of Theorem \ref{teoremadaestendere} contained in \cite{gensalsto} can not
be adapted to our setting.
To obtain the goal, we apply results from \cite{gensalsto,ghiloniperotti3,stoppato}, taking into account the mentioned difference.

In particular, we firstly realize that a part of the theory can be formalized in a new way considering the \textit{slice factor} of the real differential
of a slice function: we introduce here the concept of \textit{slice differential} of a slice function. 
After that, we remember the notion of \textit{spherical analyticity} introduced in \cite{stoppato} for functions with domain intersecting the real axis, and in
\cite{ghiloniperotti3} for functions defined over any domain (in a more general context). We give some new information about
the first coefficients of the spherical expansion of a slice regular function, showing a new way to compute slice derivatives (see formula \ref{formula}).
Finally, starting from some results about the rank of the real differential of a slice regular function, we extend Theorem \ref{teoremadaestendere}, using,
moreover, a new proposition that generalizes, in our context, a classical theorem of complex analysis (see Theorem \ref{thmslder}).
While describing these materials we show that a slice regular function can
be constant either globally or on sets of real dimension two. 

The structure of the present work is the following.

In Section 2 we state the main preliminary results needed for the reader to understand the theory. 
In this section new results regarding the 
sets on which a slice regular function can be constant are showed.
For the non-original material, we mention as general references for this part,
the two books \cite{colsabstru,genstostru} and the paper \cite{ghiloniperotti}.
At the end we introduce the concept of slice differential of a slice function. 

Section 3 is divided into 2 subsections: in the first part we remember the main results about spherical 
analyticity and we discuss about the first coefficients of this expansion; in second part, 
we deal with the rank of the real differential of a slice regular function and prove the main theorem.

Most of the results presented in this paper are contained in the Ph.D. 
thesis of the author \cite{altavillaPhD}.

\section{Preliminary results}
We introduce in this section
some notion involved with the study of slice regular functions developed by R. Ghiloni and A. Perotti (see \cite{ghiloniperotti,ghiloniperotti2,ghiloniperotti3}).

In the real algebra of quaternions $\mathbb{H}$ we will denote with $x^c$ the usual conjugation, i.e.: if $x=x_0+ix_1+jx_2+kx_3\in\mathbb{H}$, then $x^c=x_0-ix_1-jx_2-kx_3$.
Let $\mathbb{H}_\mathbb{C}:=\mathbb{H}\otimes_\mathbb{R} \mathbb{C}$ be the complexification of $\mathbb{H}$. An element $x$ in $\mathbb{H}_\mathbb{C}$ will be
of the form $x=p+\sqrt{-1}q$, where $p$ and $q$ are quaternions. 
The space $\mathbb{H}_{\mathbb{C}}$ is a complex alternative algebra
with a unity 
with respect to the product defined by the formula
\begin{equation*}
(x+\sqrt{-1}y)(z+\sqrt{-1}w):=xz-yw+\sqrt{-1}(xw+yz).
\end{equation*}
In $\mathbb{H}_\mathbb{C}$ we define two commuting anti-involution acting on the element $x$ in the 
following ways:
\begin{itemize}
 \item $x^c=(p+\sqrt{-1}q)^c=p^c+\sqrt{-1}q^c$,
 \item $\overline{x}=\overline{(p+\sqrt{-1}q)}=p-\sqrt{-1}q$
\end{itemize}

Let now $D$ be a connected open set in $\mathbb{C}$, we remember the following definitions from \cite{ghiloniperotti}.
\begin{definition}[\cite{ghiloniperotti}]\label{stem}
 A function $F:D\rightarrow \mathbb{H}_\mathbb{C}$ is called a \textit{stem function} on $D$ if it is complex intrinsic, i.e.: if the condition $F(\overline{z})=\overline{F(z)}$ holds 
 for each $z\in D$ such that $\overline{z}\in D$. 
 Moreover we say that $F=F_1+\sqrt{-1}F_2$ has a certain regularity (e.g.: $\mathcal{C}^0$, $\mathcal{C}^k$, $\mathcal{C}^\omega$, etc),
 if the two components $F_1$ and $F_2$ have that regularity. 
\end{definition}

The last definition means that if $D$ is symmetric with respect to the real axis and 
$F_1, F_2: D\rightarrow \mathbb{H}$ are the quaternionic components of 
$F=F_1+\sqrt{-1}F_2$ then $F_1$ is even with respect to the imaginary
part of $z$ (i.e.: $F_1(\overline{z})=F(z)$), while $F_2$ is odd 
(i.e.: $F_2(\overline{z})=-F_2(z)$). Thanks to this fact, there is no loss of generality in
requiring $D\subset\mathbb{C}$ to be symmetric with respect to the real axis. 

\begin{definition}[\cite{ghiloniperotti}]
 Given any set $D\subset \mathbb{C}$ we define the \textit{circularization} of $D$ in $\mathbb{H}$ as the subset of $\mathbb{H}$ defined by:
 \begin{equation*}
  \Omega_D:=\{\alpha+ J\beta\in\mathbb{H}\,|\,\alpha+i\beta\in D\, ,\, J\in\mathbb{S}\}.
 \end{equation*}
 Moreover any set of this kind will be called a \textit{circular set}.
\end{definition}
\begin{remark}
If $D$ is a domain in $\mathbb{C}$ such that $D\cap\mathbb{R}\neq\emptyset$, then $\Omega_{D}$
is also called \textit{slice domain} (see \cite{gensalsto}, Definition 1.23).
\end{remark}
We will use the following notation: let $D$ be a subset of the complex plane $\mathbb{C}$, then we will denote with $D_J$ and  $D_J^+$ the following
sets:
\begin{equation*}
 D_J:=\Omega_D\cap \mathbb{C}_J,\quad D_J^+:=\Omega_D\cap \mathbb{C}_J^+,\quad\forall J\in\mathbb{S},
\end{equation*}
where $\mathbb{C}_J:=\{\alpha+J\beta\in \mathbb{H}\,\mid\,\alpha+i\beta\in \mathbb{C},\,J\in\mathbb{S}\}$ and $\mathbb{C}_J^+:=\{\alpha+J\beta\in \mathbb{C}_J\,|\,\beta\geq 0\}$. 
Moreover, we will call sets of the form $D_{J}$ and $D_{J}^{+}$ a \textit{slice}
and \textit{semi-slice} of $\Omega_{D}$, respectively.
\begin{remark}
Let $D\subset \mathbb{C}$ be any set. We denote $D^+:=\{z\in D\,|\, Im(z)>0\}$.
If $D\cap\mathbb{R}=\emptyset$, then $\Omega_{D}\simeq D^+\times \mathbb{S}$
and so $D_{J}\simeq D^+\times \{-J,J\}$, while $D_{J}^{+}\simeq D^+\times\{J\}$.
\end{remark}

\begin{definition}[\cite{ghiloniperotti}]
 A function $f:\Omega_D\rightarrow\mathbb{H}$ is called a \textit{(left) slice function} if it is induced by a stem function $F=F_1+\sqrt{-1}F_2$ on 
 $D$, $f=\mathcal{I}(F)$, in the following way:
 \begin{equation*}
  f(\alpha+ J\beta):=F_1(\alpha+i\beta)+JF_2(\alpha+i\beta),\quad \forall\, x=\alpha+J\beta\in\Omega_D.
 \end{equation*}
\end{definition}
We will denote by $\mathcal{S}(\Omega_D)$ and by $\mathcal{S}^k(\Omega_D)$, for any $k\in\mathbb{N}\cup\{\infty\}$, 
the real vector spaces and right $\mathbb{H}$-module of slice functions on $\Omega_D$ induced 
by continuous and of class $\mathcal{C}^{k}$ stem functions, respectively.
Thanks to Definition \ref{stem}, any slice function is well defined. Indeed, if $D$ is symmetric with respect to the real axis and 
$f=\mathcal{I}(F):\Omega_D\rightarrow \mathbb{H}$ is a slice function induced by
$F$, then $f(\alpha+(-J)(-\beta))=F_1(\alpha+i(-\beta))-JF_2(\alpha+i(-\beta))=F_1(\alpha+i\beta)-J(-F_2(\alpha+i\beta))=
F_1(\alpha+i\beta)+JF_2(\alpha+i\beta)=f(\alpha+J\beta)$.

For slice functions we have the following representation theorem. It says that if we know the values of a slice function over two different semi-slices 
then we can reconstruct the whole function. This is not a surprising result having
in mind the ``affine nature'' of a slice function with respect to the imaginary unit.
The precise statement is the following one.
\begin{theorem}[\cite{ghiloniperotti}, Proposition 6]\label{representationtheorem}
Let $f$ be a slice function on $\Omega_D$. If $J\neq K\in\mathbb{S}$ then, for every $x=\alpha+I\beta\in\Omega_D$, the following formula holds:
\begin{equation*}
 f(x)=(I-K)(J-K)^{-1}f(\alpha+J\beta)-(I-J)(J-K)^{-1}f(\alpha+K\beta).
\end{equation*}
In particular, for $K=-J$, we get the 
formula
\begin{equation*}
 f(x)=\frac{1}{2}(f(\alpha+J\beta)+f(\alpha-J\beta)-IJ(f(\alpha+J\beta)-f(\alpha-J\beta))).
\end{equation*}
\end{theorem}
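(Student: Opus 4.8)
The plan is to recover the inducing stem function from the two prescribed restrictions and then feed it back into the defining formula of the slice function. Fix the real numbers $\alpha,\beta$ with $x=\alpha+I\beta\in\Omega_D$ and set $z=\alpha+i\beta\in D$. Since $\Omega_D$ is circular, the points $\alpha+J\beta$ and $\alpha+K\beta$ also lie in $\Omega_D$, and by definition of $f=\mathcal{I}(F)$ all three evaluations involve the \emph{same} argument $z$:
$$f(x)=F_1(z)+IF_2(z),\qquad f(\alpha+J\beta)=F_1(z)+JF_2(z),\qquad f(\alpha+K\beta)=F_1(z)+KF_2(z).$$

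First I would subtract the second and third identities to obtain $f(\alpha+J\beta)-f(\alpha+K\beta)=(J-K)F_2(z)$. Because $J\neq K$, the quaternion $J-K$ is nonzero, hence invertible in the division algebra $\mathbb{H}$, so $F_2(z)=(J-K)^{-1}\big(f(\alpha+J\beta)-f(\alpha+K\beta)\big)$, and then $F_1(z)=f(\alpha+J\beta)-JF_2(z)$. Substituting these two expressions into $f(x)=F_1(z)+IF_2(z)$, keeping every scalar coefficient on the left since $\mathbb{H}$ is non-commutative, yields
$$f(x)=f(\alpha+J\beta)+(I-J)(J-K)^{-1}\big(f(\alpha+J\beta)-f(\alpha+K\beta)\big).$$
To bring this into the stated shape it suffices to note the identity $1+(I-J)(J-K)^{-1}=(I-K)(J-K)^{-1}$, obtained by writing $1=(J-K)(J-K)^{-1}$ and adding.

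There is essentially no genuine obstacle: the argument is a one‑line linear computation over $\mathbb{H}$. The only points deserving a remark are the invertibility of $J-K$ and the care needed to keep all multiplications on the left. For the special case $K=-J$ one substitutes $J-K=2J$ and $(2J)^{-1}=-\tfrac12 J$, and simplifies using $J^2=-1$: the coefficient of $f(\alpha+J\beta)$ becomes $(I+J)(2J)^{-1}=\tfrac12(1-IJ)$ and that of $f(\alpha-J\beta)$ becomes $-(I-J)(2J)^{-1}=\tfrac12(1+IJ)$, giving exactly $f(x)=\tfrac12\big(f(\alpha+J\beta)+f(\alpha-J\beta)-IJ(f(\alpha+J\beta)-f(\alpha-J\beta))\big)$; equivalently one may plug $K=-J$ directly into the general formula. (The case $\beta=0$ needs no separate treatment, since then $F_2(z)=0$ and all three values coincide with $F_1(\alpha)$.)
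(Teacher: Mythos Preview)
Your argument is correct and is the standard derivation: solve the two linear equations $f(\alpha+J\beta)=F_1+JF_2$, $f(\alpha+K\beta)=F_1+KF_2$ for $F_1,F_2$ by subtracting, then substitute into $f(x)=F_1+IF_2$; the rewriting via $1=(J-K)(J-K)^{-1}$ and the specialisation $K=-J$ are handled cleanly. The paper does not actually give a proof of this theorem in its main body---it is quoted as a preliminary result from \cite{ghiloniperotti}---but the computation you wrote is exactly the one the author carries out later for the analogous representation formula for slice differential forms, so your approach coincides with the paper's.
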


Representation formulas for quaternionic slice regular functions appeared in \cite{colgensab,colgensabstru}, while the case of continuous slice functions can be found
in \cite{ghiloniperotti}.

\begin{definition}[\cite{ghiloniperotti}]
 Given a slice function $f$, we define its spherical derivative in $x\in\Omega_{D}\setminus \mathbb{R}$ as,
\begin{equation*}
\partial_{s}f(x):=\frac{1}{2}Im(x)^{-1}(f(x)-f(x^{c})).
\end{equation*}
\end{definition}

\begin{remark}\label{spherical}
We have that $\partial_{s}f=\mathcal{I}(\frac{F_{2}(z)}{Im(z)})$ on $\Omega_{D}\setminus\mathbb{R}$.
Given $x=\alpha+J\beta\in\Omega_D$, the spherical derivative is constant on the sphere 
\begin{equation*}
\mathbb{S}_{x}=\{y\in\mathbb{H}\,|\,y=\alpha +I\beta ,I\in\mathbb{S}\}.
\end{equation*}
Moreover, $\partial_{s}f=0$ if and only if $f$ is constant on $\mathbb{S}_{x}$, in other terms:
\begin{equation*}
\partial_{s}(\partial_{s}(f))=0.
\end{equation*}
If $\Omega_{D}\cap \mathbb{R}\neq\emptyset$, under some mild regularity hypothesis on $F$ (see \cite{ghiloniperotti}, Proposition 7 for more details), $\partial_{s}f$ can be 
extended continuously as a slice function on $\Omega_{D}$. In particular this is true if the stem function $F$ is of class $\mathcal{C}^{1}$.
\end{remark}

Let $D\subset \mathbb{C}$ be an open set. Given a $\mathcal{C}^{1}$ stem function $F=F_1+\sqrt{-1}F_2:D\rightarrow \mathbb{H}_\mathbb{C}$, the two functions 
\begin{equation*}
 \frac{\partial F}{\partial z},\frac{\partial F}{\partial\bar{z}}:D\rightarrow\mathbb{H}_\mathbb{C},
\end{equation*}
are stem functions too. Explicitly:
\begin{equation*}
 \frac{\partial F}{\partial z}=\frac{1}{2}\left(\frac{\partial F}{\partial \alpha}-\sqrt{-1}\frac{\partial F}{\partial \beta}\right)=
 \frac{1}{2}\left(\frac{\partial F_1}{\partial\alpha}+\frac{\partial F_2}{\partial \beta}-\sqrt{-1}\left(\frac{\partial F_1}{\partial\beta}-\frac{\partial F_2}{\partial\alpha}\right)\right),
\end{equation*}
and
\begin{equation*}
 \frac{\partial F}{\partial \bar{z}}=\frac{1}{2}\left(\frac{\partial F}{\partial \alpha}+\sqrt{-1}\frac{\partial F}{\partial \beta}\right)=
 \frac{1}{2}\left(\frac{\partial F_1}{\partial\alpha}-\frac{\partial F_2}{\partial \beta}+\sqrt{-1}\left(\frac{\partial F_1}{\partial\beta}+\frac{\partial F_2}{\partial\alpha}\right)\right).
\end{equation*}
The previous stem functions induce the continuous slice derivatives:
\begin{equation*}
 \frac{\partial f}{\partial x}:=\mathcal{I}\left(\frac{\partial F}{\partial z}\right),\quad 
 \frac{\partial f}{\partial x^c}:=\mathcal{I}\left(\frac{\partial F}{\partial \overline{z}}\right).
\end{equation*}

While the spherical derivative controls the behavior of a slice function $f$ along the 
``spherical'' directions determined by $\mathbb{S}$, the slice derivatives 
$\partial/\partial x$ and $\partial /\partial x^{c}$, give information about the behavior along the
remaining directions (i.e.: along the (semi)slices). 

 If $f=\mathcal{I}(F):\Omega_D\rightarrow \mathbb{H}$, then we denote the 
restrictions over a  slice or a  semi-slice, as
\begin{equation*}
 f_J:=f|_{D_J}:D_J\rightarrow \mathbb{H},\quad f_J^+:=f|_{D_J^+}:D_J^+\rightarrow \mathbb{H},
\end{equation*}
respectively.

The following is a rewriting of a lemma contained in \cite{ghiloniperotti3}.
\begin{lemma}[\cite{ghiloniperotti3}, Lemma 2.1]\label{lemma2.1}
Let $f\in\mathcal{S}^{1}(\Omega_{D})$ and let $J\in\mathbb{S}$. Then, for each 
$x=\alpha+J\beta\in\Omega_D$, it holds:
\begin{equation*}
\frac{\partial f}{\partial x}(\alpha+J\beta)=\frac{\partial f_{J}}{\partial z_J}(\alpha+J\beta)\quad
\mbox{and}\quad\frac{\partial f}{\partial x^{c}}(\alpha+J\beta)=\frac{\partial f_{J}}{\partial \bar z_J}(\alpha+J\beta),
\end{equation*}
where $\partial/\partial z_J:=(1/2)(\partial/\partial \alpha -J\cdot\partial/\partial \beta)$
and $\partial/\partial \bar z_J:=(1/2)(\partial/\partial \alpha +J\cdot\partial/\partial \beta)$.
Furthermore, if $f\in\mathcal{S}^{\infty}(\Omega_{D})$ and $n\in\mathbb{N}$, then
\begin{equation*}
\frac{\partial^{n} f}{\partial x^{n}}(\alpha+J\beta)=\frac{\partial^{n} f_{J}}{\partial z^{n}_J}(\alpha+J\beta)
\end{equation*}
\end{lemma}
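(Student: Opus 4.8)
The plan is to unwind both sides of the claimed identities in terms of the inducing stem function $F=F_1+\sqrt{-1}F_2$ and the real coordinates $(\alpha,\beta)$, and then to observe that the relation $J^2=-1$ forces the slice operator $\partial/\partial z_J$ to behave exactly like the ordinary Wirtinger operator $\partial/\partial z$ once $J$ has been frozen as ``the'' imaginary unit.

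First I would record that, by the very definition of $f=\mathcal{I}(F)$, the restriction to the slice reads $f_J(\alpha+J\beta)=F_1(\alpha+i\beta)+JF_2(\alpha+i\beta)$. Since the identification $\mathbb{C}_J\ni\alpha+J\beta\leftrightarrow\alpha+i\beta\in\mathbb{C}$ is a real-linear isomorphism and $F$ is of class $\mathcal{C}^1$, the chain rule applies and gives $\partial f_J/\partial\alpha=\partial_\alpha F_1+J\,\partial_\alpha F_2$ and $\partial f_J/\partial\beta=\partial_\beta F_1+J\,\partial_\beta F_2$, where the partials of $F_1,F_2$ are understood to be evaluated at $\alpha+i\beta$. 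Substituting these into $\partial f_J/\partial z_J=\tfrac12(\partial_\alpha-J\partial_\beta)f_J$ and using $J^2=-1$ collapses the expression to $\tfrac12(\partial_\alpha F_1+\partial_\beta F_2)+J\cdot\tfrac12(\partial_\alpha F_2-\partial_\beta F_1)$. On the other hand $\partial f/\partial x=\mathcal{I}(\partial F/\partial z)$, and reading off the two quaternionic components of $\partial F/\partial z$ from the explicit formula recalled just above the statement yields precisely $\tfrac12(\partial_\alpha F_1+\partial_\beta F_2)$ for the $F_1$-part and $\tfrac12(\partial_\alpha F_2-\partial_\beta F_1)$ for the $F_2$-part; hence evaluating $\mathcal{I}(\partial F/\partial z)$ at $\alpha+J\beta$ produces the same quaternion, and the first identity follows. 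The identity for $\partial/\partial x^c$ against $\partial/\partial\bar z_J$ is the identical computation with the sign of $\partial_\beta$ reversed, matched against the displayed formula for $\partial F/\partial\bar z$.

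For the higher-order statement I would induct on $n$, the case $n=1$ being what was just established. If $f\in\mathcal{S}^\infty(\Omega_D)$ then, iterating the definition $\partial/\partial x=\mathcal{I}(\partial/\partial z)$ and using that $\partial F/\partial z$ is again a $\mathcal{C}^\infty$ stem function, one has $g:=\partial^{\,n-1}f/\partial x^{\,n-1}=\mathcal{I}(\partial^{\,n-1}F/\partial z^{\,n-1})\in\mathcal{S}^\infty(\Omega_D)\subseteq\mathcal{S}^1(\Omega_D)$. The inductive hypothesis, being valid at \emph{every} point of $\Omega_D$ for this fixed $J$, says that $g_J=\partial^{\,n-1}f_J/\partial z_J^{\,n-1}$ as functions on $D_J$, which can therefore be differentiated once more. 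Applying the already-proved first-order case to $g$ then gives $\partial^{\,n}f/\partial x^{\,n}(\alpha+J\beta)=\partial g/\partial x(\alpha+J\beta)=\partial g_J/\partial z_J(\alpha+J\beta)=\partial^{\,n}f_J/\partial z_J^{\,n}(\alpha+J\beta)$, closing the induction.

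The computation is elementary; the points deserving attention are (i) justifying that differentiation along $\mathbb{C}_J$ in the coordinates $(\alpha,\beta)$ genuinely reproduces the partials of the stem components $F_1,F_2$ — this is where the $\mathcal{C}^1$ (resp. $\mathcal{C}^\infty$) regularity is used and where one must keep track of the isomorphism $\mathbb{C}_J\cong\mathbb{C}$ — and (ii) in the inductive step, reading the first-order identity as an equality of functions on $D_J$ rather than a pointwise statement at a single $x$, so that a further differentiation is legitimate, together with the remark that passing to the slice derivative preserves the regularity class. I do not expect any further obstacle.
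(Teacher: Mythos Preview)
Your proof is correct. Note that the paper does not supply its own proof of this lemma: it is quoted verbatim as \cite{ghiloniperotti3}, Lemma~2.1, and only commented on afterwards. Your direct computation---writing $f_J=F_1+JF_2$, expanding $\tfrac12(\partial_\alpha-J\partial_\beta)f_J$ using $J^2=-1$, and matching the result against the explicit components of $\partial F/\partial z$ displayed in the paper---is exactly the natural argument, and the inductive step is handled cleanly by recognising that the first-order identity holds as an equality of functions on $D_J$, so that $g=\partial^{\,n-1}f/\partial x^{\,n-1}$ restricts to $\partial^{\,n-1}f_J/\partial z_J^{\,n-1}$ and the base case can be reapplied.
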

So, the slice derivatives at a certain point $x=\alpha+J\beta$ of a slice function $f$ 
can be computed by restricting the function to the proper semi-slice (in this case to 
$\mathbb{C}_J$), and then deriving with respect to $\partial/\partial z$ or 
$\partial/\partial\bar{z}$.

Now, left multiplication by $\sqrt{-1}$ defines a complex structure on $\mathbb{H}_{\mathbb{C}}$ and, with respect to this structure, a $C^{1}$ stem function
\begin{equation*}
F=F_{1}+\sqrt{-1}F_{2}:D\rightarrow\mathbb{H}_{\mathbb{C}}
\end{equation*}
is holomorphic if and only if, 
% satisfy the Cauchy-Riemann equations
% \begin{equation*}
% \frac{\partial F_{1}}{\partial \alpha}=\frac{\partial F_{2}}{\partial\beta}\, ,\,\frac{\partial F_{2}}{\partial\beta}=-\frac{\partial F_{2}}{\partial \alpha}\, ,\, z=\alpha +i\beta \in D
% \end{equation*}
% or equivalently if
\begin{equation*}
\frac{\partial F}{\partial \overline{z}}\equiv 0.
\end{equation*}
We are now in position to define slice regular functions.

\begin{definition}[\cite{ghiloniperotti}]
 A slice function $f\in\mathcal{S}^1(\Omega_D)$ is called \textit{slice regular} if the following equation holds:
 \begin{equation*}
  \frac{\partial f}{\partial x^c}(\alpha+J\beta)=0,\quad \forall \alpha+J\beta\in\Omega_D.
 \end{equation*}
We denote by $\mathcal{SR}(\Omega_D)$ the real vector space of all slice regular functions on $\Omega_D$.
\end{definition}
\begin{remark}
Originally, slice regular functions were defined as functions 
$f:\Omega_{D}\subseteq\mathbb{H}\rightarrow\mathbb{H}$ such that,
for any $I\in\mathbb{S}$, the restriction $f_I$ has continuous partial derivatives and
$\partial f_{I}/\partial \bar {z}$
vanishes identically (cf. \cite{genstostru}, Definition 1.1). Anyway, if this definition implies \textit{sliceness} when 
$D\cap\mathbb{R}\neq\emptyset$, this is no more true in the general case. 
Furthermore, in \cite{ghiloniperotti2} it is shown that the class of quaternionic
functions which are holomorphic if restricted to any complex line $\mathbb{C}_{I}$
and are not \textit{slice} is too big and hence not very manageable.

\end{remark}

\textit{A slice regular function is, then, a slice function induced by a holomorphic stem function.} The next theorem gives a characterization of 
slice regular functions.

\begin{proposition}[\cite{ghiloniperotti}, Proposition 8 and Remark 6]%\label{charareg}
Let $f=\mathcal{I}(F)\in\mathcal{S}^{1}(\Omega_{D})$, then the following facts are equivalents:
\begin{itemize}
 \item $f\in\mathcal{SR}(\Omega_D)$;
 \item the restriction $f_{J}$ is holomorphic for every $J\in\mathbb{S}$ with respect to the complex structures on $D_{J}$ and $\mathbb{H}$ defined by left multiplication by $J$;
 \item two restrictions $f_{J}^+$, $f_{K}^+$ ($J\neq K$) are holomorphic on $D_{J}^{+}$ and $D_{K}^{+}$ respectively (the possibility $K=-J$ is not excluded).
\end{itemize}
\end{proposition}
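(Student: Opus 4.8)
The plan is to prove the cyclic chain of implications $(1)\Rightarrow(2)\Rightarrow(3)\Rightarrow(1)$, labelling the three listed conditions $(1)$, $(2)$, $(3)$ in the order stated: $(1)$ is slice regularity of $f$, $(2)$ is holomorphy of $f_J$ for every $J\in\mathbb{S}$, and $(3)$ is holomorphy of two restrictions $f_J^+$, $f_K^+$ with $J\neq K$. The bridge used repeatedly is the elementary dictionary: with respect to the complex structures induced by left multiplication by $J$, a $\mathcal{C}^1$ map $g$ defined on $D_J$ (or on $D_J^+$) with values in $\mathbb{H}$ is holomorphic if and only if $\partial g/\partial\bar z_J\equiv 0$, where $\partial/\partial\bar z_J=\tfrac12(\partial/\partial\alpha+J\cdot\partial/\partial\beta)$; indeed holomorphy means that $dg$ intertwines the two complex structures, which amounts to $\partial g/\partial\beta=J\,\partial g/\partial\alpha$, and this is exactly $\partial g/\partial\bar z_J=0$.

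For $(1)\Rightarrow(2)$: if $f\in\mathcal{SR}(\Omega_D)$ then $\partial f/\partial x^c\equiv 0$ by definition, so Lemma \ref{lemma2.1} yields $\partial f_J/\partial\bar z_J(\alpha+J\beta)=\partial f/\partial x^c(\alpha+J\beta)=0$ for every $J\in\mathbb{S}$ and every $\alpha+J\beta\in D_J$; by the dictionary, $f_J$ is holomorphic. The implication $(2)\Rightarrow(3)$ is immediate: choose any two distinct $J,K\in\mathbb{S}$ and restrict the holomorphic maps $f_J$, $f_K$ to the subsets $D_J^+\subset D_J$ and $D_K^+\subset D_K$.

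The real content is $(3)\Rightarrow(1)$, and the guiding idea is that, by the representation formula (Theorem \ref{representationtheorem}), $f$ is entirely determined by any two semislices, so holomorphy on two of them should suffice. Concretely, write $f=\mathcal{I}(F)$ with $F=F_1+\sqrt{-1}F_2$ a $\mathcal{C}^1$ stem function and put, for $z=\alpha+i\beta\in D$,
\[
A(z):=\frac{\partial F_1}{\partial\alpha}-\frac{\partial F_2}{\partial\beta},\qquad
B(z):=\frac{\partial F_1}{\partial\beta}+\frac{\partial F_2}{\partial\alpha},
\]
so that, by the explicit expression for $\partial F/\partial\bar z$ recalled earlier, $\partial F/\partial\bar z=\tfrac12(A+\sqrt{-1}B)$, and $f$ is slice regular exactly when $A\equiv B\equiv 0$ on $D$. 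Differentiating $f_J^+(\alpha+J\beta)=F_1(\alpha+i\beta)+J\,F_2(\alpha+i\beta)$ in the real parameters $\alpha,\beta$ and using $J^2=-1$ gives the identity
\[
\frac{\partial f_J^+}{\partial\bar z_J}(\alpha+J\beta)=\tfrac12\bigl(A(\alpha+i\beta)+J\,B(\alpha+i\beta)\bigr),
\]
together with its analogue for $K$; note that $A$ and $B$ do not depend on the imaginary unit. Holomorphy of $f_J^+$ and of $f_K^+$ therefore forces $A+JB=0$ and $A+KB=0$ on $D^+:=\{z\in D:\operatorname{Im}z>0\}$; subtracting gives $(J-K)B=0$, and since $J\neq K$ and $\mathbb{H}$ is a division algebra we get $B\equiv 0$, hence $A\equiv 0$, on $D^+$. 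Finally, the stem relations $F_1(\bar z)=F_1(z)$, $F_2(\bar z)=-F_2(z)$ of Definition \ref{stem} propagate $A\equiv B\equiv 0$ to $\{\operatorname{Im}z<0\}$, and continuity of the first derivatives of $F_1,F_2$ (available since $f\in\mathcal{S}^1(\Omega_D)$) extends it across the real axis; thus $\partial F/\partial\bar z\equiv 0$ and $f\in\mathcal{SR}(\Omega_D)$.

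The step I expect to require the most care is the passage around the real axis at the end of $(3)\Rightarrow(1)$: because $D_J^+$ carries a boundary where $\operatorname{Im}z=0$, holomorphy there is to be read on the open part $\operatorname{Im}z>0$, and recovering the vanishing of $\partial F/\partial\bar z$ on all of $D$ genuinely uses both the $\mathcal{C}^1$ hypothesis on the stem function and the evenness/oddness built into Definition \ref{stem}. Everything else is routine: the translation between the operators $\partial/\partial\bar z_J$ and the Cauchy--Riemann system for $F$, and the one-line division-algebra cancellation.
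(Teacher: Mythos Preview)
The paper does not supply its own proof of this proposition; it is quoted as a preliminary result from \cite{ghiloniperotti} (Proposition 8 and Remark 6) and left unproved in the present paper. Your argument is correct and is essentially the standard one: the key computation $\partial f_J^+/\partial\bar z_J=\tfrac12(A+JB)$ with $A,B$ depending only on $z$, followed by the division-algebra cancellation $(J-K)B=0$, is exactly the right mechanism, and your handling of the extension from $D^+$ to all of $D$ via the parity of $A$ (even) and $B$ (odd) and continuity across the real axis is the point that genuinely needs the $\mathcal{C}^1$ hypothesis, as you note.
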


Lemma 
\ref{lemma2.1}, implies that, if the set $D$ has nonempty
intersection with the real line, then $f$ is slice regular on $\Omega_D$ if and only if it is Cullen regular in the sense introduced by Gentili and Struppa in 
\cite{gentilistruppa1,gentilistruppa}. 

We recall that any slice regular function restricted to a slice admits a splitting into two complex holomorphic function
as the following lemma claims. A proof of this result can be found in \cite{colgensabstru} or in \cite{ghiloniperotti3}. 
In \cite{colgensabstru} the result is proven with the additional hypothesis that 
the domain of definition intersects the real axis. 
\begin{lemma}[\cite{colgensabstru}, Lemma 2.3]\label{splitting}
 Let $f\in\mathcal{SR}(\Omega_D)$ and $J\bot K$ two elements of $\mathbb{S}$. Then there exist two holomorphic functions
 $f_1,f_2:D_J\rightarrow \mathbb{C}_J$ such that 
 \begin{equation*}
  f_J=f_1+f_2K.
 \end{equation*}
\end{lemma}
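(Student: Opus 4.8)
The plan is to reduce the statement to the elementary fact that $\mathbb{H}$, viewed as a left vector space over $\mathbb{C}_J$, is two-dimensional with basis $\{1,K\}$, and that this decomposition is compatible with the complex structures relevant to slice regularity. Since $J\bot K$, the set $\{1,J,K,JK\}$ is an $\mathbb{R}$-basis of $\mathbb{H}$, so every quaternion splits uniquely as a sum of an element of $\mathbb{C}_J=\mathrm{span}_{\mathbb{R}}\{1,J\}$ and an element of $\mathbb{C}_J K=\mathrm{span}_{\mathbb{R}}\{K,JK\}$. Applying this decomposition pointwise to $f_J$ defines functions $f_1,f_2\colon D_J\to\mathbb{C}_J$ with $f_J=f_1+f_2 K$; the content of the lemma is then the holomorphy of $f_1$ and $f_2$.

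First I would invoke the characterization stated above (\cite{ghiloniperotti}, Proposition 8 and Remark 6): since $f\in\mathcal{SR}(\Omega_D)$, the restriction $f_J$ is holomorphic with respect to the complex structures on $D_J\subset\mathbb{C}_J$ and on $\mathbb{H}$ given by left multiplication by $J$. Writing, as in Lemma \ref{lemma2.1}, the corresponding Cauchy--Riemann operator as $\partial/\partial\bar z_J=\tfrac12(\partial/\partial\alpha+J\cdot\partial/\partial\beta)$, this holomorphy amounts to $\partial f_J/\partial\bar z_J\equiv 0$ on $D_J$.

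Next I would observe that left multiplication by $J$ preserves each of the subspaces $\mathbb{C}_J$ and $\mathbb{C}_J K$: for $a\in\mathbb{C}_J$ one has $Ja\in\mathbb{C}_J$ and $J(aK)=(Ja)K\in\mathbb{C}_J K$, using only that $\mathbb{C}_J$ is commutative and that the factor $K$ sits on the \emph{right}. Consequently the operator $\partial/\partial\bar z_J$ acts componentwise:
\[
\frac{\partial f_J}{\partial\bar z_J}=\frac12\Big(\frac{\partial f_1}{\partial\alpha}+J\frac{\partial f_1}{\partial\beta}\Big)+\frac12\Big(\frac{\partial f_2}{\partial\alpha}+J\frac{\partial f_2}{\partial\beta}\Big)K=\frac{\partial f_1}{\partial\bar z_J}+\frac{\partial f_2}{\partial\bar z_J}\,K ,
\]
with the two summands lying in the complementary subspaces $\mathbb{C}_J$ and $\mathbb{C}_J K$. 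Hence $\partial f_J/\partial\bar z_J\equiv 0$ forces $\partial f_1/\partial\bar z_J\equiv 0$ and $\partial f_2/\partial\bar z_J\equiv 0$, and, under the standard identification $\mathbb{C}_J\cong\mathbb{C}$, this is exactly holomorphy of $f_1$ and of $f_2$ on $D_J$.

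Nothing in this argument uses $\Omega_D\cap\mathbb{R}\neq\emptyset$: when $D\cap\mathbb{R}=\emptyset$ the slice $D_J$ is just the disjoint union of the two semislices $D_J^+$ and $D_J^-$, and one runs the computation on each connected component. The only delicate point — and the step I would flag as the main obstacle — is the non-commutative bookkeeping: one must keep the ``second coordinate'' multiplied by $K$ strictly on the right, because $J$ and $K$ anticommute, so that $J$ really acts as a $\mathbb{C}_J$-linear complex structure on the $\mathbb{C}_J K$ factor; once the right-module conventions are fixed consistently, the rest is linear algebra.
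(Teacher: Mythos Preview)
Your argument is correct and is essentially the standard proof of the splitting lemma as it appears in the references \cite{colgensabstru} and \cite{ghiloniperotti3}. Note that the present paper does not give its own proof of this lemma but merely cites it; your decomposition $\mathbb{H}=\mathbb{C}_J\oplus\mathbb{C}_J K$ together with the observation that left multiplication by $J$ preserves both summands (so that $\partial/\partial\bar z_J$ acts componentwise) is exactly the mechanism used in those sources.
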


\subsection{Slice product and zeros of slice functions}

The pointwise product of two slice functions is not, in general, a slice 
function\footnote{Take, for instance, $a\in\mathbb{H}\setminus\mathbb{R}$, $f(x)=xa$ and $g(x)=x$. 
Then, the lack of commutativity implies that, $h(x)=f(x)g(x)=xax$ is not a slice function.} 
but, if one considers the function induced by the pointwise
product of the two stem functions, then the result is a slice function and also regularity is preserved. 
So, to be more precise, we give the following definition.
\begin{definition}[\cite{ghiloniperotti}]
 Let $f=\mathcal{I}(F)$, $g=\mathcal{I}(G)$ be two slice functions on $\Omega_D$. The \textit{slice product} of $f$ and $g$ is the slice function defined by
 \begin{equation*}
  f\cdot g:=\mathcal{I}(FG)=\mathcal{I}(F_1G_1-F_2G_2+\sqrt{-1}(F_1G_2+F_2G_1)).
 \end{equation*}
\end{definition}

\begin{remark}
 In the previous definition, if the components of the first stem function $F=F_1+\sqrt{-1}F_2$ are real valued, then $(f\cdot g)(x)=f(x)g(x)$ for each $x\in\Omega_D$.
\end{remark}

\begin{definition}[\cite{ghiloniperotti}]
 A slice function $f=\mathcal{I}(F)$ is called \textit{real}, if the two components $F_1$ and $F_2$ are real-valued.
\end{definition}
The next proposition says that this notion of product is the good one,
meaning that it preserves regularity.
\begin{proposition}[\cite{ghiloniperotti}, Proposition 11]
 If $f,g\in\mathcal{SR}(\Omega_D)$, then $f\cdot g\in\mathcal{SR}(\Omega_D)$.
\end{proposition}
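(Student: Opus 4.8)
The plan is to work entirely at the level of stem functions, using the characterisation recalled above that a slice function is slice regular exactly when it is induced by a \emph{holomorphic} stem function. Write $f=\mathcal{I}(F)$ and $g=\mathcal{I}(G)$ with $F=F_1+\sqrt{-1}F_2$ and $G=G_1+\sqrt{-1}G_2:D\to\mathbb{H}_{\mathbb{C}}$ holomorphic stem functions. By the very definition of the slice product, $f\cdot g=\mathcal{I}(FG)$, where $FG$ is the pointwise product in $\mathbb{H}_{\mathbb{C}}$; so the whole statement reduces to checking that $FG$ is again a holomorphic stem function on $D$.

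First I would check that $FG$ is a stem function. Its quaternionic components $F_1G_1-F_2G_2$ and $F_1G_2+F_2G_1$ are of class $\mathcal{C}^1$ because $F_1,F_2,G_1,G_2$ are, and the complex-intrinsic condition reduces to the elementary parity check that $F_1G_1-F_2G_2$ is even and $F_1G_2+F_2G_1$ is odd with respect to the imaginary part of $z$; this is immediate from the evenness of $F_1,G_1$ and the oddness of $F_2,G_2$. (Equivalently: $(FG)(\bar z)=F(\bar z)G(\bar z)=\overline{F(z)}\,\overline{G(z)}=\overline{F(z)G(z)}=\overline{(FG)(z)}$, the third equality holding because the conjugation $\overline{(\,\cdot\,)}$ distributes over the product of $\mathbb{H}_{\mathbb{C}}$, as it acts trivially on the quaternionic factor of $\mathbb{H}\otimes_{\mathbb{R}}\mathbb{C}$.)

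Next I would show that $FG$ is holomorphic, i.e.\ $\partial(FG)/\partial\bar z\equiv0$. The operator $\partial/\partial\bar z=\tfrac12\bigl(\partial/\partial\alpha+\sqrt{-1}\,\partial/\partial\beta\bigr)$ is a complex-linear combination of the real derivations $\partial/\partial\alpha$ and $\partial/\partial\beta$, and the product of $\mathbb{H}_{\mathbb{C}}$ is $\mathbb{R}$-bilinear; hence the Leibniz rule applies and gives
\begin{equation*}
\frac{\partial(FG)}{\partial\bar z}=\frac{\partial F}{\partial\bar z}\,G+F\,\frac{\partial G}{\partial\bar z}.
\end{equation*}
Since $f$ and $g$ are slice regular, $F$ and $G$ are holomorphic, so both terms on the right vanish identically. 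Therefore $FG$ is a holomorphic stem function and $f\cdot g=\mathcal{I}(FG)\in\mathcal{SR}(\Omega_D)$.

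I do not expect a genuine obstacle here: the assertion is essentially ``holomorphy is preserved under products'' transported through the correspondence $f\leftrightarrow F$, and the only point requiring minor care is the validity of the Leibniz rule — which needs only $\mathbb{R}$-bilinearity of the multiplication, not associativity or alternativity, hence is harmless. A fully computational alternative avoids even this remark: substitute the explicit components of $FG$ into the Cauchy-Riemann system and use the Cauchy-Riemann equations satisfied separately by $(F_1,F_2)$ and by $(G_1,G_2)$; the required cancellations are routine.
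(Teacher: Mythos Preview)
Your proof is correct. The paper does not give its own proof of this proposition but merely cites it from \cite{ghiloniperotti}; your argument via the Leibniz rule at the stem-function level is precisely the standard one and is what the cited reference does.
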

In \cite{ghiloniperotti} it is also pointed out that the regular product introduced in \cite{colgensabstru,gentilistoppato2}
is generalized by this one if the domain $\Omega_D$ does not have real points.
In the next proposition we explicit the slice product as the pointwise product
with the proper evaluations. This proposition was proved for regular functions defined on domains that intersect the real 
axis in \cite{colgensabstru,gensto,gentilistoppato2} and in \cite{altavilla} in this general
setting.
\begin{proposition}[\cite{altavilla}, Proposition 4.8]\label{prodcomp}
Let $f,g\in\mathcal{SR}(\Omega_{D})$ then, for any $x\in\Omega_{D}\setminus V(f)$
\begin{equation*}
(f\cdot g)(x)=f(x)g(f(x)^{-1}xf(x)).
\end{equation*}
\end{proposition}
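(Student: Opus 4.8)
The plan is to evaluate both sides explicitly through the stem functions inducing $f$ and $g$. I would fix $x=\alpha+I\beta\in\Omega_D\setminus V(f)$, set $z=\alpha+i\beta\in\mathbb{C}$, and write $F=F_1+\sqrt{-1}F_2$, $G=G_1+\sqrt{-1}G_2$ for the stem functions with $f=\mathcal{I}(F)$, $g=\mathcal{I}(G)$, so that $f(x)=F_1(z)+IF_2(z)$ and $g(x)=G_1(z)+IG_2(z)$. By the definition of the slice product,
\begin{equation*}
(f\cdot g)(x)=\mathcal{I}(FG)(x)=F_1(z)G_1(z)-F_2(z)G_2(z)+I\left(F_1(z)G_2(z)+F_2(z)G_1(z)\right),
\end{equation*}
so the task reduces to bringing the right-hand side $f(x)\,g(f(x)^{-1}xf(x))$ into this same form.

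The observation that makes this work is that conjugation by the nonzero quaternion $p:=f(x)$ keeps $x$ inside its own sphere $\mathbb{S}_x$: since $q\mapsto p^{-1}qp$ is an $\mathbb{R}$-algebra automorphism of $\mathbb{H}$, the element $I':=p^{-1}Ip$ satisfies $(I')^2=p^{-1}I^2p=-1$, i.e.\ $I'\in\mathbb{S}$, and hence $f(x)^{-1}xf(x)=p^{-1}(\alpha+I\beta)p=\alpha+I'\beta$, which lies in $\Omega_D$ because $\Omega_D$ is circular. Since $g=\mathcal{I}(G)$ is a slice function, its value at this point is obtained by keeping the complex parameter $z$ fixed and only changing the imaginary unit, namely $g(f(x)^{-1}xf(x))=G_1(z)+I'G_2(z)$.

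The last step is then the expansion of $f(x)\,g(f(x)^{-1}xf(x))=(F_1(z)+IF_2(z))(G_1(z)+I'G_2(z))$. Here the one identity worth isolating is that, since $p=F_1(z)+IF_2(z)$, one has $(F_1(z)+IF_2(z))p^{-1}=1$, hence $(F_1(z)+IF_2(z))\,I'=(F_1(z)+IF_2(z))p^{-1}Ip=Ip=I\left(F_1(z)+IF_2(z)\right)$, and therefore $(F_1(z)+IF_2(z))\,I'=IF_1(z)-F_2(z)$ by associativity of $\mathbb{H}$ and $I^2=-1$. Substituting and collecting terms,
\begin{multline*}
f(x)\,g(f(x)^{-1}xf(x))=(F_1(z)+IF_2(z))G_1(z)+(IF_1(z)-F_2(z))G_2(z)\\
=F_1(z)G_1(z)-F_2(z)G_2(z)+I\left(F_2(z)G_1(z)+F_1(z)G_2(z)\right),
\end{multline*}
which is exactly $(f\cdot g)(x)$. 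The computation is uniform in $\beta$ — when $\beta=0$ the complex-intrinsic condition on $F$ and $G$ forces $F_2(z)=G_2(z)=0$ and both sides collapse to $F_1(\alpha)G_1(\alpha)$ — so no separate case is needed. I do not expect a real obstacle here: the only mildly delicate points are the noncommutativity of $\mathbb{H}$, which is absorbed into the identity $(F_1+IF_2)I'=I(F_1+IF_2)$, and the fact that $I'$ is again an imaginary unit, which is precisely why conjugating the argument of $g$ does not move it off the sphere $\mathbb{S}_x$.
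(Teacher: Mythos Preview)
Your proof is correct and is the standard argument; the paper itself does not supply a proof but cites the result from \cite{altavilla}. One small observation: your computation uses only that $f$ and $g$ are slice functions and that conjugation by a nonzero quaternion preserves $\mathbb{S}$, so slice regularity is in fact not needed for the identity to hold.
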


Some notion about the zero set of a slice regular function will be useful in the next section. 
We will quote, then, the main results known in the literature.

If $F$ is a stem function, then $F^c$ is a stem function as well. We will denote by $f^c$ the slice function induced by $F^c$. The next definition given in
\cite{ghiloniperotti} generalizes the one given in \cite{gentilistoppato2} for power series. 
\begin{definition}[\cite{ghiloniperotti}]
 Let $f$ be a slice function over $\Omega_D$. Then we define the \textit{normal function} of $f$ (or \textit{symmetrization} of $f$) as the slice function 
 $N(f):=f\cdot f^c\in\mathcal{S}(\Omega_D)$.
\end{definition}
\begin{remark}
Let $f$ be a slice function. The following facts are contained in \cite{ghiloniperotti}, Section 6.
 \begin{itemize}
  \item If $f$ is a slice regular function, then also $f^c$ and $N(f)$ are slice regular functions.
  \item The following equation holds true:
  \begin{equation*}
   (f\cdot g)^c=g^c\cdot f^c,\quad \mbox{and so}\quad N(f)=N(f)^c.
  \end{equation*}
  Moreover, $N(f^c)= N(f)$.
  \item The next equality holds true:
  \begin{equation*}
   N(f\cdot g)=N(f)N(g).
  \end{equation*}
 \end{itemize}
\end{remark}

Let now $V(f)$ be the zero set of $f:\Omega_D\rightarrow\mathbb{H}$:
\begin{equation*}
 V(f):=\{x\in\Omega_D\,|\,f(x)=0\}.
\end{equation*}
The next proposition is a consequence of the ``affine behavior'' of slice regular
functions with respect to imaginary units.

\begin{proposition}[\cite{ghiloniperotti}, Proposition 16]
 Let $f\in\mathcal{S}(\Omega_D)$, then, for any $x\in\Omega_D\setminus\mathbb{R}$, the restriction of $f$ to 
 $\mathbb{S}_x$ is injective or constant.
\end{proposition}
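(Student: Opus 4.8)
The plan is to work slicewise using the stem-function description $f=\mathcal{I}(F)$ with $F=F_1+\sqrt{-1}F_2$. Fix $x=\alpha+J\beta\in\Omega_D\setminus\mathbb{R}$, so that $\beta\neq 0$, and let $z=\alpha+i\beta\in D$. The values of $f$ on the sphere $\mathbb{S}_x=\{\alpha+I\beta\mid I\in\mathbb{S}\}$ are exactly $f(\alpha+I\beta)=F_1(z)+IF_2(z)$ as $I$ ranges over $\mathbb{S}$; note that $F_1(z)$ and $F_2(z)$ are fixed quaternions, independent of $I$, because $\mathbb{S}_x$ corresponds to the single point $z\in D$. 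Thus the restriction $f|_{\mathbb{S}_x}$ is the map $\varphi\colon\mathbb{S}\to\mathbb{H}$, $\varphi(I)=F_1(z)+IF_2(z)$, an $\mathbb{H}$-left-affine function of $I$.

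The key step is the dichotomy for such affine maps. If $F_2(z)=0$, then $\varphi$ is identically equal to $F_1(z)$, so $f$ is constant on $\mathbb{S}_x$. If $F_2(z)\neq 0$, I claim $\varphi$ is injective: suppose $F_1(z)+I_1F_2(z)=F_1(z)+I_2F_2(z)$ for $I_1,I_2\in\mathbb{S}$; subtracting gives $(I_1-I_2)F_2(z)=0$, and since $F_2(z)$ is a nonzero quaternion it is invertible in $\mathbb{H}$, whence $I_1=I_2$. This is the heart of the argument, and it is short precisely because left multiplication is being used; the only thing to be careful about is that we really are looking at $f$ restricted to a single sphere, so that the ``coefficients'' $F_1(z),F_2(z)$ do not vary. (Equivalently, one can phrase the dichotomy via the spherical derivative of Remark~\ref{spherical}: $\partial_s f(x)=\mathcal{I}(F_2(z)/\mathrm{Im}(z))$ vanishes at $x$ exactly when $F_2(z)=0$, i.e. exactly when $f$ is constant on $\mathbb{S}_x$, and otherwise $f|_{\mathbb{S}_x}$ is injective.)

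I do not foresee a genuine obstacle here; the statement is essentially an unpacking of the definition of slice function. The one point that deserves a sentence of care is well-definedness of $\varphi$ on the antipodal identification $\alpha+I\beta=\alpha+(-I)(-\beta)$, but this is exactly the consistency already recorded after the definition of slice function (using that $F_1$ is even and $F_2$ is odd in the imaginary part of $z$), so it causes no trouble once we have fixed the representative $z=\alpha+i\beta$ with $\beta\neq 0$. Finally note the hypothesis is only $f\in\mathcal{S}(\Omega_D)$ — no regularity is needed — which matches the fact that the proof uses nothing beyond the affine form of $f$ on $\mathbb{S}_x$.
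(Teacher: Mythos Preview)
Your proof is correct and is exactly the standard argument: the paper itself does not reproduce a proof here (it merely cites \cite{ghiloniperotti}, Proposition 16, and remarks that the result follows from the ``affine behavior'' of slice functions with respect to the imaginary unit), and your argument is precisely the unpacking of that affine structure that the reference carries out. There is nothing to add.
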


A structure result for $V(f)$ is showed in the next theorem.
\begin{theorem}[\cite{ghiloniperotti}, Theorem 17]
 Let $f\in\mathcal{S}(\Omega_D)$ and let $x=\alpha+J\beta\in\Omega_D$. Then one of the following mutually exclusive statements holds:
 \begin{enumerate}
  \item $\mathbb{S}_x\cap V(f)=\emptyset$.
  \item $\mathbb{S}_x\subset V(f)$. In this case $x$ is called a real or spherical zero of $f$ if, respectively, $x\in\mathbb{R}$ or $x\notin \mathbb{R}$.
  \item $\mathbb{S}_x\cap V(f)=\{y\}$, with $y\notin \mathbb{R}$. In this case $x$ is called an $\mathbb{S}$-isolated non-real zero of $f$. 
 \end{enumerate}
\end{theorem}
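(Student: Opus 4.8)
The plan is to reduce the classification of $\mathbb{S}_x \cap V(f)$ to a statement about the stem function. Writing $f = \mathcal{I}(F)$ with $F = F_1 + \sqrt{-1}F_2$ on $D$, the condition $f(\alpha + I\beta) = 0$ reads $F_1(\alpha+i\beta) + I F_2(\alpha+i\beta) = 0$, where $(\alpha,\beta)$ is fixed (with $z = \alpha + i\beta \in D$) and $I$ ranges over $\mathbb{S}$. So the whole theorem becomes: for fixed $a := F_1(z)$, $b := F_2(z) \in \mathbb{H}$, classify the set $\{I \in \mathbb{S} : a + Ib = 0\}$, and then translate the three cases back. This is purely an algebraic question in $\mathbb{H}$, and it is the natural place to start.

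First I would handle the real case $\beta = 0$ (i.e.\ $x \in \mathbb{R}$, if $D \cap \mathbb{R} \neq \emptyset$): then $f(x) = F_1(\alpha)$ is a single value and $\mathbb{S}_x = \{\alpha\}$ is a point, so either $f(x) \neq 0$ (case (1)) or $f(x) = 0$, which is case (2) with $x$ real. For $\beta \neq 0$ I would argue on the number of solutions $I$ of $a + Ib = 0$. If $b = 0$: the equation becomes $a = 0$, so either $a \neq 0$ and there are no solutions (case (1)), or $a = 0$ and every $I \in \mathbb{S}$ is a solution, i.e.\ $\mathbb{S}_x \subseteq V(f)$ (case (2), spherical zero). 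If $b \neq 0$: then $a + Ib = 0$ forces $I = -ab^{-1}$, a single quaternion; this lies in $\mathbb{S}$ (equivalently $I^2 = -1$) precisely when $ab^{-1} \in \mathbb{S}$, and one checks $y = \alpha + (-ab^{-1})\beta$ is non-real because $\beta \neq 0$ and $-ab^{-1} \in \mathbb{S}$. So in this subcase either there is no solution in $\mathbb{S}$ (case (1)) or exactly one, giving the $\mathbb{S}$-isolated non-real zero $\{y\}$ of case (3). The three cases are mutually exclusive because they are distinguished by the cardinality of $\mathbb{S}_x \cap V(f)$ together with whether the zero, when unique, is real.

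The main obstacle — really the only subtle point — is making the case split on $b = F_2(z)$ versus $b \neq 0$ interact correctly with the evenness/oddness of the stem components and with the possibility $z \in \mathbb{R}$. When $\beta = 0$ one has $F_2(z) = 0$ automatically (oddness), so the "$b \neq 0$" branch simply cannot occur on $\mathbb{R}$, which is consistent with $\mathbb{S}_x$ being a single point; I would note this explicitly so that the real point case falls cleanly under (1) or (2) and never (3). One should also remark that the classification uses only that $f$ is a slice function, not slice regularity, and that it does not depend on the chosen representative $\alpha + J\beta$ of the sphere $\mathbb{S}_x$, since $F_1(z), F_2(z)$ depend only on $z$, not on $J$. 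With these observations the proof is essentially the elementary quaternionic computation above, and no appeal to the deeper machinery (spherical expansion, regularity, the representation formula) is needed.
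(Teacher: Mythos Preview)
Your proof is correct and is exactly the natural argument. The paper does not give its own proof of this statement (it is quoted as a preliminary result from \cite{ghiloniperotti}), but your reduction to the equation $a+Ib=0$ with $a=F_1(z)$, $b=F_2(z)$ is precisely the affine structure underlying the preceding cited result (Proposition 16 of \cite{ghiloniperotti}), which says $f|_{\mathbb{S}_x}$ is injective or constant; your case split $b\neq 0$ versus $b=0$ is that dichotomy, and the trichotomy follows immediately.
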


\begin{proposition}[\cite{ghiloniperotti}, Proposition 25]\label{zerosprod}
Let $f,g\in\mathcal{S}(\Omega_{D})$. Then $V(f)\subset V(f\cdot g)$. Moreover it holds:
\begin{equation*}
\bigcup_{x\in V(f\cdot g)} \mathbb{S}_{x}=\bigcup_{x\in V(f)\cup V(g)}\mathbb{S}_{x}.
\end{equation*}
\end{proposition}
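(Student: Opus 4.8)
The plan is to prove Proposition \ref{zerosprod} by reducing everything to the level of stem functions and then invoking the algebraic properties of the normal function together with the structure theorem for zero sets that has just been recalled.

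First I would establish the inclusion $V(f)\subset V(f\cdot g)$. Writing $f=\mathcal{I}(F)$ and $g=\mathcal{I}(G)$, the slice product is $f\cdot g=\mathcal{I}(FG)$, where the product $FG$ is the pointwise product in the complexified algebra $\hhc$. If $x=\alpha+J\beta\in V(f)$, then by the representation formula (Theorem \ref{representationtheorem}) applied with the splitting $F=F_1+\sqrt{-1}F_2$, the vanishing of $f$ at $x$ forces both semislice values $f(\alpha\pm J\beta)$ to be tied together; more directly, $f(x)=F_1(\alpha+i\beta)+JF_2(\alpha+i\beta)=0$. One then has to be a little careful: this does not immediately give $F_1=F_2=0$ at $\alpha+i\beta$ unless $x\notin\mathbb{R}$ or one uses the full sphere. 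The clean argument is to use Proposition \ref{prodcomp}: for $x\notin V(f)$ we have the pointwise description of $f\cdot g$, so the zero set of $f\cdot g$ outside $V(f)$ is governed by $g$ evaluated at the conjugated point, and in particular $V(f)\subset V(f\cdot g)$ is what remains once one checks the behaviour on $V(f)$ itself directly from $f\cdot g=\mathcal{I}(F_1G_1-F_2G_2+\sqrt{-1}(F_1G_2+F_2G_1))$: at a point of the real axis in $V(f)$ one has $F_1=0$ (and $F_2=0$ by oddness), so $f\cdot g$ vanishes there; at a non-real zero $\mathbb{S}$-isolated or spherical, one uses that the sphere $\mathbb{S}_x$ meets $V(f)$ and analyzes the two complex components separately.

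Next, for the equality of circularizations $\bigcup_{x\in V(f\cdot g)}\mathbb{S}_x=\bigcup_{x\in V(f)\cup V(g)}\mathbb{S}_x$, I would pass to the normal functions. The key identities recalled just above are $N(f\cdot g)=N(f)N(g)$ and, for a real slice function, the pointwise product agrees with the slice product. Since $N(f)=f\cdot f^c$ is a slice function whose defining stem function has a product structure, and since $N(f)$ is known to be real-valued — its circular zero set is precisely $\bigcup_{x\in V(f)}\mathbb{S}_x$ (this is the standard fact that $x\in V(f)$ iff $N(f)$ vanishes on the whole sphere $\mathbb{S}_x$) — one gets
\begin{equation*}
\bigcup_{x\in V(f\cdot g)}\mathbb{S}_x \;=\;\bigcup_{x\in V(N(f\cdot g))}\mathbb{S}_x\;=\;\bigcup_{x\in V(N(f)N(g))}\mathbb{S}_x\;=\;\bigcup_{x\in V(N(f))\cup V(N(g))}\mathbb{S}_x\;=\;\bigcup_{x\in V(f)\cup V(g)}\mathbb{S}_x,
\end{equation*}
where the middle equality uses that $N(f)$ and $N(g)$ are real slice functions, so their slice product is their pointwise product and hence its zero set is the union of the two zero sets; the outer equalities use the characterization of $V(N(h))$ in terms of $V(h)$. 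I would spell out the fact that $\bigcup_{x\in V(h)}\mathbb{S}_x=\bigcup_{x\in V(N(h))}\mathbb{S}_x$ as a small lemma, proved via the structure theorem (Theorem \ref{representationtheorem} combined with \cite{ghiloniperotti}, Theorem 17): on each sphere $\mathbb{S}_x$, either $h$ has no zero — then $N(h)$ has none either — or $h$ has a zero, in which case writing $h_J=h_1+h_2K$ by Lemma \ref{splitting} one checks $N(h)$ vanishes on all of $\mathbb{S}_x$.

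The main obstacle I anticipate is the bookkeeping around points of the real axis and the distinction between ``$f$ vanishes at a point'' and ``$f$ vanishes on a sphere'': the first inclusion $V(f)\subset V(f\cdot g)$ is genuinely pointwise and must be checked case-by-case (real zero, spherical zero, $\mathbb{S}$-isolated non-real zero), whereas the second statement is only about circularizations and so is cleaner once one has the normal-function dictionary. The case of an $\mathbb{S}$-isolated non-real zero $y$ of $f$ is the subtle one for the pointwise inclusion: here one invokes Proposition \ref{prodcomp} in a neighbourhood, or argues directly that $F_1(\alpha+i\beta)=-JF_2(\alpha+i\beta)$ with $J$ the imaginary unit of $y$, and then plugs this relation into the stem-function formula for $FG$ to conclude $(f\cdot g)(y)=0$. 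Everything else is a routine translation between $\Omega_D$-level statements and $D$-level statements about stem functions.
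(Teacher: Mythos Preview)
The paper does not give its own proof of this proposition; it is quoted from \cite{ghiloniperotti} without argument, so there is nothing to compare your approach against at the level of strategy. Your outline for the second assertion via normal functions is correct and is the standard route: Corollary~19 in the paper gives $V(N(h))=\bigcup_{x\in V(h)}\mathbb{S}_x$, the identity $N(f\cdot g)=N(f)N(g)$ is recalled just above, $N(f)$ and $N(g)$ are real slice functions so their slice product is the pointwise product, and $\mathbb{H}$ has no zero divisors, so the displayed chain of equalities is valid.

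There are, however, two points worth cleaning up. First, you invoke Proposition~\ref{prodcomp} and Lemma~\ref{splitting}, but both are stated in the paper only for $\mathcal{SR}(\Omega_D)$, whereas the present proposition is for arbitrary $f,g\in\mathcal{S}(\Omega_D)$; you should not lean on them here. Second, your treatment of the inclusion $V(f)\subset V(f\cdot g)$ is more complicated than necessary. The direct stem-function substitution you mention for the ``$\mathbb{S}$-isolated'' case in fact handles all cases at once: if $x=\alpha+J\beta\in V(f)$ then $F_1(z)=-JF_2(z)$ (at real points this reduces to $F_1=0$ since $F_2$ is odd), and plugging this into
\[
(f\cdot g)(x)=(F_1G_1-F_2G_2)(z)+J(F_1G_2+F_2G_1)(z)
\]
gives $-JF_2G_1-F_2G_2+F_2G_2+JF_2G_1=0$. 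No case distinction, no appeal to regularity, is needed.
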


% \begin{corollary}\label{corzerN}
%  Let $f\in\mathcal{S}(\Omega_D)$ be a slice function. If $x\in\Omega_D\cap V(f)$, then $\mathbb{S}_x\subset V(N(f))$. 
% \end{corollary}

\begin{corollary}[\cite{ghiloniperotti}, Corollary 19]\label{corzerN}
 If $f$ is a real slice function then $f$ does not have $\mathbb{S}$-isolated non-real zeros. 
 Moreover, for any slice function $f\in\mathcal{S}(\Omega_D)$, if $x\in\Omega_D\cap V(f)$, then $\mathbb{S}_x\subset V(N(f))$. 
 Therefore, for any slice function $f$, it holds:
 \begin{equation*}
  V(N(f))=\bigcup_{x\in V(f)}\mathbb{S}_x.
 \end{equation*}
\end{corollary}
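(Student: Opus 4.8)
The plan is to derive both assertions from a single rigidity phenomenon for the zero set of a \emph{real} slice function, exploiting the fact that the normal function $N(f)$ is always real.

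\emph{Step 1 (a real slice function has no $\mathbb{S}$-isolated non-real zeros).} Let $f=\mathcal{I}(F)$ with $F=F_1+\sqrt{-1}F_2$ and $F_1,F_2$ real-valued, and suppose $x=\alpha+J\beta\in V(f)$. Then $F_1(\alpha+i\beta)+JF_2(\alpha+i\beta)=0$ with $F_1(\alpha+i\beta),F_2(\alpha+i\beta)\in\mathbb{R}$; since $1$ and $J$ are $\mathbb{R}$-linearly independent in $\mathbb{H}$, this forces $F_1(\alpha+i\beta)=F_2(\alpha+i\beta)=0$, hence $f$ vanishes identically on $\mathbb{S}_x$. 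Thus alternative (3) of the structure theorem (\cite{ghiloniperotti}, Theorem 17) never occurs for $f$, and in particular $V(f)$ is a circular set, $V(f)=\bigcup_{x\in V(f)}\mathbb{S}_x$.

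\emph{Step 2 ($N(f)$ is real).} Expanding the slice product one writes $N(f)=f\cdot f^c=\mathcal{I}\big((F_1F_1^c-F_2F_2^c)+\sqrt{-1}(F_1F_2^c+F_2F_1^c)\big)$. For quaternions $q,w$ one has $qq^c=|q|^2\in\mathbb{R}$ and $qw^c+wq^c=(qw^c)+(qw^c)^c\in\mathbb{R}$, and these identities hold pointwise for the $\mathbb{H}$-valued components $F_1,F_2$. Hence both components of the stem function of $N(f)$, namely $|F_1|^2-|F_2|^2$ and $F_1F_2^c+F_2F_1^c$, are real-valued, so $N(f)$ is a real slice function.

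\emph{Step 3 (conclusion).} Applying Step 1 to $N(f)$, together with \cite{ghiloniperotti}, Theorem 17, the set $V(N(f))$ is circular: $V(N(f))=\bigcup_{x\in V(N(f))}\mathbb{S}_x$. By Proposition \ref{zerosprod} applied to $f$ and $f^c$ this equals $\bigcup_{x\in V(f)\cup V(f^c)}\mathbb{S}_x$. It remains to check that every sphere meeting $V(f^c)$ already meets $V(f)$: if $f^c(\alpha+J\beta)=0$ and we set $F_k:=F_k(\alpha+i\beta)$, then $F_1^c=-JF_2^c$, and conjugating gives $F_1=F_2J$; if $F_2=0$ then also $F_1=0$ and $\mathbb{S}_{\alpha+J\beta}\subset V(f)$, while if $F_2\neq0$ then $I:=-F_2JF_2^{-1}$ satisfies $I^2=-1$ and $f(\alpha+I\beta)=F_2J+IF_2=0$. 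In either case $\mathbb{S}_{\alpha+J\beta}\cap V(f)\neq\emptyset$, so $\bigcup_{x\in V(f^c)}\mathbb{S}_x\subseteq\bigcup_{x\in V(f)}\mathbb{S}_x$, whence $V(N(f))=\bigcup_{x\in V(f)}\mathbb{S}_x$. The computational heart is Step 2; after that everything follows from the structure theorem and Proposition \ref{zerosprod}, and the only slightly delicate bookkeeping is the final reduction showing that $f$ and $f^c$ share the same circularized zero set, which is exactly the short quaternionic computation above.
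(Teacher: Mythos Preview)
Your argument is correct. The paper does not supply a proof of this corollary here; it simply quotes the result from \cite{ghiloniperotti}, so there is no in-paper proof to compare against. Your three steps are all sound: Step~1 is the standard linear-independence observation, Step~2 is the right computation showing $N(f)$ is real, and Step~3 correctly combines Proposition~\ref{zerosprod} with a short quaternionic check that $\bigcup_{x\in V(f^c)}\mathbb{S}_x\subseteq\bigcup_{x\in V(f)}\mathbb{S}_x$.

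One remark on economy: the detour through Proposition~\ref{zerosprod} and the $f$-versus-$f^c$ reduction is not strictly necessary. Once you know from Step~2 that the stem components of $N(f)$ are $|F_1|^2-|F_2|^2$ and $F_1F_2^c+F_2F_1^c$, the identity $V(N(f))=\bigcup_{x\in V(f)}\mathbb{S}_x$ follows directly: a sphere $\mathbb{S}_{\alpha+J\beta}$ lies in $V(N(f))$ iff $|F_1|=|F_2|$ and $\mathrm{Re}(F_1F_2^c)=0$ at $\alpha+i\beta$, which (when $F_2\neq0$) says precisely that $I:=-F_1F_2^{-1}$ has norm~$1$ and zero real part, i.e.\ $I\in\mathbb{S}$ and $f(\alpha+I\beta)=0$. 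This is the same computation you perform at the end, just applied one step earlier; it avoids invoking Proposition~\ref{zerosprod} and the separate treatment of $f^c$. Your route is nevertheless valid and self-contained.
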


In the next theorem we add the hypothesis of regularity.
\begin{theorem}[\cite{ghiloniperotti}, Theorem 20]\label{preidentity}
 Let $\Omega_D$ be a connected circular domain and let $f$ be a slice regular function such that $N(f)$ does not vanish identically, then $V(f)\cap D_J$ is closed and 
 discrete in $D_J$ for every $J\in\mathbb{S}$.
\end{theorem}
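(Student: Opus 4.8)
\begin{dimo}
The plan is to pass from $f$ to its normal function $N(f)=f\cdot f^{c}$. This function is again slice regular, and since $N(f)^{c}=N(f)$ the stem function $G=G_{1}+\sqrt{-1}G_{2}$ inducing it satisfies $G^{c}=G$, i.e.\ $G_{1}$ and $G_{2}$ are real-valued: $N(f)$ is a \emph{real} slice function. Hence, for a fixed $J\in\mathbb{S}$, the restriction $g:=N(f)|_{D_{J}}$ takes values in $\mathbb{C}_{J}$, and by the characterization of slice regular functions recalled above it is an ordinary holomorphic function $g\colon D_{J}\to\mathbb{C}_{J}$ of one complex variable. Since $V(f)\subseteq V(f\cdot f^{c})=V(N(f))$ by Proposition \ref{zerosprod}, it suffices to prove that $g^{-1}(0)=V(N(f))\cap D_{J}$ is closed and discrete in $D_{J}$: then $V(f)\cap D_{J}$, being a subset of a discrete set, is discrete, and it is closed in $D_{J}$ because $f$ is continuous.

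The core of the argument is to check that $g$ vanishes identically on no connected component of $D_{J}$; granted this, $g^{-1}(0)$ is closed (continuity of $g$) and discrete (the classical identity principle for holomorphic functions, applied on each component). To establish the claim I would first record a structural fact: since $\Omega_{D}$ is a connected circular domain, either $D$ is connected --- and then so is $D_{J}$ --- or $D\cap\mathbb{R}=\emptyset$ and $D$ has exactly two components, exchanged by $z\mapsto\bar{z}$, in which case $D_{J}$ has exactly two components, which are precisely the semislices $D_{J}^{+}$ and $D_{-J}^{+}$ viewed inside $\mathbb{C}_{J}=\mathbb{C}_{-J}$. In either case every component of $D_{J}$ contains $D_{J}^{+}$ or $D_{-J}^{+}$. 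Now if $g$ vanished identically on some component, then $N(f)$ would vanish identically on one of the two semislices; writing $N(f)(\alpha\pm J\beta)=G_{1}(\alpha+i\beta)\pm J\,G_{2}(\alpha+i\beta)$ with $G_{1},G_{2}$ real-valued and $1,J$ linearly independent over $\mathbb{R}$, this forces $G_{1}=G_{2}=0$ on $D\cap\overline{\mathbb{C}^{+}}$, and then, by evenness of $G_{1}$ and oddness of $G_{2}$ in the imaginary part of $z$, on all of $D$; hence $N(f)\equiv0$, contrary to hypothesis. This is exactly where the assumption $N(f)\not\equiv0$ is used, and it is needed: on a circular domain without real points a non-zero slice regular function can be identically zero on a whole semislice, and then $N(f)\equiv0$ while $V(f)\cap D_{J}$ is two-dimensional.

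I expect the only genuinely delicate step to be the structural description of the components of $D_{J}$ and their identification with the semislices: one has to verify, using the connectedness of $\Omega_{D}$, that the circularizations of the conjugation-pairs (or self-conjugate components) of $D$ are pairwise disjoint connected open sets covering $\Omega_{D}$, so that connectedness leaves a single such piece, and then to keep careful track of how $D_{J}^{+}$ and $D_{-J}^{+}$ sit inside the complex line $\mathbb{C}_{J}$. Every other ingredient --- slice regularity and reality of $N(f)$, the inclusion $V(f)\subseteq V(N(f))$, the reduction to an honest one-variable holomorphic function, and the final appeal to the identity principle --- is either quoted above or completely classical.
\end{dimo}
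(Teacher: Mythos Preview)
The paper does not prove this statement; it is quoted as a preliminary result from \cite{ghiloniperotti} without proof. Your argument is correct and is the natural one: pass to the real slice regular function $N(f)$, restrict to the slice $D_{J}$ to obtain a genuine $\mathbb{C}_{J}$-valued holomorphic function, analyse the connected components of $D_{J}$ using the connectedness of $\Omega_{D}$, and conclude via the classical identity principle that $N(f)|_{D_{J}}$ cannot vanish on any component without forcing $N(f)\equiv 0$. The only point worth tightening is purely expository: in the connected case $D\cap\mathbb{R}\neq\emptyset$ the slice $D_{J}$ is already connected and $g\equiv 0$ on it gives $G_{1}=G_{2}=0$ on all of $D$ directly, so the even/odd extension step is only needed in the two-component case; your write-up slightly blurs the two situations, but the logic is sound.
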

In particular in \cite{altavilla,gentilistruppa,stoppatopoles} it is stated an Identity Principle.

\begin{theorem}[Identity principle]\label{identity}
Let $\Omega_D\subset \mathbb{H}$ be a connected domain and let $f:\Omega_{D}\rightarrow \mathbb{H}$ be a slice regular function.
\begin{itemize}
\item  \textbf{(\cite{stoppatopoles}, Proposition 3.3)} If $\Omega_D\cap\mathbb{R}\neq \emptyset$ and if there exists $I\in\mathbb{S}$ such that $D_{I}\cap V(f)$ has an accumulation point, 
then $f\equiv 0$ on $\Omega_{D}$. 
\item \textbf{(\cite{altavilla}, Theorem 3.6)} If $\Omega_D\cap\mathbb{R}= \emptyset$ and if there exist $K\neq J\in\mathbb{S}$ such that both 
$D_{K}^{+}\cap V(f)$ and $D_{J}^{+}\cap V(f)$ contain accumulation points, then $f\equiv 0$ on $\Omega_{D}$.
\end{itemize}
\end{theorem}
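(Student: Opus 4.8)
The plan is to reduce both items to the classical identity theorem for holomorphic functions of one complex variable by means of the splitting of Lemma \ref{splitting}, and then to propagate the vanishing from one slice (resp. two semislices) to all of $\Omega_D$ by means of the representation formula of Theorem \ref{representationtheorem}.

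For the first item, fix the given $I\in\mathbb{S}$ and pick $L\in\mathbb{S}$ with $L\perp I$. By Lemma \ref{splitting} we may write $f_I=f_1+f_2L$ with $f_1,f_2:D_I\to\mathbb{C}_I$ holomorphic; here $D_I$ is a connected open subset of $\mathbb{C}_I$, being essentially a copy of the planar domain $D$ (this is where the standing hypothesis that $\Omega_D$ is a slice domain, equivalently that $D$ is a domain meeting $\mathbb{R}$, is used). Every zero of $f$ in $D_I$ is a common zero of $f_1$ and $f_2$, so if $D_I\cap V(f)$ has an accumulation point the one–variable identity theorem forces $f_1\equiv f_2\equiv 0$, hence $f_I\equiv 0$. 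Since $\mathbb{C}_I\supset\mathbb{R}$, for every $x=\alpha+M\beta\in\Omega_D$ both $\alpha+I\beta$ and $\alpha-I\beta$ lie in $D_I$; applying the second formula of Theorem \ref{representationtheorem} (the case $K=-J$, with $J=I$) then gives $f(x)=0$, so $f\equiv 0$ on $\Omega_D$.

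For the second item one runs the same argument on each of the two semislices. Since $\Omega_D\cap\mathbb{R}=\emptyset$ and $\Omega_D$ is connected, we may take $D$ symmetric with $D^+=D\cap\{z\mid\mathrm{Im}(z)>0\}$ connected, so that each semislice $D_J^+$ is a connected open subset of $\mathbb{C}_J^+$. Writing $f_K^+=f_1+f_2L$ as in Lemma \ref{splitting} and invoking the one–variable identity theorem, the hypothesis that $D_K^+\cap V(f)$ has an accumulation point yields $f_K^+\equiv 0$, and likewise $f_J^+\equiv 0$. Given $x=\alpha+M\beta\in\Omega_D$ we may assume $\beta>0$ (replace $M$ by $-M$ if necessary; $\beta\neq 0$ because $\Omega_D$ has no real points), so that $\alpha+J\beta\in D_J^+$ and $\alpha+K\beta\in D_K^+$; the representation formula of Theorem \ref{representationtheorem} for the pair $J\neq K$ then gives $f(x)=0$, and again $f\equiv 0$ on $\Omega_D$.

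The conceptual content — and the reason a single slice suffices in the first item while two distinct semislices are needed in the second — is precisely that a slice function is determined by its restriction to any slice meeting $\mathbb{R}$, whereas without real points one full semislice pins down only ``half'' of the underlying stem function, and Theorem \ref{representationtheorem} is what converts vanishing on two semislices into global vanishing. The one delicate point I anticipate is verifying connectedness of the relevant (semi)slices so that the one–variable identity theorem genuinely applies; this reduces, as indicated, to the standard symmetry and connectedness normalizations on $D$.
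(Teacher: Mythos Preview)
The paper does not give its own proof of this theorem: it is stated in Section~2 as a preliminary result, with the two items attributed respectively to \cite{stoppatopoles} and \cite{altavilla}. Your argument is correct and is exactly the standard one used in those references --- splitting lemma on the relevant (semi)slice to reduce to the classical one-variable identity theorem, then the representation formula to globalize --- so there is nothing to compare beyond noting that you have reconstructed the expected proof.
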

\begin{corollary}\label{identsp}
 Let $\Omega_D\subset \mathbb{H}$ be a connected domain and let $f:\Omega_{D}\rightarrow \mathbb{H}$ be a slice regular function.
If there exist a sphere $\mathbb{S}_x\subset\Omega_D\cap V(f)$ and a sequence $\{x_n\}_{n\in\mathbb{N}}\subset \Omega_D\setminus \mathbb{S}_x$
such that $x_n$ converges to $x$ and $\mathbb{S}_{x_n}\subset V(f)$ for any $n$, then $f$ vanishes identically.
\end{corollary}

\begin{remark}\label{identrk}
 In the proof of \cite{altavilla}, Theorem 3.6, it is actually proved that, if $f\in\mathcal{SR}(\Omega_D)$ and
 $V(f)\cap D_J^+$ contains an accumulation point, for some $J\in\mathbb{S}$, then the restriction $f_J^+$
 is identically zero.
\end{remark}

The distinction between the two cases in the previous theorem is underlined by the next example.
\begin{example}\label{exe1}
The slice regular function defined on $\mathbb{H}\setminus \mathbb{R}$ by
\begin{equation*}
f(x)=1-Ii,\quad x=\alpha +\beta I \in\mathbb{C}_{I}^{+}
\end{equation*}
is induced  by a locally constant stem function and its zero set $V(f)$ is the half plane $\mathbb{C}_{-i}^{+}\setminus\mathbb{R}$.
The function can be obtained by the representation formula in Theorem \ref{representationtheorem} by choosing the constant values $2$ on 
$\mathbb{C}_{i}^{+}\setminus\mathbb{R}$ and $0$ on $\mathbb{C}_{-i}^{+}\setminus \mathbb{R}$.
\end{example}

The notion of slice constant function was introduced in \cite{altavilla} to isolate the class of functions for which the previous example is a representative.
\begin{definition}[\cite{altavilla}]
 Let $f=\mathcal{I}(F)\in\mathcal{S}(\Omega_{D})$. $f$ is called \textit{slice constant} if the stem function $F$ is locally constant.
\end{definition}

\begin{proposition}[\cite{altavilla}, Theorem 3.4]
Let $f\in\mathcal{S}(\Omega_{D})$ be a slice constant function, then $f$ is slice regular.
Moreover $f$ is slice constant if and only if
\begin{equation*}
\frac{\partial f}{\partial x}=\mathcal{I}\left(\frac{\partial F}{\partial z}\right)\equiv 0.
\end{equation*}
\end{proposition}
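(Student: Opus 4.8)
The plan is to read both statements straight off the definition of a slice constant function — i.e. off the local constancy of the inducing stem function $F=F_1+\sqrt{-1}F_2$ on $D$ — combined with the displayed expressions of $\partial F/\partial z$ and $\partial F/\partial\bar z$ in terms of the real partial derivatives $\partial/\partial\alpha$, $\partial/\partial\beta$ recalled above. Throughout I may and will assume, without loss of generality, that $D$ is symmetric with respect to the real axis.

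First I would settle the initial assertion together with the ``only if'' part of the equivalence. If $f=\mathcal{I}(F)$ is slice constant then $F$ is locally constant on $D$, hence of class $\mathcal{C}^\infty$ (so $f\in\mathcal{S}^1(\Omega_D)$) and $\partial F/\partial\alpha\equiv\partial F/\partial\beta\equiv 0$. Substituting into
\[
\frac{\partial F}{\partial\bar z}=\frac12\left(\frac{\partial F}{\partial\alpha}+\sqrt{-1}\,\frac{\partial F}{\partial\beta}\right),\qquad
\frac{\partial F}{\partial z}=\frac12\left(\frac{\partial F}{\partial\alpha}-\sqrt{-1}\,\frac{\partial F}{\partial\beta}\right)
\]
gives $\partial F/\partial\bar z\equiv 0$ and $\partial F/\partial z\equiv 0$. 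The first says that $F$ is a holomorphic stem function, so $f=\mathcal{I}(F)$ is slice regular; the second says $\partial f/\partial x=\mathcal{I}(\partial F/\partial z)\equiv 0$, which is the ``only if'' direction.

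For the ``if'' direction I work inside $\mathcal{SR}(\Omega_D)$ — which costs nothing, since by the first part a slice constant function is automatically slice regular — and assume $\partial f/\partial x=\mathcal{I}(\partial F/\partial z)\equiv 0$. On a circular domain $\Omega_D$ with $D$ symmetric, a slice function determines its stem function uniquely (the components $F_1,F_2$ are recovered from the restriction of the function to a single slice via the representation formula of Theorem \ref{representationtheorem}; alternatively one invokes Lemma \ref{lemma2.1} slicewise), so the operator $\mathcal{I}$ is injective and $\partial F/\partial z\equiv 0$. Slice regularity of $f$ means precisely $\partial F/\partial\bar z\equiv 0$. Adding and subtracting,
\[
\frac{\partial F}{\partial\alpha}=\frac{\partial F}{\partial z}+\frac{\partial F}{\partial\bar z}\equiv 0,\qquad
\sqrt{-1}\,\frac{\partial F}{\partial\beta}=\frac{\partial F}{\partial\bar z}-\frac{\partial F}{\partial z}\equiv 0,
\]
so both real partial derivatives of the $\mathcal{C}^1$ map $F$ vanish identically on $D$; hence $F$ is locally constant and $f$ is slice constant.

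I do not expect a genuine obstacle: the argument is bookkeeping with the Cauchy–Riemann-type identities for stem functions. The two points that deserve a word of care are (i) the passage from $\mathcal{I}(\partial F/\partial z)\equiv 0$ to $\partial F/\partial z\equiv 0$, i.e. the injectivity of $\mathcal{I}$ on stem functions over a symmetric domain, and (ii) the fact that in the converse implication the slice-regularity hypothesis $\partial F/\partial\bar z\equiv 0$ is truly used: without it the equivalence fails, as the conjugation $f(x)=x^c$ shows — its stem function $\bar z$ satisfies $\partial F/\partial z\equiv 0$, so $\partial f/\partial x\equiv 0$, yet $F$ is anything but locally constant (and $f$ is not even slice regular).
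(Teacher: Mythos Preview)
Your argument is correct and is the natural direct one. The paper itself does not prove this proposition --- it is quoted from \cite{altavilla}, Theorem 3.4 --- so there is no in-paper proof to compare against; the argument there proceeds along the same lines you give (local constancy of $F$ kills both Wirtinger derivatives, giving regularity and $\partial f/\partial x\equiv 0$; conversely, vanishing of both $\partial F/\partial z$ and $\partial F/\partial\bar z$ forces $\partial F/\partial\alpha=\partial F/\partial\beta=0$).

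Your remark (ii) is well taken: the counterexample $f(x)=x^c$ shows that the ``if'' direction of the equivalence genuinely needs slice regularity, so the biconditional must be read within $\mathcal{SR}(\Omega_D)$. One phrasing to tighten: the justification ``which costs nothing, since by the first part a slice constant function is automatically slice regular'' reads as circular at first glance. What you actually mean is that restricting the characterization to $\mathcal{SR}(\Omega_D)$ discards no slice constant functions (they all live there by the first part), so nothing is lost by stating the equivalence in that class --- which is indeed the correct arena, as your counterexample confirms.
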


The next definition is needed for defining the multiplicity of a slice function at a point.
\begin{definition}[\cite{ghiloniperotti}]%\label{characteristic}
The characteristic polynomial of $y$ is the slice regular function $\Delta_y(x):\mathbb{H}\rightarrow\mathbb{H}$ defined by:
 \begin{equation*}
  \Delta_y(x):=N(x-y)=(x-y)\cdot (x-y^c)=x^2-x(y+y^c)+yy^c.
 \end{equation*}
\end{definition}
\begin{remark}
 The following facts about the characteristic polynomial are quite obvious. We refer the reader to \cite{ghiloniperotti}, Section 7.2.
 \begin{itemize}
  \item $\Delta_y$ is a real slice function.
  \item Two characteristic polynomials $\Delta_y$, $\Delta_y'$ coincide if and only if $\mathbb{S}_y=\mathbb{S}_{y'}$.
  \item $V(\Delta_y)=\mathbb{S}_y$.
 \end{itemize}
\end{remark}

It is showed in \cite{ghiloniperotti}, Corollary 23, that, if $f$ belongs to
$\mathcal{SR}(\Omega_D)$ and $x_0\in V(f)$, then $\Delta_{x_0}(x)$ divides $N(f)$.
Thanks to this fact we are able to give the following definition.
\begin{definition}[\cite{ghiloniperotti}]
 Let $f\in\mathcal{SR}(\Omega_D)$ such that $N(f)$ does not vanish identically.
 Given $n\in\mathbb{N}$ and $x_0\in V(f)$, we say that $x_0$ is a zero of $f$
 of \textit{total multiplicity} $n$, and we will denote it by $m_f(x_0)$,  
 if $\Delta_{x_0}^n\mid N(f)$ and $\Delta_{x_0}^{n+1}\nmid N(f)$. 
 If $m_f(x_0)=1$, then $x_0$ is called a \textit{simple zero} of $f$.
\end{definition}

The last definition, stated in \cite{ghiloniperotti}, is equivalent to the one of \textit{total multiplicity} stated in \cite{gentilistruppa2,genstostru}. 
The adjective ``total'' was introduced to underline the fact that this integer take
into accounts both spherical and isolated order of zero of a point. We will use this adjective in this paper 
to  distinguish the last notion of multiplicity to the one stated at the end of this section.

We recall now the definition of the degenerate set of a function.

\begin{definition}[\cite{genstostru}]
 Let $f\in\mathcal{S}(\Omega_D)$ and let $x=\alpha+I\beta\in\Omega_D$, $\beta>0$ be such that $\mathbb{S}_x=\alpha+\mathbb{S}\beta\subset \Omega_D$. 
 The 2-sphere $\mathbb{S}_x$ is said to be \textit{degenerate}
 for $f$ if the restriction $f|_{\mathbb{S}_x}$ is constant. The union $D_f$ of all 
 degenerate spheres for $f$ is called \textit{degenerate set} of $f$.
\end{definition}

Observe that the degenerate set of a slice function is a circular domain.
We will now state some properties of the degenerate set of a slice function.
First of all, the degenerate set of a slice function can be described as the zero set of the spherical
derivative as stated in the following proposition.

\begin{proposition}[\cite{altavilla}, Proposition 4.11]\label{degenerateprop}
 Let $f$ be a slice function over $\Omega_D$, then we have the following equality:
 \begin{equation*}
  D_f=V(\partial_s f).
 \end{equation*}
 Moreover $D_f$ is closed in $\Omega_{D}\setminus\mathbb{R}$.
\end{proposition}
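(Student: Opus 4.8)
The plan is to express both $D_f$ and $V(\partial_s f)$ through the stem function $F = F_1 + \sqrt{-1}F_2$ and then reduce the statement to the characterization of the spherical derivative recorded in Remark~\ref{spherical}. First I would fix a point $x = \alpha + I\beta \in \Omega_D$ with $\beta > 0$, so that $\mathbb{S}_x \subset \Omega_D$, and recall that on $\mathbb{S}_x$ the function $f$ takes the values $f(\alpha + J\beta) = F_1(\alpha + i\beta) + J F_2(\alpha + i\beta)$ as $J$ ranges over $\mathbb{S}$. Since $F_1(\alpha + i\beta)$ and $F_2(\alpha + i\beta)$ are fixed quaternions, the restriction $f|_{\mathbb{S}_x}$ is the map $J \mapsto F_1(\alpha + i\beta) + J F_2(\alpha + i\beta)$; this is constant precisely when $F_2(\alpha + i\beta) = 0$, because if $F_2(\alpha+i\beta)\neq 0$ then choosing $J, -J \in \mathbb{S}$ gives two distinct values $F_1 \pm J F_2$. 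Hence $\mathbb{S}_x$ is degenerate for $f$ if and only if $F_2(\alpha + i\beta) = 0$.

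Next I would compute $\partial_s f$ on $\mathbb{S}_x$ using the formula from Remark~\ref{spherical}, namely $\partial_s f = \mathcal{I}\!\left(\frac{F_2(z)}{\operatorname{Im}(z)}\right)$ on $\Omega_D \setminus \mathbb{R}$. Concretely, for $x = \alpha + I\beta$ with $\beta > 0$ one has $\partial_s f(x) = \frac{1}{\beta} F_2(\alpha + i\beta)$ (the stem function inducing $\partial_s f$ has zero second component, so the value is independent of $I$, consistent with the fact that $\partial_s f$ is constant on spheres). Therefore $\partial_s f$ vanishes on $\mathbb{S}_x$ if and only if $F_2(\alpha + i\beta) = 0$, which by the previous paragraph is exactly the condition that $\mathbb{S}_x$ be degenerate. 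Running this equivalence over all $x \in \Omega_D \setminus \mathbb{R}$ — noting that $D_f$ is a union of whole spheres contained in $\Omega_D \setminus \mathbb{R}$ by definition, and $V(\partial_s f)$ is likewise a union of whole spheres since $\partial_s f$ is sphere-wise constant — yields $D_f = V(\partial_s f)$.

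Finally, for the closedness statement I would argue that $\partial_s f$ is a continuous function on $\Omega_D \setminus \mathbb{R}$: indeed $F_2$ is continuous on $D$ (being the second component of the continuous stem function $F$) and $\operatorname{Im}(z)$ is continuous and nonvanishing on $D \setminus \mathbb{R}$, so $F_2(z)/\operatorname{Im}(z)$ is continuous there, and hence so is the induced slice function $\partial_s f = \mathcal{I}(F_2/\operatorname{Im})$ on $\Omega_D \setminus \mathbb{R}$. Then $V(\partial_s f)$ is the preimage of $\{0\}$ under a continuous map, hence closed in $\Omega_D \setminus \mathbb{R}$, and the equality $D_f = V(\partial_s f)$ gives the claim for $D_f$.

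I do not expect a serious obstacle here; the only point needing a little care is the bookkeeping that both sides are genuinely unions of \emph{entire} spheres $\mathbb{S}_x$ (so that the pointwise equivalence "$f|_{\mathbb{S}_x}$ constant $\iff \partial_s f \equiv 0$ on $\mathbb{S}_x$" upgrades to set equality), together with making sure the regularity hypothesis on $F$ needed to speak of $\partial_s f$ as a slice function is available — but for the mere zero set one only needs $F$ continuous, and the degenerate-set definition already presupposes $f \in \mathcal{S}(\Omega_D)$.
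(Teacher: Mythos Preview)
Your proposal is correct and follows exactly the approach the paper intends: the paper's own proof is the single line ``trivial thanks to Remark~\ref{spherical}'', and what you have done is spell out the content of that remark (namely, that $\partial_s f$ is constant on each sphere and vanishes there precisely when $f$ is constant on that sphere) together with the continuity of $\partial_s f$ on $\Omega_D\setminus\mathbb{R}$ to get closedness. There is nothing to add.
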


\begin{proof}
 The proof of the statement is trivial thanks to Remark \ref{spherical}. 
 \end{proof}

As usual, adding the regularity property implies several additional results as
the following one.

\begin{proposition}[\cite{altavilla}, Proposition 4.12]\label{dfisempty}
If $f\in\mathcal{SR}(\Omega_{D})$ is non-constant, then, the interior of $D_{f}$ is empty.
\end{proposition}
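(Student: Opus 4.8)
The plan is to argue by contradiction: assume $\mathrm{int}(D_f)\neq\emptyset$ and deduce that $f$ is globally constant. By Proposition \ref{degenerateprop} we have $D_f=V(\partial_s f)$ and $D_f$ is a circular set, so its interior is a non-empty open circular set, say $\mathrm{int}(D_f)=\Omega_E$ with $E\subset\mathbb{C}$ open and $E\cap\mathbb{R}=\emptyset$ (recall $D_f\subset\Omega_D\setminus\mathbb{R}$). On $\Omega_E$ the spherical derivative vanishes, so by Remark \ref{spherical} the second component $F_2$ of the holomorphic stem function $F=F_1+\sqrt{-1}F_2$ of $f$ vanishes identically on $E$. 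Plugging $F_2\equiv 0$ into the Cauchy--Riemann equations for $F$ gives $\partial F_1/\partial\alpha=\partial F_1/\partial\beta=0$ throughout the open set $E$, hence $F$ is locally constant on $E$; equivalently $\partial f/\partial x=\mathcal{I}(\partial F/\partial z)\equiv 0$ on $\Omega_E$.

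Next I would propagate this vanishing to all of $\Omega_D$. Since $F$ is holomorphic, so is $\partial F/\partial z$ (the mixed second partials commute and $\partial F/\partial\bar z\equiv 0$), and it is a stem function, so $\partial f/\partial x\in\mathcal{SR}(\Omega_D)$. Being zero on the non-empty open set $\Omega_E$, the function $\partial f/\partial x$ vanishes on a set having accumulation points inside a single slice $D_I$ (if $\Omega_D\cap\mathbb{R}\neq\emptyset$) or inside two distinct semislices $D_J^+,D_K^+$ (if $\Omega_D\cap\mathbb{R}=\emptyset$); indeed $\Omega_E$ meets such (semi)slices in non-empty open subsets of two-real-dimensional planes. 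The Identity Principle (Theorem \ref{identity}) then forces $\partial f/\partial x\equiv 0$ on $\Omega_D$, that is, $f$ is slice constant (by the characterization of slice constant functions through the vanishing of $\partial f/\partial x$).

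It remains to see that a slice constant $f$ with $\mathrm{int}(D_f)\neq\emptyset$ is in fact globally constant, contradicting the hypothesis. If $f$ is slice constant then $F$, and in particular $F_2$, is locally constant on $D$; since $\partial_s f=\mathcal{I}(F_2/\mathrm{Im}(z))$, the set $D_f$ is then clopen in $\Omega_D\setminus\mathbb{R}$, so, having non-empty interior, it contains a whole connected component $U$ of $\Omega_D\setminus\mathbb{R}$, on which $F_2\equiv 0$ and $F\equiv F_1$ with $F_1$ locally constant. A direct evaluation (via the representation formula of Theorem \ref{representationtheorem}, or simply from $\partial_s f\equiv 0$ on $U$) shows $f$ is constant on $U$, and a final application of the Identity Principle on the connected domain $\Omega_D$ gives that $f$ is constant on $\Omega_D$. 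This last step is where care is needed, and is the main obstacle: one must exclude the scenario in which a slice constant but non-constant function (such as the one in Example \ref{exe1}) has a degenerate set with non-empty interior, and the clopenness of $D_f$ for slice constant $f$ is exactly what rules it out. A little point-set bookkeeping (connectedness of $E$, of $\Omega_D\setminus\mathbb{R}$ and of $D$ relative to that of $\Omega_D$, and the behaviour at real points) also enters, but is routine.
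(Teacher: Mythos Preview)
The paper does not prove this proposition; it is quoted from \cite{altavilla} without argument, so there is no proof in the paper to compare against. Your argument is correct, but it takes a detour that can be avoided. Once you have shown that $F_2\equiv 0$ on the open set $E\subset D$ and hence (by Cauchy--Riemann) that the holomorphic stem function $F$ is locally constant on $E$, you can finish immediately: $F:D\to\mathbb{H}_\mathbb{C}$ is holomorphic, so the classical identity theorem (applied componentwise) forces $F$ to be constant on the connected component of $D$ containing $E$, and the even--odd symmetry of $(F_1,F_2)$ carries this to the conjugate component if $D\cap\mathbb{R}=\emptyset$. Thus $F$ is globally constant and $f$ is constant, contradicting the hypothesis. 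Your route through ``$\partial f/\partial x\equiv 0$ on $\Omega_E$ $\Rightarrow$ $f$ slice constant via the quaternionic Identity Principle $\Rightarrow$ $D_f$ clopen $\Rightarrow$ $f$ constant on a component $\Rightarrow$ Identity Principle again'' is valid (the clopenness of $D_f$ for slice constant $f$ does hold, since $F_2$ is then locally constant), but it re-derives on $\Omega_D$ what the complex identity theorem already gives on $D$ in one step.
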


In the next part of this subsection we say something more about the zero set of a slice regular function.
These results were exposed for the first time in the Ph.D. thesis \cite{altavillaPhD} of the author  and 
deal with the possibility, for a slice regular function $f:\Omega_D\rightarrow\mathbb{H}$, with $\Omega_D\cap\mathbb{R}=\emptyset$, 
to admit
surfaces $S_f\subset\Omega_D$ on which the function is constant.

\begin{theorem}\label{thmsurf}
Let $\Omega_{D}$ be a connected circular domain such that $\Omega_{D}\cap\mathbb{R}=\emptyset$. Let $f\in\mathcal{SR}(\Omega_{D})$ 
be a non-constant function. If  $x_{0}\in V(f)$ is not isolated in $V(f)$, then
there exists a real smooth surface $\mathcal{S}\subset\Omega_{D}$ such that $x_{0}\in \mathcal{S}\subset V(f)$.
\end{theorem}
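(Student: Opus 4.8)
The plan is to localize near $x_0$ and use the splitting lemma (Lemma \ref{splitting}) together with the identity principle (Theorem \ref{identity}). Since $f$ is non-constant and $\Omega_D$ is connected with $\Omega_D\cap\mathbb{R}=\emptyset$, the normal function $N(f)$ does not vanish identically (otherwise $f$ would have too large a zero set on both semislices, forcing $f\equiv0$ by the second bullet of Theorem \ref{identity}); hence $V(f)\cap D_J^+$ is closed and discrete in $D_J^+$ for every $J\in\mathbb{S}$. So on each fixed semislice the zeros are isolated. The point $x_0=\alpha_0+J_0\beta_0$ being non-isolated in $V(f)$ as a subset of $\mathbb{H}$ therefore means that zeros accumulate at $x_0$ \emph{across different semislices}, i.e. in the $\mathbb{S}$-direction.

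First I would fix $J_0$ with $x_0\in D_{J_0}^+$ and write, via Lemma \ref{splitting}, $f_{J_0}=f_1+f_2K$ on a neighbourhood of $x_0$ in $D_{J_0}^+$ with $f_1,f_2$ holomorphic $\mathbb{C}_{J_0}$-valued; by discreteness of $V(f)\cap D_{J_0}^+$, after possibly shrinking we may assume $f_{J_0}(z)\neq0$ for $z$ near $\alpha_0+i\beta_0$, $z\neq\alpha_0+i\beta_0$ (if $x_0$ is isolated on its own semislice) or $f_{J_0}\equiv0$ near $x_0$. The real surface is then constructed using the representation formula (Theorem \ref{representationtheorem}): a zero of $f$ at a point $\alpha+I\beta$ with $I\neq\pm J_0$ is governed by a linear relation between $f(\alpha+J_0\beta)$ and $f(\alpha-J_0\beta)$, so the set of $(\alpha,\beta,I)$ for which this relation forces $f=0$ is, generically, cut out by finitely many holomorphic equations in two of the coordinates together with the free choice of $I$ — when exactly one imaginary unit $I=I(\alpha,\beta)$ solves it, the zero locus is locally the graph $\{\alpha+I(\alpha,\beta)\beta\}$, a real two-dimensional surface $\mathcal S$ through $x_0$. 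The key computation is to exhibit $I(\alpha,\beta)$ explicitly: from Theorem \ref{representationtheorem} with $K=-J_0$, $f(\alpha+I\beta)=0$ is equivalent to $IJ_0\bigl(f(\alpha+J_0\beta)-f(\alpha-J_0\beta)\bigr)=f(\alpha+J_0\beta)+f(\alpha-J_0\beta)$, which (when the bracketed difference is nonzero) determines $I$ uniquely as a function of $\alpha,\beta$ and shows $\mathcal S$ is a real-analytic surface; the finitely many $(\alpha,\beta)$ where the difference vanishes are handled by the discreteness already noted, or correspond to spherical zeros which would again violate discreteness of $V(f)\cap D_{J_0}^+$ unless $x_0$ lies on a degenerate-type sphere inside $V(f)$, which is the case $\mathbb{S}_{x_0}\subset V(f)$ and is itself a $2$-sphere $\subset V(f)$.

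For the last assertion — that $V(f)$ contains no $3$-manifold $M$ — I would argue by contradiction using slices: if $M\subset V(f)$ is a $3$-manifold, then for a generic $J\in\mathbb{S}$ the intersection $M\cap\mathbb{C}_J^+$ has positive dimension, hence contains an accumulation point, so $D_J^+\cap V(f)$ has an accumulation point; doing this for two distinct units $J\neq K$ (possible since $M$ is $3$-dimensional while each $\mathbb{C}_J^+$ is $2$-dimensional, so $M$ cannot be contained in finitely many slices) and invoking the second bullet of Theorem \ref{identity} forces $f\equiv0$, contradicting non-constancy. The transversality claim that $M\cap\mathbb{C}_J^+$ is at least $1$-dimensional for at least two values of $J$ is the one routine point to make precise: $\Omega_D\setminus\mathbb{R}$ fibers as $D^+\times\mathbb{S}$ (the remark after the circularization definition), the projection to $\mathbb{S}$ restricted to $M$ cannot have image of measure zero without $M$ collapsing into a lower-dimensional set, so its image contains at least two points, and over each such point the fiber $M\cap(D^+\times\{J\})$ is $\geq 1$-dimensional by the fiber-dimension inequality.

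The main obstacle I expect is the first part: showing that near a non-isolated zero the locus $V(f)$ is \emph{exactly} a smooth $2$-surface (not something lower-dimensional or singular), which requires controlling the degenerate case where $f(\alpha+J_0\beta)=f(\alpha-J_0\beta)$ on a nontrivial set, and verifying that the function $I(\alpha,\beta)$ extracted from the representation formula genuinely takes values in $\mathbb{S}$ and depends smoothly on its arguments. Everything else is bookkeeping with the identity principle and the fibration $\Omega_D\simeq D^+\times\mathbb{S}$.
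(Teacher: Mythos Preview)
Your argument contains a genuine error at the outset that undermines the surface construction. You assert that $N(f)\not\equiv 0$ because ``otherwise $f$ would have too large a zero set on both semislices, forcing $f\equiv 0$.'' This implication is false when $\Omega_D\cap\mathbb{R}=\emptyset$: Example~\ref{exe1} gives a non-constant slice regular $f$ on $\mathbb{H}\setminus\mathbb{R}$ with $N(f)\equiv 0$ (its zero set is the single semislice $\mathbb{C}_{-i}^+$, so only one semislice carries accumulation points and Theorem~\ref{identity} does not apply). The paper even states explicitly in a later remark that $N(f)\equiv 0\Rightarrow f\equiv 0$ requires $\Omega_D\cap\mathbb{R}\neq\emptyset$.

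This is not a cosmetic slip: the paper's proof runs in the \emph{opposite} direction. It shows that a non-isolated zero (off the degenerate set and off any semislice contained in $V(f)$) \emph{forces} $N(f)\equiv 0$, because the sphere through every nearby zero must meet $V(N(f))$, giving accumulation of zeros of $N(f)$ on a semislice. The identity $N(f)\equiv 0$ then translates into the system $\lVert F_1\rVert=\lVert F_2\rVert$, $\langle F_1,F_2\rangle=0$, which is exactly what guarantees that your candidate $I(\alpha,\beta)=-F_1(z)F_2(z)^{-1}$ lies in $\mathbb{S}$ for \emph{every} $z\in D^+$, producing a genuine surface. Your representation-formula equation is equivalent to $I=-F_1F_2^{-1}$, so you have the right formula for $I$, but without $N(f)\equiv 0$ there is no reason this quaternion has norm one and vanishing real part; the ``main obstacle'' you flag at the end is precisely the point, and it is resolved by the claim you dismissed at the beginning. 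Your $3$-manifold argument via the fibration $D^+\times\mathbb{S}$ and fiber dimension is essentially the paper's, and is fine.
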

\begin{proof}
Let $f=\mathcal{I}(F_1+\sqrt{-1}F_2)$ be a slice regular function, $x=\alpha+I\beta\in \Omega_D$ and $z=\alpha+i\beta\in D^+$.
If $x$ is an accumulation point in $V(f)\cap \mathbb{S}_x$ then, thanks to Theorem \ref{representationtheorem}, it is clear that the whole sphere
$\mathbb{S}_x$ is contained in the zero locus of $f$. Analogously, if $x$ is an accumulation point for
$V(f)\cap D_I^+$, then, thanks to Theorem \ref{identity} and Remark \ref{identrk}, $D_I^+\subset V(f)$. 
Since both $\mathbb{S}_x$ and $D_I^+$ are smooth surfaces for any $x$ and $I$, the theorem is proved in these two cases.

Let us consider then the case in which $x$ is a generic accumulation point that doesn't accumulate
in any sphere or in any semi-slice.
The point $x$ belongs to $V(f)$ if and only if $F_1(z)+IF_2(z)=0$. Since $x$ doesn't accumulate in any sphere that intersects $V(f)$,
then $F_2(z)\neq 0$.
Therefore, the zero locus of $f$ is equal to 
\begin{equation}\label{exeqn}
 V(f)=D_f^0\sqcup \{x\in D^+\times \mathbb{S}\,|\, x=(z,-F_1(z)F_2(z)^{-1})\in D^+\times\mathbb{S}\},
\end{equation}
where $D_f^0\subset D_f$, denotes the subset of the degenerate set of $f$, such that $f|_{D_f^0}\equiv 0$; i.e.: $D_f^0$
is the set of spherical zeros of $f$.

Since $x$ is an accumulation point in $V(f)\setminus D_f^0$, then there exist a sequence $\{x_n\}_{n\in\mathbb{N}}\subset V(f)\setminus D_f^0$
converging to $x$. Now, thanks to Corollary \ref{corzerN}, $N(f)$ vanishes on each $\mathbb{S}_{x_n}$ and since these spheres accumulate
to $\mathbb{S}_x$, then, thanks to Corollary \ref{identsp}, $N(f)\equiv 0$.
So, for any $z\in D^+$, there exists $I_z\in\mathbb{S}$, such that $(z,I_z)\in (D^+\times \mathbb{S})\cap V(f)$.
Now, the condition $N(f)\equiv 0$,
translates in the following system
\begin{equation}\label{surfzero}
\begin{cases}
g(F_{1}(z),F_{1}(z))-g(F_{2}(z),F_{2}(z))=0\\
g(F_{1}(z),F_{2}(z))=0.
\end{cases},\quad \forall z\in D^+,
\end{equation}
where $g$ is the standard euclidean product.
System \ref{surfzero} implies that, for any $z\in D^+$, $||-F_1(z)F_2(z)^{-1}||=1$ and the real part $Re(F_1(z)F_2(z)^{-1})=0$ and so $F_1(z)F_2(z)^{-1}\in\mathbb{S}$.
Finally,  the set 
\begin{equation*}
 \widetilde{V(f)}=\{x\in D^+\times \mathbb{S}\,|\, x=(z,-F_1(z)F_2(z)^{-1})\}
\end{equation*}
defines a real surface in $D^+\times\mathbb{S}$ that contains the accumulation point $x$.
To end the proof, since the function $f$ is slice regular, then, as expressed in \cite{ghiloniperotti2}, Remark 1.6, $f$ is
real analytic and so its inducing stem function has the same regularity. Therefore, the surface 
previously defined is smooth and the proof is concluded.
 \end{proof}

The zero locus of a non-constant slice regular function $f$, 
contains, then, isolated points, null-spheres and generic surfaces (possibly semi-slices), not contained in the degenerate set.
\begin{remark}
 The structure of the previous proof, even if it is quite elementary and naive,
 highlight the exact equations of the generic zero-surface contained in the 
 circular domain of a slice regular function (see equation \ref{exeqn}).
\end{remark}

Given a circular set $\Omega_D$ such that $\Omega_D\cap\mathbb{R}=\emptyset$, denote by
$p_1:\Omega_D\simeq D^+\cap \mathbb{S}\rightarrow D^+$ the projection over $D^+$: $p_1(\alpha+I\beta)=\alpha+i\beta$.

The following technical lemma will be useful in the last part of this paper.
\begin{lemma}\label{lemmasurf}
Let $\Omega_D$ be a connected circular open domain and $f\in\mathcal{SR}(\Omega_D)$ a slice regular function.
If there exists $q\in\mathbb{H}$ 
 such that $h=f-q$ admits two different non-degenerate smooth surfaces $S_1, S_2$, in 
 its zero locus (i.e.: $S_1, S_2\subset V(h)\setminus D_h$), for which $p_1(S_1)\cap p_1(S_2)\subset D^+$ contains an open set, 
 then $f$ is constant and equal to $q$.
\end{lemma}

\begin{proof}
 Without loss of generality, we can suppose $q=0$. 
 Then, for any $z\in p_1(S_1)\cap p_1(S_2)\subset D^+$ there exist $I_1\neq I_2\in\mathbb{S}$ such that $f$ vanishes 
 both at $(z,I_1)\in S_1$ and $(z,I_2)\in S_2$ in $\Omega_D=D^+\times \mathbb{S}$.
 This imply that the spherical derivative of $f$ is identically zero in $p_1(S_1)\cap p_1(S_2)$ 
 and so $f$ is constant and equal to zero.
 \end{proof}
\begin{remark}
Given a non-constant slice regular function $f$, the condition $N(f)\equiv 0$ defines a surface in $\Omega_{D}$ that could coincide with 
a semi-slice $D_{I}^{+}$, for some $I\in\mathbb{S}$, or not.
We will see in the next pages (see Lemma \ref{emptyinterior}), that the set of surfaces in which a slice regular function, that is not slice-constant,
is constant is contained in a possibly bigger set that is closed and with empty interior.  
\end{remark}

\begin{example}\label{exe2}
Let $g:\mathbb{H}\rightarrow\mathbb{H}$ be the slice regular function defined by
$g(x)=x+j$ and let $f$ be the slice regular function defined in example \ref{exe1}.
Consider now the slice regular function $h:\mathbb{H}\setminus\mathbb{R}\rightarrow\mathbb{H}$
defined by $h:=g\cdot f$. Explicitly, this function is defined by
\begin{equation*}
h(\alpha+I\beta)=\alpha+\beta i+j +I(\beta-\alpha i +k),
\end{equation*}
where $\alpha+I\beta$ belongs to $\mathbb{H}\setminus\mathbb{R}$.
The spherical derivative of $h$ is equal to
\begin{equation*}
\partial_{s}h(\alpha+I\beta)=1-\frac{\alpha}{\beta}i+\frac{k}{\beta},
\end{equation*}
that is always nonzero. Then, the function $h$ is not constant in any sphere.
We want to look for the zero set of $h$ and then we have to impose
the following equation:
\begin{equation*}
h(\alpha+I\beta)=\alpha+\beta i+j +I(\beta-\alpha i +k)=0
\end{equation*}
We remember Proposition \ref{zerosprod} which says that the zero set of the product
$h=g\cdot f$ is composed by the union of the zero set of  $g$ with the zero set of $f$
``properly modified'' (this ``modification'' is given by the formula in Proposition \ref{prodcomp}).
We have then that $h(-j)=0$. Suppose, now, $x\neq -j$. Then $h(x)=0$ if
and only if
\begin{equation*}
I=\frac{-(\alpha^{2}+\beta^{2}-1)i-2\beta j+2\alpha k}{\alpha^{2}+\beta^{2}+1}.
\end{equation*}
But then, the surface $S_{h}:\mathbb{C}^{+}\rightarrow\mathbb{H}\setminus\mathbb{R}$
defined by,
\begin{equation*}
S_{h}(\alpha+i\beta)=\left(\alpha+i\beta,\frac{-(\alpha^{2}+\beta^{2}-1)i-2\beta j+2\alpha k}{\alpha^{2}+\beta^{2}+1} \right)\subset\mathbb{C}^{+}\times\mathbb{S}\simeq\mathbb{H}\setminus\mathbb{R},
\end{equation*}
is a ``non trivial'' surface (i.e.: not a sphere nor a semi-slice), on which the slice
regular function $h$ result to be constant and equal to zero. 
Observe that $-j$ is in the image of $S_{h}$, in fact, for $S_{h}(i)=(i,-j)$,
therefore $V(h)=S_h$.

However, this surface is not the only $2$-dimensional manifold contained
in the domain of $h$ on which the function is constant. The function $h$
is, indeed, constant and equal to $2j$ on the semi-slice $\mathbb{C}_{-i}^{+}$. 
This was suggest by the fact that the slice derivative of $h$
is equal to $\partial h/\partial x(\alpha+I\beta)=1-Ii=f(\alpha+I\beta)$.

Later we will see that these are the only surfaces on which this function is constant.
\end{example}

\begin{remark}
 If $\Omega_D\cap\mathbb{R}\neq\emptyset$, then the fact that a slice regular function
 $f:\Omega_D\rightarrow\mathbb{H}$ is such that $N(f)\equiv 0$, implies that $f\equiv 0$ 
 (see, e.g., \cite{ghiloniperotti}, Theorem 20).
 Then, if there exists a sphere $\mathbb{S}_x\subset\Omega_D$ such that
 the zeros of $f$ in $\Omega_D\setminus\mathbb{S}_x$ accumulates to a 
 point of $\mathbb{S}_x$, this implies that $f\equiv 0$ 
 (cf \cite{genstostru}, Corollary 3.14).
\end{remark}

\subsection{Valence of a holomorphic function}

The next introductory tool come from complex analysis. The main reference for the following is \cite{heins}, Chapter V.9.

\begin{definition}
 Given a holomorphic function $f:D\subset\mathbb{C}\rightarrow\mathbb{C}$ we 
 define the \textit{multiplicity} of $f$ at a point $x\in D$ as the number:
 \begin{equation*}
  n(x;f):=\inf\{k\in\mathbb{N}\setminus \{0\}\,|\,f^{(k)}(x)\neq 0\},
 \end{equation*}
$f^{(k)}(x)$ denoting the $k^{th}$ derivative of $f$ with respect to $z$ evaluated in $x$.
\end{definition}

\begin{definition}
 Given a holomorphic function $f$ defined over a region $D$ we define the 
 \textit{valence} of $f$ at $w\in\mathbb{C}\cup\{\infty\}$ as
 \begin{equation*}
  v_f(w):=\begin{cases}
           +\infty & \mbox{if the set } \{f(z)=w\} \mbox{ is infinite};\\
           \sum_{f(z)=w}n(z;f) & \mbox{otherwise}.
          \end{cases}
 \end{equation*}
 \end{definition}
If $f$ does not take the value $w$, then $v_f(w)$ is obviously equal to zero.
\begin{remark}\label{valencerk}
If $f$ is a holomorphic function on a region $D$ and is not constant, then for any $r>0$, such that $\overline{D(x;r)}\subset D$, 
the valence at $w$ of $f|_{D(x;r)}$ is constant on each component of 
$(\mathbb{C}\cup\{\infty\})\setminus f(\partial D(x;r))$, where $D(x;r)$ denote the disc centered in $x$ of radius $r$.
\end{remark}

\subsection{Slice and spherical differentials of a slice function}

This last preparatory section contains material and ideas that were introduced by the author in \cite{altavillaPhD}.

Let $x\in\mathbb{H}\simeq\mathbb{R}^{4}$, $x=(x_0,x_1,x_2,x_3)$ with $(x_1,x_2,x_3)\neq(0,0,0)$ (i.e.: $x\in\mathbb{H}\setminus\mathbb{R}$). 
When we talk about slice functions we implicitly use the following change of coordinates:
\begin{equation*}
 (x_0,x_1,x_2,x_3)\mapsto (\alpha,\beta,I),
\end{equation*}
where $\alpha\in\mathbb{R}$, $\beta>0$ and $I=I(\vartheta,\varphi)\in\mathbb{S}$ with the following equalities:
\begin{equation*}\left\{
 \begin{array}{rcl}
  \alpha & = & x_0\\
  \beta & = & \sqrt{x_1^2+x_2^2+x_3^2}\\
  \vartheta & = & \arccos(\frac{x_3}{\beta})\\
  \varphi & = & \arctan(\frac{x_2}{x_1}).
 \end{array}\right.
\end{equation*}

Let now $f:\Omega\subset\mathbb{R}^4\rightarrow \mathbb{R}^4$ be any differentiable function. Then, its differential in these new coordinates, can be written
in its domain, as follows
\begin{equation}\label{diff}
 df=\left(\frac{\partial f}{\partial \alpha}d\alpha+\frac{\partial f}{\partial \beta}d\beta\right)+
 \frac{1}{\beta}\left(\frac{\partial f}{\partial\vartheta}d\vartheta+\frac{1}{\sin\vartheta}\frac{\partial f}{\partial\varphi}d\varphi\right),   
\end{equation}
where :
\begin{equation*}\left\{
 \begin{array}{rcl}
  d\alpha & = & dx_0\\
  d\beta & = & \sin\vartheta\cos\varphi dx_1+\sin\vartheta\sin\varphi dx_2+\cos\vartheta dx_3\\
  d\vartheta & = & \cos\vartheta\cos\varphi dx_1+\cos\vartheta\sin\varphi dx_2 -\sin\vartheta dx_3\\
  d\varphi & = & -\sin\varphi dx_1+\cos\varphi dx_2.
 \end{array}\right.
\end{equation*}

We would like, however, to consider also $\beta<0$ (having in mind that a non-real quaternion $x$ can be 
written both as $\alpha+I\beta$ and $\alpha+(-I)(-\beta)$). But in this case we have to take care that $d\beta(-\beta,I)=d\beta(\beta,-I)=-d\beta(\beta,I)$.

The aim of this section is to study the first part of the right hand side of equation \ref{diff}, when $f$ is a $\mathcal{C}^1$ slice function.

We will start with the following general definition.
\begin{definition}\label{slcfrm}
 Let $f=\mathcal{I}(F)\in\mathcal{S}^1(\Omega_D)$. We define the \textit{slice differential} $d_{sl}f$ of $f$ as the following differential form:
 \begin{equation*}
 \begin{array}{rrcc}
     d_{sl}f: &  (\Omega_D\setminus \mathbb{R})  & \rightarrow & \mathbb{H}^*,\\
   & \alpha+I\beta &  \mapsto  & dF_1(\alpha+i\beta)+IdF_2(\alpha+i\beta).
 \end{array}
    \end{equation*}
\end{definition}
\begin{remark}
The one-form $\omega:\mathbb{H}\setminus\mathbb{R}\rightarrow\mathbb{H}^*$ defined as $\omega(\alpha+I\beta)=Id\beta$, 
represents the outer radial direction to the sphere $\mathbb{S}_x=\{\alpha+K\beta\,|\,K\in\mathbb{S}\}$.
Then $\omega(\alpha+I(-\beta))=\omega(\alpha+(-I)\beta)=-\omega(\alpha+I\beta)$.
We can translate this observation in the language of slice forms.
The function $h(x)=Im(x)$ is a slice function induced by $H(z)=\sqrt{-1}Im(z)$.
Then we have $d_{sl}h(\alpha+I\beta)=Id\beta(\alpha+i\beta)$ and, thanks to the previous considerations
$d_{sl}h(\alpha+(-I)(-\beta))=-Id\beta(\alpha-i\beta)=Id\beta(\alpha+i\beta)$.
Summarizing, we have that $d\beta(\bar z)=-d\beta(z)$.
The same doesn't hold for $d\alpha$ which is a constant one form over $\mathbb{H}$ and 
for this reason in the next computations
we will omit the variable (i.e.: $d\alpha=d\alpha(z)=d\alpha(\bar z)$).
\end{remark}
We can show now that the previous definition is well posed.
\begin{proposition}
Definition \ref{slcfrm} is well posed, i.e. if $D$ is symmetric with respect to the real axis, then
 \begin{equation*}
  d_{sl}f(\alpha+I\beta)=d_{sl}f(\alpha+(-I)(-\beta)),\quad \forall \alpha+I\beta\in\Omega_D\setminus\mathbb{R}
 \end{equation*}
\end{proposition}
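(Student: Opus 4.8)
The plan is to verify the claimed equality directly from Definition \ref{slcfrm}, using the parity properties of the stem components $F_1$ and $F_2$ together with the observation recorded in the preceding remark about how $d\beta$ changes sign under $z\mapsto\bar z$. Concretely, write $z=\alpha+i\beta$ so that $\bar z=\alpha-i\beta=\alpha+i(-\beta)$, and recall from Definition \ref{stem} that the stem condition $F(\bar z)=\overline{F(z)}$ is equivalent to $F_1$ being even and $F_2$ being odd in the imaginary part of $z$, i.e.\ $F_1(\bar z)=F_1(z)$ and $F_2(\bar z)=-F_2(z)$ for all $z$ with $\bar z\in D$ (this is where the hypothesis that $D$ is symmetric with respect to the real axis is used).

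The key computational step is to differentiate these parity relations. First I would note that evaluating the slice differential at $\alpha+(-I)(-\beta)$ means, by Definition \ref{slcfrm}, forming $dF_1(\alpha+i(-\beta))+(-I)\,dF_2(\alpha+i(-\beta))=dF_1(\bar z)-I\,dF_2(\bar z)$. Now I differentiate the identities $F_1(\bar z)=F_1(z)$ and $F_2(\bar z)=-F_2(z)$. Writing everything in the real coordinates $(\alpha,\beta)$: the map $z\mapsto\bar z$ is $(\alpha,\beta)\mapsto(\alpha,-\beta)$, so $\partial_\alpha(F_j\circ c)=(\partial_\alpha F_j)\circ c$ and $\partial_\beta(F_j\circ c)=-(\partial_\beta F_j)\circ c$, where $c$ denotes conjugation. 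Hence $dF_1(\bar z)=(\partial_\alpha F_1)(\bar z)\,d\alpha+(\partial_\beta F_1)(\bar z)\,d\beta$; using $F_1(\bar z)=F_1(z)$ gives $(\partial_\alpha F_1)(\bar z)=(\partial_\alpha F_1)(z)$ and $(\partial_\beta F_1)(\bar z)=-(\partial_\beta F_1)(z)$, so $dF_1(\bar z)=(\partial_\alpha F_1)(z)\,d\alpha-(\partial_\beta F_1)(z)\,d\beta$. But since $d\beta$ itself evaluated at $\bar z$ equals $-d\beta$ evaluated at $z$ (the remark preceding the proposition, $d\beta(\bar z)=-d\beta(z)$), the two sign changes cancel and $dF_1$ at the point $\alpha+(-I)(-\beta)$ reproduces $dF_1$ at $\alpha+I\beta$. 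The analogous computation for $F_2$, together with the extra minus sign from $F_2(\bar z)=-F_2(z)$ and the extra minus sign from the coefficient $-I$ in front, again cancels out, yielding $-I\,dF_2(\bar z)=I\,dF_2(z)$ as one-forms at the respective points.

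Assembling these two pieces gives $d_{sl}f(\alpha+(-I)(-\beta))=dF_1(\alpha+i\beta)+I\,dF_2(\alpha+i\beta)=d_{sl}f(\alpha+I\beta)$, which is exactly the claim. The mild subtlety to be careful about — and the only place a naive argument could go wrong — is the bookkeeping of \emph{two} distinct sources of sign: the parity of $F_j$ under $z\mapsto\bar z$, and the orientation-reversal of the form $d\beta$ under the same substitution (as in the preceding remark, $d\alpha$ contributes no sign, $d\beta$ does). One must make sure these are applied consistently at the correct base point rather than double-counted or omitted; once that is done, both terms are manifestly invariant and the proof is a short verification. I expect no genuine obstacle beyond this sign accounting.
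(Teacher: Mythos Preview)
Your proof is correct and follows essentially the same approach as the paper: both arguments unfold the definition at $\alpha+(-I)(-\beta)$, invoke the even/odd parity of $F_1,F_2$ to determine the signs of their partial derivatives at $\bar z$, and combine this with $d\beta(\bar z)=-d\beta(z)$ so that all signs cancel. The paper simply writes this out as an explicit chain of equalities rather than narrating the sign bookkeeping, but the content is identical.
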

\begin{proof}
Let $x=\alpha+J\beta\in\Omega_D\setminus\mathbb{R}$ and $z=\alpha+i\beta$, then,
\begin{multline*}
d_{sl}f(\alpha+(-I)(-\beta))    = \\
  =\left(\displaystyle\frac{\partial F_1(\bar z)-IF_2(\bar z)}{\partial\alpha}\right) d\alpha+\left(\displaystyle\frac{\partial F_1(\bar z)-IF_2(\bar z)}{\partial\beta}\right) d\beta(\bar z)=\\
     =    \displaystyle\frac{\partial F_1}{\partial \alpha}(\bar z)d\alpha+\displaystyle\frac{\partial F_1}{\partial \beta}(\bar z)(-1)d\beta(z)-I\left(\displaystyle\frac{\partial F_2}{\partial \alpha}(\bar z)d\alpha+\displaystyle\frac{\partial F_2}{\partial \beta}(\bar z)(-1)d\beta(z)\right)=\\
     =    \displaystyle\frac{\partial F_1}{\partial \alpha}( z)d\alpha+\displaystyle\frac{\partial F_1}{\partial \beta}( z)d\beta(z)-I\left(-\displaystyle\frac{\partial F_2}{\partial \alpha}( z)d\alpha-\displaystyle\frac{\partial F_2}{\partial \beta}( z)d\beta(z)\right)=\\
     =    \left(\displaystyle\frac{\partial F_1( z)+IF_2( z)}{\partial\alpha}\right) d\alpha+\left(\displaystyle\frac{\partial F_1( z)+IF_2( z)}{\partial\beta}\right) d\beta(z)=\\
     =    d_{sl}f(\alpha+I\beta),
\end{multline*}
where the third equality holds thanks to the even-odd character of the couple $(F_1,F_2)$.
 \end{proof}
To avoid ambiguity, in the following of this section we will consider always $\beta> 0$ and we will omit the argument of the one-form $d\beta$.
We can represent, then, the slice differential as follows.
\begin{proposition}
 Let $f=\mathcal{I}(F)\in\mathcal{S}^1(\Omega_D)$ with $D\subset\mathbb{C}^+$ (so that $\beta>0$). Then, on $\Omega_D\setminus\mathbb{R}$, the following equality holds true.
 \begin{equation*}
     d_{sl}f=\frac{\partial f}{\partial \alpha}d\alpha+\frac{\partial f}{\partial \beta}d\beta.
    \end{equation*}
\end{proposition}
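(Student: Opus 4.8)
The plan is to unwind Definition \ref{slcfrm} and rewrite it in the real coordinates $(\alpha,\beta)$ which, since $D\subset\mathbb{C}^+$, genuinely parametrise $D$. Recall that $f\in\mathcal{S}^1(\Omega_D)$ means that the stem components $F_1,F_2:D\to\mathbb{H}$ are of class $\mathcal{C}^1$; hence on $D$ each of them admits the elementary expansion
\begin{equation*}
 dF_j=\frac{\partial F_j}{\partial\alpha}\,d\alpha+\frac{\partial F_j}{\partial\beta}\,d\beta,\qquad j=1,2 .
\end{equation*}
Here the one-form $d\beta$ is unambiguous precisely because we have restricted to $\beta>0$: this is exactly the delicate point flagged in the preceding remark, where we noted that $d\beta(\bar z)=-d\beta(z)$, which forces one to fix a sign once and for all before writing such formulas.

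Substituting these two identities into $d_{sl}f(\alpha+I\beta)=dF_1(\alpha+i\beta)+I\,dF_2(\alpha+i\beta)$ and collecting the terms in $d\alpha$ and in $d\beta$, I obtain, for $z=\alpha+i\beta$,
\begin{equation*}
 d_{sl}f=\left(\frac{\partial F_1}{\partial\alpha}(z)+I\frac{\partial F_2}{\partial\alpha}(z)\right)d\alpha+\left(\frac{\partial F_1}{\partial\beta}(z)+I\frac{\partial F_2}{\partial\beta}(z)\right)d\beta
\end{equation*}
on $\Omega_D\setminus\mathbb{R}$. It then remains only to recognise the two coefficients. Differentiating the defining identity $f(\alpha+I\beta)=F_1(\alpha+i\beta)+IF_2(\alpha+i\beta)$ with respect to $\alpha$ at fixed $\beta,I$ shows that the $d\alpha$-coefficient equals $\partial f/\partial\alpha$; differentiating with respect to $\beta$ at fixed $\alpha,I$ gives that the $d\beta$-coefficient equals $\partial f/\partial\beta$. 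Equivalently, on $D\subset\mathbb{C}^+$ the maps $\partial F_1/\partial\alpha+\sqrt{-1}\,\partial F_2/\partial\alpha$ and $\partial F_1/\partial\beta+\sqrt{-1}\,\partial F_2/\partial\beta$ are stem functions inducing $\partial f/\partial\alpha$ and $\partial f/\partial\beta$ respectively. Plugging this in yields $d_{sl}f=\frac{\partial f}{\partial\alpha}d\alpha+\frac{\partial f}{\partial\beta}d\beta$, as claimed.

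I do not expect a real obstacle here: the statement is essentially a bookkeeping identity, a direct consequence of the chain rule together with the $\mathbb{H}$-left affine form of a slice function. The only thing that genuinely needs attention — and the reason the hypothesis $D\subset\mathbb{C}^+$, i.e.\ $\beta>0$, is imposed — is the sign convention for $d\beta$ already discussed; without it the right-hand side would not even be well defined as a slice form.
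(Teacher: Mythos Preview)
Your proof is correct and follows essentially the same route as the paper: expand $dF_1$ and $dF_2$ in the coordinates $(\alpha,\beta)$, plug into Definition~\ref{slcfrm}, regroup, and identify the coefficients with $\partial f/\partial\alpha$ and $\partial f/\partial\beta$. Your added remarks on why the identification holds and on the role of the hypothesis $\beta>0$ are accurate and slightly more detailed than the paper's computation, but the argument is the same.
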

\begin{proof}
 The thesis follows from the following computations. Let $x=\alpha+I\beta\in\Omega_D$ and $z=\alpha+i\beta$, then
 \begin{equation*}
  \begin{array}{rcl}
   d_{sl}f(x) & = & \left(\frac{\partial F_1}{\partial \alpha}(z)d\alpha +\frac{\partial F_1}{\partial \beta}(z)d\beta\right)
   +I\left(\frac{\partial F_2}{\partial \alpha}(z)d\alpha +\frac{\partial F_2}{\partial \beta}(z)d\beta\right)\\
   & = & \left(\frac{\partial F_1}{\partial \alpha}(z)d\alpha +I\frac{\partial F_2}{\partial \beta}(z)d\beta\right)
   +\left(\frac{\partial F_1}{\partial \beta}(z)d\beta +I\frac{\partial F_2}{\partial \beta}(z)d\beta\right)\\
   & = & \frac{\partial f}{\partial \alpha}(x)d\alpha+\frac{\partial f}{\partial\beta}(x)d\beta.
  \end{array}
 \end{equation*}
 \end{proof}

It is clear from the definition that, if we choose the usual coordinate system, where $x=\alpha+I\beta$ with $\beta>0$, 
then $d_{sl}x=d\alpha +Id\beta$ and $d_{sl}x^c=d\alpha-Id\beta$.
We can now state the following theorem.

\begin{theorem}
 Let $f\in\mathcal{S}^1(\Omega_D)$. Then the following equality holds:
 \begin{equation*}
  d_{sl}x\frac{\partial f}{\partial x}(x)+d_{sl}x^c\frac{\partial f}{\partial x^c}(x)=d_{sl}f(x),\quad\forall x\in\Omega_D\setminus\mathbb{R}.
 \end{equation*}
\end{theorem}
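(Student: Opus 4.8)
The plan is to reduce the identity to the explicit coordinate expressions already obtained in this section. On $\Omega_D\setminus\mathbb{R}$ one has $d_{sl}x=d\alpha+Id\beta$ and $d_{sl}x^c=d\alpha-Id\beta$, and by the preceding proposition $d_{sl}f=\frac{\partial f}{\partial\alpha}\,d\alpha+\frac{\partial f}{\partial\beta}\,d\beta$. Since $d\alpha$ and $d\beta$ are real scalar one-forms, they commute with quaternionic multiplication, so writing $q_1:=\frac{\partial f}{\partial x}(x)$ and $q_2:=\frac{\partial f}{\partial x^c}(x)$ the left-hand side expands as
\[
(d\alpha+Id\beta)q_1+(d\alpha-Id\beta)q_2=(q_1+q_2)\,d\alpha+I(q_1-q_2)\,d\beta,
\]
where the factor $I$ must be kept to the left of $q_1-q_2$ because $\mathbb{H}$ is noncommutative. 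Hence it suffices to prove the two pointwise identities $q_1+q_2=\frac{\partial f}{\partial\alpha}(x)$ and $I(q_1-q_2)=\frac{\partial f}{\partial\beta}(x)$ on $\Omega_D\setminus\mathbb{R}$.

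First I would use $\frac{\partial f}{\partial x}=\mathcal{I}\bigl(\frac{\partial F}{\partial z}\bigr)$ and $\frac{\partial f}{\partial x^c}=\mathcal{I}\bigl(\frac{\partial F}{\partial\bar z}\bigr)$ together with the formulas for $\frac{\partial F}{\partial z}$ and $\frac{\partial F}{\partial\bar z}$ recalled in the preliminaries: since $\frac{\partial F}{\partial z}+\frac{\partial F}{\partial\bar z}=\frac{\partial F}{\partial\alpha}$ and $\frac{\partial F}{\partial z}-\frac{\partial F}{\partial\bar z}=-\sqrt{-1}\,\frac{\partial F}{\partial\beta}$, one gets $q_1+q_2=\mathcal{I}\bigl(\frac{\partial F}{\partial\alpha}\bigr)(x)=\frac{\partial F_1}{\partial\alpha}+I\frac{\partial F_2}{\partial\alpha}$, which is exactly $\frac{\partial f}{\partial\alpha}(x)$ after differentiating $f(\alpha+I\beta)=F_1(\alpha+i\beta)+IF_2(\alpha+i\beta)$ in $\alpha$. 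For the second identity, $q_1-q_2=\mathcal{I}\bigl(-\sqrt{-1}\,\frac{\partial F}{\partial\beta}\bigr)(x)$; expanding $-\sqrt{-1}\,\frac{\partial F}{\partial\beta}=\frac{\partial F_2}{\partial\beta}-\sqrt{-1}\,\frac{\partial F_1}{\partial\beta}$ shows that the induced value at $x=\alpha+I\beta$ equals $\frac{\partial F_2}{\partial\beta}-I\frac{\partial F_1}{\partial\beta}$, and multiplying on the left by $I$ and using $I^2=-1$ gives $\frac{\partial F_1}{\partial\beta}+I\frac{\partial F_2}{\partial\beta}=\frac{\partial f}{\partial\beta}(x)$. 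Substituting the two identities into the expansion above produces $d_{sl}f(x)$.

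The hard part will not be any deep step but the bookkeeping of noncommutativity: one must be consistent in keeping the imaginary unit $I$ coming from $d_{sl}x$ and $d_{sl}x^c$ on the left of the quaternionic coefficients, and the cancellation in the $d\beta$-term works precisely because left multiplication by $I$ on the slice side corresponds to multiplication by $\sqrt{-1}$ on the stem side, which is matched against the $-\sqrt{-1}$ in $\frac{\partial F}{\partial z}-\frac{\partial F}{\partial\bar z}$. Well-definedness of all the objects involved (independence of the representation $\alpha+I\beta=\alpha+(-I)(-\beta)$) is already guaranteed by the proposition showing Definition \ref{slcfrm} is well posed, so no extra parity argument is needed here.
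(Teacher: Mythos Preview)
Your proof is correct and follows essentially the same approach as the paper: a direct coordinate computation using $d_{sl}x=d\alpha+Id\beta$, $d_{sl}x^c=d\alpha-Id\beta$, and the explicit formulas for $\partial F/\partial z$ and $\partial F/\partial\bar z$. The only difference is organizational: the paper expands all terms at once and cancels, whereas you first separate the $d\alpha$ and $d\beta$ coefficients and then verify the two identities $q_1+q_2=\partial f/\partial\alpha$ and $I(q_1-q_2)=\partial f/\partial\beta$ via the stem-level relations $\partial_z F+\partial_{\bar z}F=\partial_\alpha F$ and $\partial_z F-\partial_{\bar z}F=-\sqrt{-1}\,\partial_\beta F$, which is slightly cleaner but not substantively different.
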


\begin{proof}
 The thesis is obtained after the following explicit computations:
 \begin{equation*}
  \begin{array}{rcl}
   d_{sl}x\displaystyle\frac{\partial f}{\partial x}+d_{sl}x^c\displaystyle\frac{\partial f}{\partial x^c} & = & \displaystyle\frac{1}{2}\left[(d\alpha+Id\beta)\left(\displaystyle\frac{\partial F_1}{\partial\alpha}+\displaystyle\frac{\partial F_2}{\partial \beta}-I\left(\displaystyle\frac{\partial F_1}{\partial\beta}-\displaystyle\frac{\partial F_2}{\partial\alpha}\right)\right)+\right.\\
   &  & + \left.(d\alpha-Id\beta)\left(\displaystyle\frac{\partial F_1}{\partial\alpha}-\displaystyle\frac{\partial F_2}{\partial \beta}+I\left(\displaystyle\frac{\partial F_1}{\partial\beta}+\displaystyle\frac{\partial F_2}{\partial\alpha}\right)\right)\right]\\
   & = & \displaystyle\frac{1}{2}\left[d\alpha \displaystyle\frac{\partial F_1}{\partial \alpha}+d\alpha\displaystyle\frac{\partial F_2}{\partial\beta}-Id\alpha\displaystyle\frac{\partial F_1}{\partial \beta}+Id\alpha\displaystyle\frac{\partial F_2}{\partial \alpha}\right.\\
   & & +Id\beta\displaystyle\frac{\partial F_1}{\partial\alpha}+Id\beta\displaystyle\frac{\partial F_2}{\partial\beta}+d\beta\displaystyle\frac{\partial F_1}{\partial \beta}-d\beta\displaystyle\frac{F_2}{\partial \alpha}+\\
   & & +d\alpha \displaystyle\frac{\partial F_1}{\partial \alpha}-d\alpha\displaystyle\frac{\partial F_2}{\partial\beta}+Id\alpha\displaystyle\frac{\partial F_1}{\partial \beta}+Id\alpha\displaystyle\frac{\partial F_2}{\partial \alpha}+\\
   & & \left.-Id\beta\displaystyle\frac{\partial F_1}{\partial\alpha}+Id\beta\displaystyle\frac{\partial F_2}{\partial\beta}+d\beta\displaystyle\frac{\partial F_1}{\partial \beta}+d\beta\displaystyle\frac{F_2}{\partial \alpha}\right]\\
   & = & d\alpha\displaystyle\frac{\partial F_1}{\partial \alpha}+Id\beta\displaystyle\frac{\partial F_2}{\partial\beta}+d\beta\displaystyle\frac{\partial F_1}{\partial \beta}+Id\alpha\displaystyle\frac{\partial F_2}{\partial\alpha}\\
   & = & \displaystyle\frac{\partial F_1}{\partial \alpha}d\alpha+\displaystyle\frac{\partial F_1}{\partial\beta}d\beta+I\left(\displaystyle\frac{\partial F_2}{\partial \alpha}d\alpha+\displaystyle\frac{\partial F_2}{\partial\beta}d\beta\right)\\
   & = & d_{sl}f.
  \end{array}
 \end{equation*}
 \end{proof}

We have then the obvious corollary:
\begin{corollary}
 Let $f\in\mathcal{SR}(\Omega_D)$. Then the following equality holds:
 \begin{equation*}
  d_{sl}x\frac{\partial f}{\partial x}(x)=d_{sl}f(x),\quad\forall x\in\Omega_D\setminus\mathbb{R}.
 \end{equation*}
\end{corollary}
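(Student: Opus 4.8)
The plan is simply to specialize the theorem proved immediately above. Since $\mathcal{SR}(\Omega_D)\subset\mathcal{S}^1(\Omega_D)$, that theorem applies to $f$ and yields, for every $x\in\Omega_D\setminus\mathbb{R}$, the identity
\begin{equation*}
d_{sl}x\,\frac{\partial f}{\partial x}(x)+d_{sl}x^c\,\frac{\partial f}{\partial x^c}(x)=d_{sl}f(x).
\end{equation*}
Next I would invoke the defining equation of slice regularity, namely $\partial f/\partial x^c(\alpha+J\beta)=0$ for all $\alpha+J\beta\in\Omega_D$ (equivalently $\partial F/\partial\bar z\equiv 0$), so that the second summand on the left-hand side vanishes identically on $\Omega_D\setminus\mathbb{R}$. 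What is left is precisely the asserted equality $d_{sl}x\,\partial f/\partial x(x)=d_{sl}f(x)$.

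There is essentially no obstacle here: the whole content sits in the preceding theorem, and the corollary is merely its restriction to slice regular functions. The only points worth recalling explicitly are that $d_{sl}x=d\alpha+Id\beta$ under the standing convention $\beta>0$, so the left-hand side is an unambiguous $\mathbb{H}^*$-valued expression, and that $\partial f/\partial x=\mathcal{I}(\partial F/\partial z)$ is the continuous slice derivative introduced earlier; with these identifications the deduction is immediate.
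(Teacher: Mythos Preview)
Your proof is correct and matches the paper's approach exactly: the paper presents this as an ``obvious corollary'' of the preceding theorem without further argument, and your specialization via $\partial f/\partial x^c\equiv 0$ is precisely the intended deduction.
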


Given $f\in\mathcal{S}^{1}(\Omega_{D})$ we have seen that it is possible to define its 
slice differential, considering, roughly speaking, the restriction of its real differential, outside of the real line, to each semi-slice. 
It is clear that this object does not exhaust the description of the real differential. What we are going to define here is the missing part.

\begin{definition}
Let $f\in\mathcal{S}^{1}(\Omega_{D})$. 
We define the \textit{spherical differential} of $f$ as the following differentiable form:
\begin{equation*}
\begin{array}{c}
d_{sp}f:\Omega_{D}\setminus\mathbb{R}\rightarrow \mathbb{H}^{*},\\
d_{sp}f(\alpha+J\beta):=d_{\mathbb{R}}f(\alpha+J\beta)-d_{sl}f(\alpha+J\beta),
\end{array}
\end{equation*}
where $d_{\mathbb{R}}f(\alpha+J\beta)$ denote the real differential of $f$.
\end{definition}

We will give now  a more explicit description of the spherical differential of a slice function.
Starting from equation \ref{diff}, we have that,
\begin{equation*}
 d_{sp}f=df_{\mathbb{R}}-d_{sl}f= \frac{1}{\beta}\left(\frac{\partial f}{\partial\vartheta}d\vartheta+\frac{1}{\sin\vartheta}\frac{\partial f}{\partial\varphi}d\varphi\right),   
\end{equation*}
but since, for every $\alpha+J\beta\in\Omega_D\setminus\mathbb{R}$, $f$ depends on $J=J(\vartheta,\varphi)$ in an affine way, then, 
\begin{equation*}
 d_{sp}f=\displaystyle\frac{1}{\beta}\left(\displaystyle\frac{\partial J}{\partial\theta} d\theta+\displaystyle\frac{1}{\sin\theta}\displaystyle\frac{\partial J}{\partial\varphi}d\varphi \right)F_{2}.
\end{equation*}
But if $g:\mathbb{H}\rightarrow\mathbb{H}$ is the identity function, ($g(\alpha+I\beta)=\alpha+I\beta$), then
\begin{equation*}
d_{\mathbb{R}}g|_{\mathbb{H}\setminus\mathbb{R}}=d\alpha+Id\beta+(dI)\beta=d_{sl}x+\displaystyle\frac{1}{\beta}\left(\displaystyle\frac{\partial I}{\partial\theta} d\theta+\displaystyle\frac{1}{\sin\theta}\displaystyle\frac{\partial I}{\partial\varphi}d\varphi \right)\beta = d_{sl}x+d_{sp}x,
\end{equation*}
and so,
\begin{equation*}
 d_{sp}f=d_{sp}x\partial_sf
 \end{equation*}

It seems then that, if $f\in\mathcal{SR}(\Omega_D)$, then its real differential satisfies the following equation:
\begin{equation}\label{reprediffeq}
df|_{\Omega_D\setminus\mathbb{R}}=d_{sl}x\displaystyle\frac{\partial f}{\partial x}+d_{sp}x\partial_sf,
\end{equation}
where, the position of the elements of the cotangent space is on the \textit{left}.

As the reader could object, the previous are only formal considerations but, in the next pages everything will be proved in the case of slice regular functions
(in particular see Corollary \ref{reprediff}).
We remember firstly the notion of spherical analyticity and its consequences.
\section{The real differential of a slice function}

In this section we will describe the real differential of a slice function. 
For this purpose, in addition to what we already discussed in the previous pages, we will remember some results and constructions due to 
Caterina Stoppato (see \cite{stoppato}). 
% Moreover, we will also use the concept of \textit{spherical differential} that will be introduced right now. 

\subsection{Coefficients of the spherical expansion}

In \cite{ghiloniperotti3,stoppato}, the authors introduce, in slightly different contexts, 
a spherical series of the form:
\begin{equation*}%\label{sphericalexpansion}
 f(x)=\sum_{n\in\mathbb{N}}\mathcal{S}_{y,n}(x)s_n,
\end{equation*}
that gave some interesting results. 
More precisely, for each $m\in\mathbb{N}$ we define, the slice regular polynomial functions:
\begin{equation*}%\label{sphpoly}
 \mathcal{S}_{y,2m}(x):=\Delta_y(x)^m,\quad\mathcal{S}_{y,2m+1}(x):=\Delta_y(x)^m(x-y).
\end{equation*}
Note that, since $\Delta_y$ is a real slice function, then $\Delta_y^{\cdot m}=\Delta_y^{m}$.
Series of type $\sum_{n\in\mathbb{N}}\mathcal{S}_{y,n}(x)s_n$ have convergence sets that are always
open with respect to the Euclidean topology. In particular, they are open with respect to the following 
 \textit{Cassini} pseudometric $u$.
If $x,y\in\mathbb{H}$, we set 
\begin{equation*}
 u(x,y):=\sqrt{\mid\mid \Delta_y(x)\mid\mid}.
\end{equation*}
The function $u$ turn out to be a pseudometric on $\mathbb{H}$, whose induced topology is strictly coarser than the Euclidean one.
A $u$-ball of radius $r$ centered in $y$ will be denoted
by $U(y,R):=\{x\in\mathbb{H}\,|\, u(x,y)<R\}$.
In \cite{stoppato,ghiloniperotti3} it
is showed that the sets of convergence of series $\sum_{n\in\mathbb{N}}\mathcal{S}_{y,n}(x)s_n$ are $u$-ball centered at $y$ (see fig. 8.1 in \cite{genstostru}) and it is proved a corresponding
Abel Theorem. Moreover in \cite{ghiloniperotti3}, formulas for computing the coefficients are given.
In this context, the following is the definition of analyticity.
For the following definition we refer to \cite{stoppato,ghiloniperotti3}.
\begin{definition}%\label{sphexp}
 Given a function $f:\Omega\rightarrow\mathbb{H}$ defined on a non-empty open circular subset $\Omega$ in $\mathbb{H}$, we say that $f$ is \textit{$u$-analytic} or \textit{spherical analytic}, if,
 for all $y\in\Omega$, there exists a non-empty $u$-ball $U$ centered at $y$ and contained in $\Omega$, and a series $\sum_{n\in\mathbb{N}}\mathcal{S}_{y,n}(x)s_n$
 with coefficients in $\mathbb{H}$, which converges to $f(x)$ for each $x\in U\cap\Omega$.
\end{definition}

We have the following expected result.
\begin{theorem}\label{sphexpa}
  Let $\Omega_D$ be a connected circular set and $f:\Omega_D\rightarrow\mathbb{H}$ be any function. The following assertions hold.
 \begin{enumerate}
  \item \textbf{(\cite{stoppato}, Corollary 4.3)} If $D\cap\mathbb{R}\neq\emptyset$, then$f$ is a slice regular function if and only if $f$ is a spherical analytic function.
  \item \textbf{(\cite{ghiloniperotti3}, Theorem 5.8)} If $D\cap\mathbb{R}=\emptyset$, then $f$ is a slice regular function if and only if $f$ is a spherical analytic slice function.
 \end{enumerate}
\end{theorem}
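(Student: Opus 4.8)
The statement asserts that slice regularity and spherical analyticity coincide, in both the real-points and the no-real-points setting, so the plan is to prove the two implications while keeping the two cases parallel.

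\emph{Spherical analytic $\Rightarrow$ slice regular.} Each building block $\mathcal{S}_{y,n}$ is a slice regular polynomial: $x-y=\mathcal{I}(z-y)$ is slice regular, $\Delta_y=N(x-y)$ is a \emph{real} slice regular function, hence $\Delta_y^{\cdot m}=\Delta_y^{m}$ and $\mathcal{S}_{y,2m+1}=\Delta_y^{m}\cdot(x-y)$ is a pointwise product of slice regular functions; right multiplication by the constant $s_n$ preserves regularity. So every partial sum $\sum_{n\le N}\mathcal{S}_{y,n}(x)s_n$ is slice regular. Given $y\in\Omega$ and a $u$-ball $U=U(y,R)\subseteq\Omega$ on which the series converges to $f$, one first notes that $U$ is circular: since $\Delta_y(\alpha+I\beta)=\Phi_1(z)+I\Phi_2(z)$ with $\Phi_1,\Phi_2$ real-valued, $\|\Delta_y(\alpha+I\beta)\|^{2}=\Phi_1(z)^2+\Phi_2(z)^2$ does not depend on $I$. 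By the Abel-type theorems of \cite{stoppato,ghiloniperotti3} the convergence is uniform on Euclidean-compact subsets of the Euclidean-open set $U$; hence $f$ is continuous on $U$, it satisfies the representation formula of Theorem~\ref{representationtheorem} (being a limit of functions that do), so it is a slice function $\mathcal{I}(\widehat F)$, and each restriction $f_I$ is a locally uniform limit of holomorphic maps, hence holomorphic, so $\widehat F$ is holomorphic and $f\in\mathcal{SR}(U)$. These local stem functions agree on overlaps (a slice function determines its stem), so they glue and $f\in\mathcal{SR}(\Omega)$; this covers both (1) and (2).

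\emph{Slice regular $\Rightarrow$ spherical analytic.} Fix $y=\alpha_0+J\beta_0\in\Omega_D$ and choose $J\in\mathbb{S}$ with $y\in D_J$ (any $J$ if $y\in\mathbb{R}$), so also $y^c\in D_J$. The restriction $f_J\colon D_J\to\mathbb{H}$ is holomorphic (by Lemma~\ref{splitting} one may, if preferred, work with two $\mathbb{C}_J$-valued holomorphic functions), and on $\mathbb{C}_J$ one has $\Delta_y(\zeta)=(\zeta-y)(\zeta-y^c)$, $\mathcal{S}_{y,2m}(\zeta)=\Delta_y(\zeta)^m$, $\mathcal{S}_{y,2m+1}(\zeta)=\Delta_y(\zeta)^m(\zeta-y)$. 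Everything then reduces to a one-variable fact: a holomorphic $\mathbb{H}$-valued function $g$ defined near the two conjugate nodes $y,y^c$ admits, on the largest Cassini sub-level set $\{\zeta:\|\Delta_y(\zeta)\|<R^2\}$ inside its domain, a convergent Newton--Hermite expansion $g(\zeta)=\sum_{m\ge0}\Delta_y(\zeta)^m\big(s_{2m}+(\zeta-y)s_{2m+1}\big)$ with $s_n\in\mathbb{H}$. One builds it recursively: $g(\zeta)=g(y)+(\zeta-y)g_1(\zeta)$ with $g_1$ holomorphic, then $g_1(\zeta)=g_1(y^c)+(\zeta-y^c)g_2(\zeta)$, yielding $g=s_0+(\zeta-y)s_1+\Delta_y\,g_2$ with $s_0=g(y)$ and $s_1=(y^c-y)^{-1}(g(y^c)-g(y))$; iterating gives the $N$-th partial sum with remainder $\Delta_y(\zeta)^N g_{2N}(\zeta)$, all $g_{2N}$ holomorphic on the original domain. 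The main obstacle is the convergence estimate for this remainder; I would control it by a Hermite-type contour integral $g_{2N}(\zeta)=\frac{1}{2\pi\sqrt{-1}}\oint_{\partial K}\frac{g(w)}{\Delta_y(w)^N}\,L_N(w,\zeta)\,dw$ over the boundary of a slightly larger Cassini set $K=\{\|\Delta_y\|\le R^2\}\subset D_J$, with $L_N$ a uniformly bounded rational factor, obtaining $\|\Delta_y(\zeta)^N g_{2N}(\zeta)\|\le C\,(\|\Delta_y(\zeta)\|/R^2)^N\to0$ uniformly on compact subsets of $\{\|\Delta_y\|<R^2\}$. When $y\in\mathbb{R}$ the nodes collapse, $\Delta_y(\zeta)=(\zeta-y)^2$, and this is just the Taylor expansion of $f_J$ on a disc --- Stoppato's case.

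To finish, apply this to $g=f_J$ (with $R$ small enough that $\{\zeta\in\mathbb{C}_J:\|\Delta_y(\zeta)\|\le R^2\}\subset D_J$), obtaining $s_n\in\mathbb{H}$, and set $h(x):=\sum_{m\ge0}\mathcal{S}_{y,m}(x)s_m$. By the first implication $h\in\mathcal{SR}(U(y,R))$, and by construction $h$ and $f$ have the same restriction to the slice $\mathbb{C}_J\cap U(y,R)$; hence they agree on the two semislices $\mathbb{C}_J^+\cap U$ and $\mathbb{C}_{-J}^+\cap U$ (if $y\notin\mathbb{R}$ and $R$ is small, so that $U\cap\mathbb{R}=\emptyset$) or on a slice meeting $\mathbb{R}$ (if $y\in\mathbb{R}$). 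In either case the Identity Principle, Theorem~\ref{identity}, forces $h\equiv f$ on $U(y,R)$, so $f$ is spherical analytic; if $D\cap\mathbb{R}=\emptyset$ then $f$ is a slice function by hypothesis, so part (2) needs nothing further. The two delicate points are exactly those flagged above: that a locally uniform limit of slice regular functions is again slice regular (handled via the representation formula), and the quantitative convergence of the conjugate-node Newton--Hermite series, which is the genuine analytic core of the result.
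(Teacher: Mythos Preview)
The paper does not supply a proof of this theorem at all: it is stated purely as a citation of \cite[Corollary~4.3]{stoppato} and \cite[Theorem~5.8]{ghiloniperotti3}, and the author uses it as a black box. So there is no ``paper's own proof'' to compare against.

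Your sketch is essentially the standard argument of those references and is correct in outline. The forward direction (spherical analytic $\Rightarrow$ slice regular) is handled exactly as one should: partial sums are slice regular, Abel-type convergence gives local uniform convergence on the open $u$-ball, and passing the representation formula and slice-wise holomorphy through the limit yields slice regularity. For the converse you reproduce the divided-difference (Newton--Hermite) construction at the conjugate nodes $y,y^c$ that Stoppato and Ghiloni--Perotti use, together with a Cauchy/Hermite integral remainder estimate. Two small points worth tightening: in the contour integral you should write the imaginary unit of the slice, $J$, rather than an abstract $\sqrt{-1}$, and justify the formula via the splitting $f_J=f_1+f_2K$ so that ordinary complex Cauchy theory applies componentwise; and your kernel $L_N(w,\zeta)$ is simply $(w-\zeta)^{-1}$, so the uniform boundedness on $\partial K$ for $\zeta$ in a compact subset of the interior is immediate. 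With those cosmetic fixes your argument matches the cited proofs.
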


Given a slice regular function $f\in\mathcal{SR}(\Omega_D)$, the methods described in
\cite{stoppato,ghiloniperotti3} to compute its spherical coefficients $\{s_n\}$
at a fixed point, allow a correct explanation and interpretation only for the first two
(see, e.g., \cite{ghiloniperotti3}, formula 30):
\begin{equation*}
 \begin{array}{rcl}
  s_1 & = & \displaystyle\frac{1}{2}Im(y)^{-1}(f(y)-f(y^c))=\partial_sf(y)\\
  & & \\
  s_2 & = & \displaystyle\frac{1}{2}Im(y)^{-2}(2Im(y)\displaystyle\frac{\partial f}{\partial x}(y)-f(y)+f(y^c)),
 \end{array}
\end{equation*}
and in particular 
\begin{equation*}
 s_1+2Im(y)s_2=\frac{\partial f}{\partial x}(y).
\end{equation*}

The following proposition, which has an independent interest, allows us to understand better the nature of $s_{2}$.
\begin{proposition}
Let $f\in\mathcal{SR}(\Omega_{D})$ be a slice regular function, then the following formula holds:
\begin{equation}\label{formula}
\frac{\partial f}{\partial x}(x)=2Im(x)\left(\frac{\partial}{\partial x}\partial_{s}f\right)(x)+\partial_{s}f(x),\quad\forall x=\alpha+J\beta\in\Omega_{D}.
\end{equation}
\end{proposition}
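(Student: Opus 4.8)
The plan is to verify (\ref{formula}) by computing both sides explicitly in terms of the components $F_1,F_2$ of the holomorphic stem function $F=F_1+\sqrt{-1}F_2$ that induces $f$. First I would carry this out on $\Omega_D\setminus\mathbb{R}$, where every object in sight is given by an elementary formula, and then pass to the (possible) real points of $\Omega_D$ by a density/continuity argument.

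For the right-hand side, recall from Remark~\ref{spherical} that $\partial_s f=\mathcal{I}(G)$, where $G=G_1+\sqrt{-1}G_2$ has $G_1(z)=F_2(z)/Im(z)$ and $G_2\equiv0$; this is a legitimate stem function since $F_2$ and $Im(z)=\beta$ are both odd in $\beta$, so $G_1$ is even. Feeding $G$ into the explicit expression for $\partial/\partial z$ and applying $\mathcal{I}$ gives, for $x=\alpha+J\beta$ and $z=\alpha+i\beta$,
\[
\left(\frac{\partial}{\partial x}\partial_s f\right)(x)=\frac{1}{2\beta}\frac{\partial F_2}{\partial\alpha}(z)+J\left(\frac{1}{2\beta^2}F_2(z)-\frac{1}{2\beta}\frac{\partial F_2}{\partial\beta}(z)\right).
\]
Multiplying on the left by $2\,Im(x)=2\beta J$ — here the only subtlety is that $\beta$ is a real scalar and hence central, whereas $J^2=-1$ — the contribution of $J\cdot J\bigl(\tfrac{1}{2\beta^2}F_2(z)\bigr)$ becomes $-\tfrac1\beta F_2(z)$, which cancels precisely against $\partial_s f(x)=\tfrac1\beta F_2(z)$ when the latter is added, leaving
\[
2\,Im(x)\left(\frac{\partial}{\partial x}\partial_s f\right)(x)+\partial_s f(x)=\frac{\partial F_2}{\partial\beta}(z)+J\,\frac{\partial F_2}{\partial\alpha}(z).
\]

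For the left-hand side, $\frac{\partial f}{\partial x}(x)=\mathcal{I}\left(\frac{\partial F}{\partial z}\right)(x)$; since $f$ is slice regular, $F$ is holomorphic, so the Cauchy--Riemann equations $\frac{\partial F_1}{\partial\alpha}=\frac{\partial F_2}{\partial\beta}$, $\frac{\partial F_1}{\partial\beta}=-\frac{\partial F_2}{\partial\alpha}$ collapse the explicit formula for $\frac{\partial F}{\partial z}$ to $\frac{\partial F_2}{\partial\beta}+\sqrt{-1}\frac{\partial F_2}{\partial\alpha}$, whence $\frac{\partial f}{\partial x}(x)=\frac{\partial F_2}{\partial\beta}(z)+J\frac{\partial F_2}{\partial\alpha}(z)$ as well. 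Comparing the two displays proves (\ref{formula}) on $\Omega_D\setminus\mathbb{R}$. To include real points (an issue only if $\Omega_D\cap\mathbb{R}\neq\emptyset$), note that $F$ is real-analytic and $F_2$ is odd in $\beta$, so $F_2(\alpha,\beta)=\beta\,g(\alpha,\beta^2)$ with $g$ real-analytic; hence $G_1=F_2/Im(z)=g(\alpha,\beta^2)$ extends real-analytically across $\beta=0$, both sides of (\ref{formula}) are continuous slice functions on all of $\Omega_D$ and they agree on the dense set $\Omega_D\setminus\mathbb{R}$, hence everywhere. (One may also check the real case directly: there the identity reads $\frac{\partial f}{\partial x}(\alpha)=\partial_s f(\alpha)$, and both sides equal $\frac{\partial F_2}{\partial\beta}(\alpha,0)$, since $F_2(\alpha,0)=0$ forces $\frac{\partial F_2}{\partial\alpha}(\alpha,0)=0$.)

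The computation is routine; the only place demanding attention is the non-commutativity in the step multiplying by $2\,Im(x)$, where $Im(x)$ (equivalently $J$) must be kept on the left throughout, exactly as in the slice product, and the two $F_2/\beta$-terms must be seen to cancel. As a byproduct the argument identifies $\frac{\partial}{\partial x}\partial_s f$ with a constant multiple of the second spherical coefficient $s_2$ of $f$ at $x$, which is the promised clarification of the nature of $s_2$; alternatively one could deduce (\ref{formula}) from the expression for $s_2$ recalled earlier, but the stem-function computation seems the most transparent.
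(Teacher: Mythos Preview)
Your proof is correct and follows essentially the same route as the paper's: both arguments reduce each side of the identity, via the stem function $G=F_2/\beta$ inducing $\partial_s f$ together with the Cauchy--Riemann equations for $F$, to the common expression $\frac{\partial F_2}{\partial\beta}(z)+J\frac{\partial F_2}{\partial\alpha}(z)$, and then extend to real points by continuity. The only cosmetic difference is that the paper starts from the left-hand side and transforms it into the right, whereas you compute the two sides separately and match them; your treatment of the real case is also slightly more explicit.
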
last

\begin{proof}
Let $F=F_{1}+\sqrt{-1}F_{2}$ the inducing stem function of $f$ and let $x=\alpha+J\beta\in\Omega_{D}\setminus\mathbb{R}$ and $z=\alpha+i\beta$, then,
\begin{equation*}
\displaystyle\frac{\partial f}{\partial x}(x)  =  \displaystyle\frac{1}{2}\left(\displaystyle\frac{\partial F_{1}}{\partial\alpha}(z)+
J\displaystyle\frac{\partial F_{2}}{\partial\alpha}(z)-J\displaystyle\frac{\partial F_{1}}{\partial\beta}(z)+
\displaystyle\frac{\partial F_{2}}{\partial\beta}(z)\right)=\circledast.
\end{equation*}
Using the slice regularity we have,
\begin{equation*}
 \circledast  = \displaystyle\frac{\partial F_{2}}{\partial\beta}(z)+J\displaystyle\frac{\partial F_{2}}{\partial\alpha}(z)=2J\left[\frac{1}{2}\left(\displaystyle\frac{\partial F_{2}}{\partial \alpha}(z)-J\displaystyle\frac{\partial F_{2}}{\partial \beta}(z)\right)\right](x).
\end{equation*}
Now $F_{2}(z)=\beta G(z)$, with $G=\left(F_2(z)/\beta\right)$ the stem function that induces the spherical derivative, then the last equation is equal to
\begin{equation*}
\begin{array}{rcl}
& & 2J\left[\displaystyle\frac{1}{2}\left(\beta\displaystyle\frac{\partial G}{\partial\alpha}(z)-J\beta\displaystyle\frac{\partial G}{\partial \beta}(z)-JG(z)\right)\right]=\\
& = & G(z)+2J\beta\left(\displaystyle\frac{1}{2}\left(\displaystyle\frac{\partial G}{\partial\alpha}(z)-J\displaystyle\frac{\partial G}{\partial\beta}(z)\right)\right)\\
& = & \partial_s f(x)+2Im(x)\left(\frac{\partial}{\partial x}\partial_{s}f\right)(x),
\end{array}
\end{equation*}
where of course, in the last equality $\partial_s f$ and $\frac{\partial}{\partial x}\partial_{s}f$ are the slice functions induced by $G$ and 
$\frac{1}{2}(\frac{\partial G}{\partial\alpha}-J\frac{\partial G}{\partial\beta})$ respectively.

At this point we have proven the theorem in the case in which the point $x$ is not real. Now, if the function $f$ is defined also on the real line,
then, thanks to slice regularity we have, in particular,  that $f$ is of class $\mathcal{C}^\infty$. Therefore, recalling Remark \ref{spherical}, we have that
the spherical derivative and its slice derivative extends continuously to the real line and the proof of the theorem is concluded.
 \end{proof}
\begin{remark}
Since the previous theorem holds for any $x_{0}\in\Omega_{D}$, then , if $x_{0}\in\mathbb{R}$,
then we have that
$\frac{\partial f}{\partial x}(x_{0})=\partial_{s}f(x_{0})$.
\end{remark}

\begin{corollary}
Let $f\in\mathcal{SR}(\Omega_{D})$ be a slice regular function with spherical expansion 
$ f(x)=\sum_{n\in\mathbb{N}}\mathcal{S}_{y,n}(x)s_n$ centered in $x_{0}\in\Omega_{D}$ then,
\begin{equation*}
s_{2}=\frac{\partial}{\partial x}(\partial_{s}f)(x_{0}).
\end{equation*}
\end{corollary}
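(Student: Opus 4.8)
The plan is to read the identity off directly from formula~(\ref{formula}), combined with the two expressions for the first spherical coefficients recalled just before the Proposition, namely $s_1=\partial_s f(x_0)$ and $s_1+2\,Im(x_0)s_2=\frac{\partial f}{\partial x}(x_0)$.

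First I would handle the case $x_0\in\Omega_D\setminus\mathbb{R}$. Evaluating~(\ref{formula}) at $x=x_0$ gives $\frac{\partial f}{\partial x}(x_0)=2\,Im(x_0)\left(\frac{\partial}{\partial x}\partial_s f\right)(x_0)+\partial_s f(x_0)$, whereas the recalled relations give $\frac{\partial f}{\partial x}(x_0)=\partial_s f(x_0)+2\,Im(x_0)s_2$. Subtracting $\partial_s f(x_0)$ from each identity and comparing, one gets $2\,Im(x_0)s_2=2\,Im(x_0)\left(\frac{\partial}{\partial x}\partial_s f\right)(x_0)$; since $x_0\notin\mathbb{R}$ the quaternion $Im(x_0)$ is nonzero, hence invertible, and multiplying on the left by $\tfrac12 Im(x_0)^{-1}$ yields $s_2=\left(\frac{\partial}{\partial x}\partial_s f\right)(x_0)$.

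The remaining case $x_0\in\mathbb{R}$ is the only delicate point, precisely because there $Im(x_0)=0$ and the relation $s_1+2\,Im(x_0)s_2=\frac{\partial f}{\partial x}(x_0)$ says nothing about $s_2$. One way is to note that for a real center $\mathcal{S}_{x_0,n}(x)=(x-x_0)^n$, so the spherical expansion at $x_0$ is the ordinary power series and, by Lemma~\ref{lemma2.1}, $s_2=\tfrac12\frac{\partial^2 f}{\partial x^2}(x_0)$; then I would differentiate~(\ref{formula}) once more in $x$ — legitimate because $\frac{\partial}{\partial x}$ is a derivation for the slice product and $\frac{\partial}{\partial x}Im(x)=\tfrac12$, while $Im(x)$ is a real slice function so its slice product with anything is the pointwise one — obtaining $\frac{\partial^2 f}{\partial x^2}(x)=2\left(\frac{\partial}{\partial x}\partial_s f\right)(x)+2\,Im(x)\left(\frac{\partial^2}{\partial x^2}\partial_s f\right)(x)$, and evaluating at $x_0\in\mathbb{R}$ kills the last term, giving $s_2=\left(\frac{\partial}{\partial x}\partial_s f\right)(x_0)$ again. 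Alternatively, since $f$ is $\mathcal{C}^\infty$ and, by Remark~\ref{spherical}, both $\partial_s f$ and $\frac{\partial}{\partial x}\partial_s f$ extend continuously across $\mathbb{R}$, the real case can be recovered as a limit of the non-real one. Thus the substance of the proof is a one-line cancellation in the non-real case, and the only genuine work is this bit of bookkeeping at real centers (that the spherical series collapses to the Taylor series there, and that Leibniz applies to $2\,Im(x)\cdot\frac{\partial}{\partial x}\partial_s f$).
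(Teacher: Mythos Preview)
Your argument is correct and matches the paper's intended proof: the corollary is stated there without proof, as an immediate consequence of comparing formula~(\ref{formula}) with the recalled identity $s_1+2\,Im(x_0)s_2=\frac{\partial f}{\partial x}(x_0)$, which is exactly your non-real case. Your extra care with real centers (via the Taylor collapse and the Leibniz differentiation of~(\ref{formula})) is a nice addition that the paper leaves implicit.
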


\subsection{Rank of the real differential of a slice regular function}

In \cite{gensalsto,stoppato}, the authors shown the following theorem.
\begin{theorem}[\cite{stoppato}, Theorem 6.1]
 Let $f\in\mathcal{SR}(\Omega_D)$ and $x=\alpha+J\beta\in\Omega_D$. For all $v\in\mathbb{H}$, $||v||=1$, it holds
 \begin{equation*}
  \lim_{t\rightarrow 0}\frac{f(x+tv)-f(x)}{t}=vs_1+(xv-vx^c)s_2,
 \end{equation*}
where $s_1$ and $s_2$ are the first two coefficients of the spherical expansion of $f$.
\end{theorem}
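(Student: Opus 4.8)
The left-hand side of the statement is nothing but the real directional derivative of $f$ at $x$ along $v$. Since a slice regular function is of class $\mathcal{C}^1$ (in fact $\mathcal{C}^\infty$), this limit exists and equals $d_{\mathbb{R}}f(x)(v)=\frac{d}{dt}\big|_{t=0}f(x+tv)$, so the whole task is to compute this single derivative. The plan is to do so by feeding the curve $t\mapsto x+tv$ into the spherical expansion of $f$ centered at the point $x$ itself, since only the first two coefficients of that expansion will contribute to the linear term in $t$.

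By Theorem \ref{sphexpa} (in whichever of its two cases applies to $\Omega_D$) the function $f$ is spherical analytic, so there are a radius $R>0$, a $u$-ball $U(x,R)\subseteq\Omega_D$, and coefficients $s_n\in\mathbb{H}$ with $f(w)=\sum_{n\in\mathbb{N}}\mathcal{S}_{x,n}(w)s_n$ on $U(x,R)$, where by the formulas recalled above $s_0=f(x)$, $s_1=\partial_s f(x)$ and $s_2=\frac{\partial}{\partial x}(\partial_s f)(x)$. As the Cassini pseudometric $u$ is continuous, $U(x,R)$ is a Euclidean neighbourhood of $x$, hence $x+tv\in U(x,R)$ for $|t|$ small. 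The elementary identity on which everything rests is
\[
\Delta_x(x+tv)=(x+tv)^2-(x+tv)(x+x^c)+xx^c=t\bigl((xv-vx^c)+tv^2\bigr),
\]
which, together with $\mathcal{S}_{x,2m}(w)=\Delta_x(w)^m$ and $\mathcal{S}_{x,2m+1}(w)=\Delta_x(w)^m(w-x)$, gives $\mathcal{S}_{x,0}(x+tv)=1$, $\mathcal{S}_{x,1}(x+tv)=tv$, $\mathcal{S}_{x,2}(x+tv)=t(xv-vx^c)+t^2v^2$ and $\mathcal{S}_{x,n}(x+tv)=O(t^2)$ for every $n\geq 3$. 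Separating the first three terms and dividing by $t$ yields
\[
\frac{f(x+tv)-f(x)}{t}=vs_1+(xv-vx^c)s_2+tv^2s_2+\frac1t\sum_{n\geq 3}\mathcal{S}_{x,n}(x+tv)\,s_n ,
\]
and it only remains to see that the last two summands go to $0$ as $t\to0$.

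The single point that is not formal is the control of the tail, and I expect it to be the main obstacle. From the identity above one gets $\|\Delta_x(x+tv)\|\leq c|t|$ with $c:=\|xv-vx^c\|+1$ for $|t|\leq1$, and then, by multiplicativity of the quaternionic norm, $\|\mathcal{S}_{x,n}(x+tv)\|\leq (c|t|)^{n/2}$ for all $n\geq 2$ and $|t|\leq 1$. On the other hand, the Abel-type convergence theorem for spherical series (see \cite{stoppato,ghiloniperotti3}) gives a geometric bound $\|s_n\|\leq C\lambda^n$ for suitable $C,\lambda>0$. Hence, for $|t|$ small, $\big\|\sum_{n\geq 3}\mathcal{S}_{x,n}(x+tv)s_n\big\|\leq C\sum_{n\geq 3}\bigl(\lambda\sqrt{c|t|}\bigr)^n=O(|t|^{3/2})$, so after division by $t$ this tail still tends to $0$; and trivially $tv^2s_2\to0$. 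Passing to the limit in the previous display then gives $\lim_{t\to0}\frac{f(x+tv)-f(x)}{t}=vs_1+(xv-vx^c)s_2$, as desired. Note that no case distinction between real and non-real $x$ is needed, because the spherical expansion centered at $x$ is available in both cases of Theorem \ref{sphexpa}; apart from the tail estimate and this uniformity, the argument is just the displayed algebraic identity combined with the explicit shape of the polynomials $\mathcal{S}_{x,n}$.
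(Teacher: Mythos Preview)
Your argument is correct and is essentially the one the paper has in mind: the paper does not give its own proof of this statement but refers to \cite{stoppato}, remarking explicitly that ``the only feature needed for the proof is, in fact, the existence, for every slice regular function, of a spherical expansion'' --- which is precisely the ingredient you use. The algebraic identity $\Delta_x(x+tv)=t\bigl((xv-vx^c)+tv^2\bigr)$ together with the Cauchy--Hadamard type bound $\|s_n\|\leq C\lambda^n$ coming from the Abel theorem in \cite{stoppato,ghiloniperotti3} is exactly how the tail is handled in Stoppato's original proof, so your proposal matches the cited approach.
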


The previous theorem has an important corollary (that is stated implicitly in \cite{gensalsto},
Section 3), which justifies explicitly the formal considerations in the introduction of Section 4.
\begin{corollary}\label{reprediff}
 Let $f\in\mathcal{SR}(\Omega_D)$ and let $(df)_x$ denote the real differential of $f$ at $x=\alpha+J\beta\in\Omega_D$. If we identify $T_x\mathbb{H}$ with
 $\mathbb{H}=\mathbb{C}_J\oplus \mathbb{C}_J^\bot$, then for all $v_1\in\mathbb{C}_J$ and $v_2\in\mathbb{C}_J^\bot$,
 \begin{equation*}
  (df)_x(v_1+v_2)=v_1\frac{\partial f}{\partial x}(x)+v_2\partial_sf(x).
 \end{equation*}
\end{corollary}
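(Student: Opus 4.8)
The plan is to read the statement off the preceding Theorem [\cite{stoppato}, Theorem 6.1], combined with the two facts already recorded above: that the first coefficient of the spherical expansion of $f$ centred at $y$ is $s_1=\partial_s f(y)$, and that $s_1+2Im(y)s_2=\frac{\partial f}{\partial x}(y)$. The spherical expansion appearing in Theorem 6.1 is the one centred at the very point $x$ under consideration, so here $s_1=\partial_s f(x)$ and $s_1+2Im(x)s_2=\frac{\partial f}{\partial x}(x)$.

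The first step is to observe that, since a slice regular function is of class $\mathcal{C}^\infty$ and hence real-differentiable, the limit appearing in Theorem 6.1 is precisely the value $(df)_x(v)$ of the real differential on $v$. Moreover the right-hand side $vs_1+(xv-vx^c)s_2$ is manifestly $\mathbb{R}$-linear in $v$, in agreement with the linearity of $(df)_x$; consequently the identity
\[
(df)_x(v)=vs_1+(xv-vx^c)s_2
\]
holds for \emph{every} $v\in\mathbb{H}$, and it suffices to evaluate it separately on $v_1\in\mathbb{C}_J$ and on $v_2\in\mathbb{C}_J^\bot$ and add.

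Assume first $x\notin\mathbb{R}$. If $v_1\in\mathbb{C}_J=\mathrm{span}_{\mathbb{R}}\{1,J\}$, then $v_1$ commutes with both $x$ and $x^c$, so $xv_1-v_1x^c=v_1(x-x^c)=2\,Im(x)\,v_1$, using in addition that $Im(x)=J\beta$ commutes with $v_1$; substituting, $(df)_x(v_1)=v_1 s_1+v_1\bigl(2\,Im(x)\,s_2\bigr)=v_1\bigl(s_1+2\,Im(x)\,s_2\bigr)=v_1\frac{\partial f}{\partial x}(x)$. If instead $v_2\in\mathbb{C}_J^\bot$, then every element of $\mathbb{C}_J^\bot$ anticommutes with $J$, whence $qv_2=v_2q^c$ for all $q\in\mathbb{C}_J$; taking $q=x$ gives $xv_2=v_2x^c$, so the second summand vanishes and $(df)_x(v_2)=v_2 s_1=v_2\,\partial_s f(x)$. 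Adding the two contributions and invoking the $\mathbb{R}$-linearity of $(df)_x$ relative to $T_x\mathbb{H}\cong\mathbb{C}_J\oplus\mathbb{C}_J^\bot$ gives the stated formula. Finally, for $x\in\mathbb{R}$ one has $xv-vx^c=0$ for all $v$ (real quaternions are central), $\frac{\partial f}{\partial x}(x)=\partial_s f(x)$ as noted after formula \ref{formula}, and $J$ may be chosen arbitrarily; the formula then reduces to $(df)_x(v)=v\,\frac{\partial f}{\partial x}(x)$, consistently with the non-real case.

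I do not anticipate a genuine obstacle here: this is really a corollary. The only points that require care are the identification of the limit in Theorem 6.1 with $(df)_x$ (so that its $\mathbb{R}$-linearity lets us split the argument into the $\mathbb{C}_J$ and $\mathbb{C}_J^\bot$ pieces), and the noncommutative simplification of $xv-vx^c$ — which commutes away on $\mathbb{C}_J$ and vanishes on $\mathbb{C}_J^\bot$ because those vectors anticommute with $J$ — after which the already-established expressions for $s_1$ and $s_1+2Im(x)s_2$ finish the proof.
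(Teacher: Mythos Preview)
Your proof is correct and is essentially the approach the paper intends: the paper does not give a separate proof but points out that Stoppato's argument carries over verbatim, and your derivation from Theorem~6.1 via the computation of $xv-vx^c$ on $\mathbb{C}_J$ and $\mathbb{C}_J^\bot$ is exactly that argument. One micro-remark: when $v_1\in\mathbb{C}_J$ you don't actually need to commute $2\,Im(x)$ past $v_1$; the expression $v_1\cdot 2\,Im(x)$ already sits in the right place to give $v_1\bigl(s_1+2\,Im(x)\,s_2\bigr)$.
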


We will not give a proof of the previous theorem (and corollary) since the one in 
\cite{stoppato} does not use the additional hypothesis of nonempty intersection between
the domain and the real axis. The only feature needed for the proof is, in fact, 
the existence, 
for every slice regular function, of a spherical expansion.
But, as we state in Theorem \ref{sphexpa}, this is true also if the domain of definition 
of $f$ does not intersects the real line (cf. \cite{ghiloniperotti3}, Theorem 1.8).

The last corollary explains the last part of the previous section. We underline, in fact, the analogy 
between the representation of the real differential of a slice regular function given in Corollary \ref{reprediff}
and the one given in equation \ref{reprediffeq}.

We now want to study the rank of a slice regular function. In \cite{gensalsto} the authors proved that an injective slice regular function
defined over a circular domain with real points, has invertible differential. The aim of the following pages is to extend this result to all
slice regular functions. 
Let's start with a general result.

\begin{proposition}[\cite{gensalsto}, Proposition 3.3]\label{rank}
 Let $f\in\mathcal{SR}(\Omega_D)$ and $x_0=\alpha+J\beta\in\Omega_D\setminus\mathbb{R}$.
 \begin{itemize}
  \item If $\partial_sf(x_0)=0$ then:
  \begin{itemize}
   \item $df_{x_0}$ has rank 2 if $\frac{\partial f}{\partial x}(x_0)\neq 0$;
   \item $df_{x_0}$ has rank 0 if $\frac{\partial f}{\partial x}(x_0)= 0$.
  \end{itemize}
  \item If $\partial_sf(x_0)\neq0$ then $df_{x_0}$ is not invertible at $x_0$ if and only if 
  \begin{equation}\label{singcoeff}
  \frac{\partial f}{\partial x}(x_0)(\partial_sf(x_0))^{-1}\in\mathbb{C}_J^\bot.
  \end{equation}
 \end{itemize}
Let now $\alpha\in\Omega_D\cap \mathbb{R}$. $df_{x_0}$ is invertible at $\alpha$ if and only if its rank is not 0 at $x_0=\alpha +J\beta$. This happens if and only if
$\partial_sf(x_0)\neq 0$.
\end{proposition}

The proof of the previous statement can be found (with the appropriate change of notation), in \cite{gensalsto} or in \cite{genstostru}.
However, in our version of the statement, the last part involving the quantity in equation \ref{singcoeff} is much more clear and
directly computable.

\begin{remark}
 As the previous theorem states, the rank of the real differential a slice regular function is always an even number.
 This fact was also pointed out in \cite{perotti3D}, Corollary 4. Moreover it says
  that Theorem \ref{thmsurf} is optimal, meaning that no three-dimensional submanifold
  can be contained in the zero locus of a slice regular function.
\end{remark}

\begin{definition}
 Let $f:\Omega\rightarrow\mathbb{H}$ any quaternionic function of quaternionic variable. We define the singular set of $f$ as
 \begin{equation*}
  N_f:=\{x\in\Omega\,|\,df\mbox{ is not invertible at }x\}.
 \end{equation*}
\end{definition}

\begin{remark}%\label{rksurface}
 If a slice regular function $f\in\mathcal{SR}(\Omega_D)$ is constant on  
 a surface $S$, then $S\subset N_f$. This is obvious if $S$ is in the degenerate set, but if $S$
 is not a degenerate sphere then this is true as well.
 If $S$ is a semi-slice $D_{I}^{+}$ for some $I\in\mathbb{S}$, then the slice derivative
 of $f$ on that semi-slice is everywhere zero and so $S\subset N_{f}$.
 Suppose now that $S$ is not in the degenerate set nor a semi-slice and $f|_S\equiv 0$. Then $N(f)\equiv 0$ and this equality translates in the system 
 in equation \ref{surfzero}.
 Deriving the first equation of system \ref{surfzero} with respect to $\beta$ and the second with respect to $\alpha$ we obtain,
 for each $z\in D$,
 \begin{equation*}
 \begin{cases}
 g(\frac{\partial F_{1}}{\partial\beta}(z),F_{1}(z))-g(\frac{\partial F_{2}}{\partial\beta}(z),F_{2}(z))=0\\
  g(\frac{\partial F_{1}}{\partial\alpha}(z),F_{2}(z))+g(\frac{\partial F_{2}}{\partial\alpha}(z),F_{1}(z))=0.
 \end{cases}
 \end{equation*}
If now $x_{0}=\alpha_{0}+I_{0}\beta_{0}\in S$ and $z_{0}=\alpha_{0}+i\beta_{0}\in D$, then
$f(x_{0})=0$, and so, if $S$ is not degenerate, $F_{1}(z_{0})=-I_{0}F_{2}(z_{0})$.
Evaluating the previous system in $z_{0}$ we obtain
\begin{equation*}
 \begin{cases}
 g(\frac{\partial F_{1}}{\partial\beta}(z_{0}),-I_{0}F_{2}(z_{0}))-g(\frac{\partial F_{2}}{\partial\beta}(z_{0}),F_{2}(z_{0}))=0\\
  g(\frac{\partial F_{1}}{\partial\alpha}(z_{0}),F_{2}(z))+g(\frac{\partial F_{2}}{\partial\alpha}(z_{0}),-I_{0}F_{2}(z_{0}))=0,
 \end{cases}
 \end{equation*}
 and, using regularity and the fact that, for any $p,q,r\in\mathbb{H}$, $g(pq,r)=g(q,p^{c}r)$,
 we get,
 \begin{equation*}
  \begin{cases}
g(I_{0}(\frac{\partial F_{1}}{\partial\beta}(z_{0})+I_{0}\frac{\partial F_{2}}{\partial\beta}(z_{0})),F_{2}(z_{0}))=0\\
g(I_{0}(\frac{\partial F_{1}}{\partial\alpha}(z_{0})+I_{0}\frac{\partial F_{2}}{\partial\alpha}(z_{0})),F_{2}(z_{0}))=0,
 \end{cases}
 \end{equation*}
 and so
  \begin{equation*}
  \begin{cases}
\beta_{0}||\partial_{s}f(x_{0})||g(\frac{\partial f}{\partial x}(x_{0})(\partial_{s}f)(x_{0})^{-1},1)=0\\
\beta_{0}||\partial_{s}f(x_{0})||g(\frac{\partial f}{\partial x}(x_{0})(\partial_{s}f)(x_{0})^{-1},I_{0})=0,
 \end{cases}
 \end{equation*}
therefore, for any $x_{0}\in S$, we have that $x_{0}\in N_{f}$.
\end{remark}

\begin{example}\label{exe3}
We will compute now the singular set $N_{h}$ of the function 
$h:\mathbb{H}\setminus\mathbb{R}\rightarrow\mathbb{H}$ defined
in example \ref{exe2},
\begin{equation*}
h(\alpha+I\beta)=\alpha+\beta i+j+I(\beta-\alpha i+k),\quad \alpha+I\beta\in\mathbb{H}\setminus\mathbb{R}.
\end{equation*}
We have seen that $\partial_{s}h\neq 0$ and so, thanks to Proposition \ref{degenerateprop}, 
$D_{f}=\emptyset$. But then, as it is stated in Proposition \ref{rank}, a point 
$x\in\mathbb{H}\setminus\mathbb{R}$ belongs to $N_{h}$ if and only if
 \begin{equation*}
  \frac{\partial h}{\partial x}(x)(\partial_sh(x_0))^{-1}\in\mathbb{C}_I^\bot.
  \end{equation*}
 Based on the computations in example \ref{exe2}, the last can be written explicitly as,
 \begin{equation*}
  (1-Ii)\frac{(\beta+\alpha i-k)}{\beta||\partial_{s}h(\alpha+I\beta)||^{2}}\in\mathbb{C}_I^\bot.
  \end{equation*}
  After some computation, using the ``scalar-vector'' notation, we obtain that the previous 
  relation is satisfied if and only if
  \begin{equation*}\left\{
  \begin{array}{rcl}
  g(\frac{\partial f}{\partial x}(x)(\partial_{s}f)(x)^{-1},1) & = & 0\\
  g(\frac{\partial f}{\partial x}(x)(\partial_{s}f)(x)^{-1},I) & = & 0
  \end{array}\right.\quad\Leftrightarrow\quad
\left\{
  \begin{array}{rcl}
   \beta(1+g(I,i))+g(I,j) & = & 0\\
   \alpha(1+g(I,i))-g(I,k) & = & 0
  \end{array}\right.
  \end{equation*}
This condition is clearly verified for any $x\in\mathbb{C}_{-i}^{+}$ that is the semi-slice
on which $h$ is constant and equal to $2j$. Suppose then that $I\neq -i$
and write $I=Ai+Bj+Ck$. The previous system become
  \begin{equation*}
\left\{
  \begin{array}{rcl}
   \beta & = & -B(1+A)^{-1}\\
   \alpha & = & C(1+A)^{-1},
  \end{array}\right. 
  \end{equation*}
  and so, for any $I\in\mathbb{S}$ such that $-B(1+A)^{-1}>0$ there exists a point 
  $\alpha+i\beta\in\mathbb{C}^{+}$ such that $\alpha+I\beta\in N_{h}$. We want to show
  now that the set of quaternions that satisfies these requirements is contained in the 
  surface $S_{h}$ defined in example \ref{exe2}.
  But if $x=C(1+A)^{-1}-(Ai+Bj+Ck)B(1+A)^{-1}$, then,
  \begin{equation*}
 \begin{array}{rcl}
 h(x) & = & C(1+A)^{-1}-B(1+A)^{-1}i+\\
 & & +j(Ai+Bj+Ck)(-B(1+A)^{-1}-C(1+A)^{-1}i+k)\\
 & = & C(1+A)^{-1}+AC(1+A)^{-1}-C+\\
 &  & +(-B(1+A)^{-1}-AB(1+A)^{-1}+B)i+\\
 & & +(1-A-B^{2}(1+A)^{-1}-C^{2(1+A)^{-1}})j+\\
& & + (BC(1+A)^{-1}-BC(1+A)^{-1})k\\
 & = & 0,
 \end{array}
  \end{equation*}
and since the zero set of $h$ is exactly the surface $S_{h}$ in example \ref{exe2}, then
$N_{h}=\mathbb{C}_{-i}^{+}\cup S_{h}$.

Since, now, the set of surfaces on which $h$ is constant is contained in 
$N_{h}$, we obtain that $\mathbb{C}_{-i}^{+}$ and $S_{h}$ are the only
two surfaces, contained in $\mathbb{H}\setminus\mathbb{R}$ on which
$h$ is constant.
  
\end{example}

The following theorem will characterize the set $N_f$ of singular points of $f$.
In particular, the next theorem generalizes a well known concept in real and complex
analysis i.e.: the fact that if the differential of a function is singular in some
point $x_0$, then, the function can be expanded in a neighborhood of $x_0$ as
\begin{equation*}
 f(x)=f(x_0)+o((x-x_0)^2).
\end{equation*}

\begin{theorem}[\cite{gensalsto}, Proposition 3.6]\label{thmchar}
Let $f\in\mathcal{SR}(\Omega_D)$ and let $x_0=\alpha +\beta I\in\Omega_D$. Then $x_0\in N_f$
if and only if there exists a point $\widetilde{x_0}\in\mathbb{S}_{x_0}$ and a function $g\in\mathcal{SR}(\Omega_D)$
such that the following equation hold:
\begin{equation*}
f(x)=f(x_0)+(x-x_0)\cdot(x-\widetilde{x_0})\cdot g(x).
\end{equation*}
Equivalently, $x_0\in N_f$ if and only if the function $f-f(x_0)$ has total multiplicity $n\geq 2$ in 
$\mathbb{S}_{x_0}$.
\end{theorem}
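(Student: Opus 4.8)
The plan is to reduce, once and for all, to the case $f(x_0)=0$: passing from $f$ to $f-f(x_0)$ changes neither the real differential of $f$ nor the set $N_f$, and the third condition of the statement is by definition about this modified function, which vanishes at $x_0$. A first, purely algebraic step is to check that the two reformulations are equivalent to each other. If $f=(x-x_0)\cdot(x-\widetilde{x_0})\cdot g$ with $\widetilde{x_0}\in\mathbb{S}_{x_0}$, then $\Delta_{\widetilde{x_0}}=\Delta_{x_0}$, so multiplicativity of the normal function gives $N(f)=N(x-x_0)\cdot N(x-\widetilde{x_0})\cdot N(g)=\Delta_{x_0}^{2}\,N(g)$, i.e. $x_0$ is a zero of total multiplicity $\ge 2$. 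Conversely, since $x_0\in V(f)$ the factorization theory of slice regular functions (see e.g. \cite{ghiloniperotti}) lets us write $f=(x-x_0)\cdot h$; then $\Delta_{x_0}^{2}\mid N(f)=\Delta_{x_0}\,N(h)$ forces $\Delta_{x_0}\mid N(h)$, hence $N(h)(x_0)=0$ and $\mathbb{S}_{x_0}$ meets $V(N(h))=\bigcup_{y\in V(h)}\mathbb{S}_y$, so $h$ vanishes at some $\widetilde{x_0}\in\mathbb{S}_{x_0}$ and $h=(x-\widetilde{x_0})\cdot g$ produces the decomposition. It thus remains to prove that $x_0\in N_f$ if and only if $\Delta_{x_0}^{2}\mid N(f)$.

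Using the factorization $f=(x-x_0)\cdot h$ (available because $f(x_0)=0$), I would compute $\partial f/\partial x$ and $\partial_s f$ at $x_0$ in terms of $h$. The Leibniz rule for $\partial/\partial x$ with respect to $\cdot$ gives $\partial f/\partial x=h+(x-x_0)\cdot\partial h/\partial x$, and since $x_0\in V(x-x_0)\subset V\!\big((x-x_0)\cdot\partial h/\partial x\big)$ by Proposition \ref{zerosprod}, evaluation at $x_0$ gives $\tfrac{\partial f}{\partial x}(x_0)=h(x_0)$. For the spherical derivative, Proposition \ref{prodcomp} applies since $x_0^{c}\notin V(x-x_0)$ and yields $f(x_0^{c})=(x_0^{c}-x_0)\,h\big((x_0^{c}-x_0)^{-1}x_0^{c}\,(x_0^{c}-x_0)\big)$; because $x_0^{c}-x_0=-2\,\mathrm{Im}(x_0)$ lies in $\mathbb{C}_J$ together with $x_0^{c}$, the inner conjugation is trivial, so $f(x_0^{c})=(x_0^{c}-x_0)h(x_0^{c})$, and substituting this and $f(x_0)=0$ into $\partial_s f(x_0)=\tfrac12\,\mathrm{Im}(x_0)^{-1}\big(f(x_0)-f(x_0^{c})\big)$ gives $\partial_s f(x_0)=h(x_0^{c})$. (If $x_0\in\mathbb{R}$ one has instead $\tfrac{\partial f}{\partial x}(x_0)=\partial_s f(x_0)=h(x_0)$, by formula \ref{formula}.) By Corollary \ref{reprediff} it follows that $df_{x_0}(v_1+v_2)=v_1\,h(x_0)+v_2\,h(x_0^{c})$ for $v_1\in\mathbb{C}_J$, $v_2\in\mathbb{C}_J^{\perp}$, so the invertibility of $df_{x_0}$ is decided by the pair $\big(h(x_0),h(x_0^{c})\big)$ exactly as in Proposition \ref{rank}.

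Writing $h=\mathcal{I}(H_1+\sqrt{-1}H_2)$ and $z_0=\alpha+i\beta$, so that $h(x_0)=H_1(z_0)+JH_2(z_0)$ and $h(x_0^{c})=H_1(z_0)-JH_2(z_0)$, a direct computation in $\mathbb{H}$ shows that each of the cases of Proposition \ref{rank} in which $df_{x_0}$ fails to be invertible --- that is, $h(x_0^{c})=0$, or $h(x_0)=0$, or $h(x_0)\,(h(x_0^{c}))^{-1}\in\mathbb{C}_J^{\perp}$ --- occurs precisely when $\|H_1(z_0)\|^{2}=\|H_2(z_0)\|^{2}$ and $\langle H_1(z_0),H_2(z_0)\rangle=0$, that is, precisely when $N(h)(x_0)=0$ (recall the description of the two components of $N(h)$ underlying system \ref{surfzero}). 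Hence $x_0\in N_f$ if and only if $N(h)(x_0)=0$; and $N(h)(x_0)=0$ means $\mathbb{S}_{x_0}$ meets $V(N(h))=\bigcup_{y\in V(h)}\mathbb{S}_y$, so $h$ vanishes at some $y_0\in\mathbb{S}_{x_0}$ and $\Delta_{x_0}=\Delta_{y_0}$ divides $N(h)$ (\cite{ghiloniperotti}, Corollary 23), i.e. $\Delta_{x_0}^{2}\mid N(f)$. Combined with the first paragraph this proves the theorem.

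The step I expect to be the main obstacle is the quaternionic linear-algebra identity of the third paragraph --- showing that ``$h$ vanishes at $x_0$ or at $x_0^{c}$, or the spherical ratio $h(x_0)(h(x_0^{c}))^{-1}$ lies in $\mathbb{C}_J^{\perp}$'' is equivalent to ``$N(h)(x_0)=0$'' --- since this is exactly what makes the rank criterion of Proposition \ref{rank} dovetail with the normal-function count. Care is also needed in the degenerate cases: when $x_0\in\mathbb{R}$ one invokes the last sentence of Proposition \ref{rank} together with $N(h)(x_0)=\|h(x_0)\|^{2}$; and when $N(f)\equiv 0$ the notion of total multiplicity is undefined, but then $N(f)\equiv 0$ forces $f$ to vanish on every sphere of $\Omega_D$ meeting $V(f)$ --- and $f\equiv 0$ altogether when $\Omega_D\cap\mathbb{R}\neq\emptyset$, by \cite{ghiloniperotti}, Theorem~20 --- so the factorization formulation of the statement still holds verbatim.
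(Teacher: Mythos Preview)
Your argument is correct and complete, but it is organized along different lines than the paper's proof. The paper works directly with the spherical expansion $f(x)=s_0+(x-x_0)s_1+\Delta_{x_0}(x)s_2+\Delta_{x_0}(x)(x-x_0)\cdot h(x)$: when $\partial_s f(x_0)\neq 0$, it invokes the observation that $\Psi(x)=(x-x_0)(x-x_0^c)^{-1}$ is a stereographic projection of $\mathbb{S}_{x_0}$ onto $\mathbb{C}_J^{\perp}$, so that the condition $1+2\,\mathrm{Im}(x_0)s_2 s_1^{-1}\in\mathbb{C}_J^{\perp}$ of Proposition~\ref{rank} is rewritten as $s_1=(x_0^c-\widetilde{x_0})s_2$ for a concrete $\widetilde{x_0}\in\mathbb{S}_{x_0}$, and then the series is explicitly regrouped into $(x-x_0)\cdot(x-\widetilde{x_0})\cdot g$. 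The equivalence with total multiplicity $\ge 2$ is only stated, not argued.

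You instead factor $f=(x-x_0)\cdot h$ first, identify $\tfrac{\partial f}{\partial x}(x_0)=h(x_0)$ and $\partial_s f(x_0)=h(x_0^c)$, and reduce Proposition~\ref{rank} to the single scalar criterion $N(h)(x_0)=0$ via the inner-product identities $\mathrm{Re}\!\big(h(x_0)h(x_0^c)^c\big)=\|H_1\|^2-\|H_2\|^2$ and $\mathrm{Re}\!\big(J\,h(x_0)h(x_0^c)^c\big)=-2\langle H_1,H_2\rangle$. The point $\widetilde{x_0}$ then appears nonconstructively from the zero-set theory of $N(h)$. This bypasses both the stereographic remark and the series regrouping, and it makes the ``equivalently, total multiplicity $\ge 2$'' clause an immediate byproduct rather than a separate assertion. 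What the paper's route buys is an explicit formula for $\widetilde{x_0}$; what yours buys is a cleaner logical structure and a direct bridge between the rank criterion and the normal function.
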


The proof of the last theorem in \cite{gensalsto} does not
use the hypothesis $\Omega_D\cap\mathbb{R}\neq\emptyset$. However, since
our setting and our notation are quite distant from \cite{gensalsto},
we will rewrite the proof. 
Before proving the last theorem we recall from \cite{gensalsto} the following remark.
\begin{remark}
For all $x_0=\alpha+J\beta\in\mathbb{H}\setminus\mathbb{R}$, setting $\Psi(x):=(x-x_0)(x-x_0^c)^{-1}$ defines a stereographic projection of 
$\alpha+\mathbb{S}\beta$ onto the plane $\mathbb{C}_J^\bot$ from the point $x_0^c$.
Indeed, if we choose $K\in\mathbb{S}$ with $K\bot J$ then for all $x=\alpha+\beta L$ with $L=tJ+uK+vJK\in\mathbb{S}$ we have 
$\Psi(x)=(L-J)(L+J)^{-1}=\frac{u+vJ}{1+t}JK$ and $\mathbb{C}_J\cdot K=(\mathbb{R}+\mathbb{R}J)JK=\mathbb{C}_J^\bot$.
\end{remark}

We are now able to pass to the proof of the theorem.

\begin{proof}
 If $x_0\in\Omega_D\setminus \mathbb{R}$ then it belongs to $D_f$ if and only if, $f$ is constant on the sphere $\mathbb{S}_{x_0}$, i.e. 
 there exists a slice regular function $g:\Omega_D\rightarrow \mathbb{H}$ such that 
 \begin{equation*}
  f(x)-f(x_0)=\Delta_{x_0}(x)g(x).
 \end{equation*}
This happens if and only if the coefficient $s_1=\partial_s f(x_0)$ in the spherical expansion vanishes.

Let now pass to the case $x_0\in\Omega_D\setminus \mathbb{R}$, $x_0\notin D_f$. Thanks to Proposition \ref{rank}, $x_0\in N_f$ if and only if,
$1+2Im(x_0)s_2s_1^{-1}=p\in\mathbb{C}_J^\bot$. Thanks to the previous remark, $p\in\mathbb{C}_J^\bot$ if and only if
there exists $\widetilde{x_0}\in \mathbb{S}_{x_0}\setminus \{x_0^c\}$ such that
$p=(\widetilde{x_0}-x_0)(\widetilde{x_0}-x_0^c)^{-1}$. The last formula is equivalent to
\begin{equation*}
\begin{array}{rcl}
  2Im(x_0)s_2s_1^{-1} & = & (\widetilde{x_0}-x_0)(\widetilde{x_0}-x_0^c)^{-1}-(\widetilde{x_0}-x_0^c)(\widetilde{x_0}-x_0^c)^{-1}\\
  & = & (\widetilde{x_0}-x_0-\widetilde{x_0}+x_0^c)(\widetilde{x_0}-x_0^c)^{-1}\\
  & = & -2Im(x_0)(\widetilde{x_0}-x_0^c)^{-1},
\end{array}
\end{equation*}
that is $s_1=(x_0^c-\widetilde{x_0})s_2$. Writing then the first terms of the spherical expansion of $f$ around $x_0$ we have:
\begin{equation*}
 \begin{array}{rcl}
  f(x) & = & s_0 +(x-x_0)s_1+\Delta_{x_0}(x)s_2+\Delta_{x_0}(x)(x-x_0)\cdot h(x)\\
  & = & s_0 +(x-x_0)(x_0^c-\widetilde{x_0})s_2+\Delta_{x_0}(x)s_2+\Delta_{x_0}(x)\cdot (x-x_0)h(x)\\
  & = & s_0+(x-x_0)(x_0^c-\widetilde{x_0})s_2+\\
  & & +\Delta_{x_0}(x)s_2+(x-x_0)\cdot (x-x_0^c)\cdot (x-x_0)\cdot h(x)\\
  & = & s_0 +(x-x_0)\cdot [(x_0^c-\widetilde{x_0}+x-x_0^c)s_2+\Delta_{\widetilde{x_0}}(x)h(x)]\\
  & = & s_0 +(x-x_0)\cdot (x-\widetilde{x_0})\cdot [s_2+(x-\widetilde{x_0^c}h(x))]\\
  & = & f(x_0) +(x-x_0)\cdot(x-\widetilde{x_0})\cdot [s_2+(x-\widetilde{x_0^c}h(x))],
 \end{array}
\end{equation*}
for some slice regular function $h:\Omega_D\rightarrow \mathbb{H}$, where we used the following facts:
\begin{itemize}
 \item $(x-x_0)(x_0^c-\widetilde{x_0})=(x-x_0)\cdot(x_0^c-\widetilde{x_0})$ because the second factor is constant;
 \item $\Delta_{x_0}(x)(x-x_0)=\Delta_{x_0}(x)\cdot (x-x_0)$ because the first factor is a real sli\-ce function;
 \item $(x-x_0^c)\cdot (x-x_0)=\Delta_{x_0}(x)$;
 \item $\Delta_{x_0}(x)=\Delta_{\widetilde{x_0}}(x)$ because $\widetilde{x_0}\in\mathbb{S}_{x_0}$.
\end{itemize}

Finally, if $x_0\in\Omega_D\cap\mathbb{R}$ then $s_1=0$ if and only if
\begin{equation*}
 f(x)=f(x_0)+(x-x_0)^2\cdot l(x)=f(x_0)+(x-x_0)\cdot(x-x_0)\cdot l(x),
\end{equation*}
for some slice regular function $l:\Omega_D\rightarrow \mathbb{H}$.
 \end{proof}

For the main results we need, now, two lemmas.
\begin{lemma}\label{emptyinterior}
 Let $f:\Omega_D\rightarrow \mathbb{H}\in\mathcal{SR}(\Omega_D)$ be non slice-constant. Then its singular set $N_f$ is closed and with empty interior.
\end{lemma}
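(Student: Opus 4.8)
The plan is to treat the two assertions separately. That $N_f$ is closed is immediate: in real coordinates, $x\mapsto\det(d_{\mathbb R}f)_x$ is a continuous real-valued function on $\Omega_D$ and $N_f$ is exactly its zero set. The statement on the interior I would prove by contradiction, the real goal being to show that a slice regular function whose real differential is nowhere invertible must be slice-constant.

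So assume $N_f$ has non-empty interior. Since $f$ is non slice-constant it is in particular non-constant, hence its degenerate set $D_f=V(\partial_sf)$ has empty interior (\cite{altavilla}) and is closed in $\Omega_D\setminus\mathbb R$ by Proposition \ref{degenerateprop}; as $\mathbb R$ has empty interior in $\mathbb H$ as well, I may replace the given open set by a non-empty open $U\subseteq(\Omega_D\setminus\mathbb R)\setminus D_f$ still contained in $N_f$. On $U$ one has $\partial_sf\neq0$, so Proposition \ref{rank} gives $\frac{\partial f}{\partial x}(x)\,(\partial_sf(x))^{-1}\in\mathbb C_J^\perp$ for every $x=\alpha+J\beta\in U$; writing $\mathbb C_J^\perp=\{q\in\mathbb H:\langle q,1\rangle=\langle q,J\rangle=0\}$ (with $\langle\cdot,\cdot\rangle$ the Euclidean product on $\mathbb H$) and clearing the positive real factors $|\partial_sf(x)|^2$ and $|Im(x)|$, this is equivalent to the two identities
\begin{equation*}
\Big\langle\tfrac{\partial f}{\partial x}(x),\,\partial_sf(x)\Big\rangle=0,
\qquad
\Big\langle\tfrac{\partial f}{\partial x}(x),\,Im(x)\,\partial_sf(x)\Big\rangle=0
\end{equation*}
on $U$. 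The two left-hand sides are real-analytic on $\Omega_D\setminus\mathbb R$, because a slice regular function, its slice derivative $\frac{\partial f}{\partial x}$ and its spherical derivative $\partial_sf$ are all real-analytic off $\mathbb R$; since $\Omega_D\setminus\mathbb R$ is connected and these functions vanish on the non-empty open set $U$, they vanish on all of $\Omega_D\setminus\mathbb R$.

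The next step is to read the identities over a fixed sphere. Let $F=F_1+\sqrt{-1}F_2$ be the stem function of $f$; by Remark \ref{spherical}, $\partial_sf(\alpha+J\beta)=G_1(z)$ with $G_1=F_2/Im$ independent of $J$ (here $z=\alpha+i\beta$, $\beta>0$), while formula \ref{formula}, combined with the explicit slice derivative of $\partial_sf$, gives $\frac{\partial f}{\partial x}(\alpha+J\beta)=\big(G_1+\beta\,\partial_\beta G_1\big)(z)+J\,\beta\,\partial_\alpha G_1(z)$. Substituting these into the two identities, which must now hold for all $J\in\mathbb S$, and using that a real function $J\mapsto\langle J,v\rangle$ is constant on $\mathbb S$ only if $v=0$, a short manipulation of the quaternionic inner product forces both summands of $\frac{\partial f}{\partial x}(\alpha+J\beta)$ to vanish at every $z$ with $G_1(z)\neq0$. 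Such $z$ are dense in $D$ (because $D_f$ has empty interior) and $\frac{\partial f}{\partial x}$ is continuous, so $\frac{\partial f}{\partial x}\equiv0$ on $\Omega_D$; by \cite{altavilla} this means $f$ is slice-constant — the desired contradiction. Therefore $N_f$ has empty interior.

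I expect the main obstacle to be this chain of propagation arguments: passing from the vanishing of the two quantities on $U$ to their vanishing on all of $\Omega_D\setminus\mathbb R$ via real-analyticity, and then extracting, from their $J$-dependence over each $2$-sphere, the pointwise vanishing of $\frac{\partial f}{\partial x}$. It is here that connectedness of $\Omega_D\setminus\mathbb R$, the negligibility of $D_f$ and $\mathbb R$, and formula \ref{formula} all enter in an essential way, and it is also where the proof must depart from the one in \cite{gensalsto}: there the assumption $\Omega_D\cap\mathbb R\neq\emptyset$ makes it possible to argue slice by slice near a real point, which is not an option in general.
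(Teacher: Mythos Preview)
Your argument is correct and genuinely different from the paper's. Both proofs agree on the overall architecture — closedness first, then empty interior by contradiction after discarding $D_f$ and $\mathbb R$ — but diverge at the core step.

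For closedness, you use the continuity of $x\mapsto\det(d_{\mathbb R}f)_x$, which is quicker than the paper's route of showing $D_f$ and $N_f\setminus D_f$ closed separately.

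For the empty interior, the paper invokes Theorem~\ref{thmchar} to factor $N(f-f(y))$ with a double $\Delta_y$ for each $y$ in the putative ball, differentiates, and then runs a case analysis (is $\partial f/\partial x(y)$ zero? is the conjugated point equal to $y$?) that eventually pins $\partial f/\partial x$ into $\mathbb C_I$ and $\partial_sf$ into $\mathbb C_I^\perp$ on a whole semislice, yielding a contradiction with the stem-function form of the slice derivative. Your route bypasses all of this: you propagate the two scalar identities coming from Proposition~\ref{rank} by real-analyticity (legitimate, since $F_1,F_2$ are holomorphic and the passage to $\mathbb H$-coordinates is real-analytic off $\mathbb R$, and $\Omega_D\setminus\mathbb R$ is connected in both the slice-domain and product-domain cases), and then read them sphere by sphere. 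The ``short manipulation'' you allude to does go through: writing $\frac{\partial f}{\partial x}=A(z)+JB(z)$ and $\partial_sf=G_1(z)$, the identities become $\langle A,G_1\rangle+\langle J,G_1B^c\rangle=0$ and $-\langle J,G_1A^c\rangle+\langle B,G_1\rangle=0$ for all $J\in\mathbb S$; constancy of a linear functional on $\mathbb S$ forces $\mathrm{Im}(G_1A^c)=\mathrm{Im}(G_1B^c)=0$ and $\langle A,G_1\rangle=\langle B,G_1\rangle=0$, whence $G_1A^c=G_1B^c=0$ and so $A=B=0$ wherever $G_1\neq0$.

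What each approach buys: yours is shorter and conceptually cleaner, leaning on real-analyticity and formula~\eqref{formula}; the paper's stays within the slice-regular toolkit (factorization, the normal function, the representation formula) and extracts along the way the finer information that the obstruction already appears on a single semislice.
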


\begin{proof}
By Proposition \ref{dfisempty}, $D_f$ has empty interior.
Since $D_f=V(\partial_sf)$ then it is closed in $\Omega_D$.
 So, since $D_f\subset N_f$, then the thesis is that $N_f\setminus D_f$ is closed and has empty interior. 
 
 To show that $N_f\setminus D_f$ is closed it is sufficient to observe
 that, for any $y=\alpha+I\beta$ in this set
 \begin{equation*}
\frac{\partial f}{\partial x}(y)\partial_{s}f(y)^{-1}\in \mathbb{C}_{I}^{\bot}
\end{equation*}
and this is true if and only if 
\begin{equation*}
\frac{\partial f}{\partial x}(y)\partial_{s}f(y)^{-1}Im(y)=-Im(y)\frac{\partial f}{\partial x}(y)\partial_{s}f(y)^{-1}.
\end{equation*}
But then 
\begin{equation*}
N_f\setminus D_f=\{y\in\Omega_{D}\,|\,
\frac{\partial f}{\partial x}(y)\partial_{s}f(y)^{-1}Im(y)+Im(y)\frac{\partial f}{\partial x}(y)\partial_{s}f(y)^{-1}=0\},
\end{equation*}
and so it is the pre-image, via a continuous function, of a closed set.

 Let $x_{0}\in N_{f}\setminus D_{f}$ and by contradiction let $R>0$ be a real number 
 such that the open Euclidean ball $B=B(x_{0},R)$ centered in $x_{0}$ with radius 
 $R$ is
 fully contained in $N_{f}\setminus D_{f}$. For any $y\in B$ the spherical derivative 
 $\partial_{s}f(x_{0})\neq 0$ and, by Theorem \ref{thmchar}, there
 exists a slice regular function $h_{y}:\Omega_{D}\rightarrow \mathbb{H}$ such that
 $N(f-f(y))=\Delta_{y}(x)^{2}h(x)$, where $N(f-f(y))$ is the normal function of $f-f(y)$.
 Computing the slice derivative of $N(f-f(y))$ and evaluating in $x=y$ we obtain
 \begin{equation*}
 0=\left[\frac{\partial N(f-f(y))}{\partial x}\right]_{x=y}=\left[\frac{\partial f}{\partial x}\cdot (f-f(y))^{c}\right]_{x=y}.
 \end{equation*}
 There are two cases $1)$ $\frac{\partial f}{\partial x}(y)=0$ or $2)$ $\frac{\partial f}{\partial x}(y)\neq0$. Case $2)$ implies, using formula in Proposition \ref{prodcomp}, that 
 \begin{equation*}
 f\left(\frac{\partial f}{\partial x}(y)^{-1}y\frac{\partial f}{\partial x}(y)\right)=f(y).
 \end{equation*}
Case $1)$ can be divided into two sub-cases: $i)$ $y=\alpha+I\beta$ is an isolated zero for 
the slice derivative in $D_{I}^{+}$ or $ii)$ $\frac{\partial f}{\partial x}_{I}^{+}\equiv 0$.
If  $ii)$ holds true, then we change our point $y$ considering another point $\omega\in B$ lying on another different semi-slice. 
Then, $\omega$ can only be an isolated zero on its semi-slice for the slice derivative of $f$ (otherwise, thanks to Lemma \ref{lemmasurf}, 
$\partial f/\partial x$ would be constant and $f$ would, then, be slice-constant). 
The only possibility is, therefore, case $i)$. If we are in case $1),i)$ then we can find a positive real number $r$
such that the two dimensional disc $\Delta =\Delta_{I}(x_{0},r)$ is contained in $B\cap\mathbb{C}_{I}^{+}$ and, 
for any $x\in \Delta\setminus\{y\}$ we have $\frac{\partial f}{\partial x}(x)\neq 0$. For any $y'\in\Delta\setminus\{y\}$ we are in case $2)$ and, again, there are two
sub cases: $A)$ $\frac{\partial f}{\partial x}(y')^{-1}y'\frac{\partial f}{\partial x}(y')\neq y'$ or
$B)$ $\frac{\partial f}{\partial x}(y')^{-1}y'\frac{\partial f}{\partial x}(y')=y'$.
If there is a point that satisfies case $A)$, then $f$ would be equal to some quaternion 
$p$ both in $y'$ and in $\frac{\partial f}{\partial x}(y')^{-1}y'\frac{\partial f}{\partial x}(y')$ 
and this would implies, using the representation theorem, that $f|_{\mathbb{S}_{y}}\equiv p$ that is $\mathbb{S}_{y}\in D_{f}$.
So, the only possible case is, finally, $B)$. But if condition $B)$ holds true for any 
$y\in \Delta\setminus{y'}$, then 
\begin{equation*}
y\frac{\partial f}{\partial x}(y)=\frac{\partial f}{\partial x}(y)y,
\end{equation*}
and so, for any $y\in\Delta\setminus{y'}$, $\frac{\partial f}{\partial x}(y)$ belongs to $\mathbb{C}_{I}$, therefore, 
thanks to Theorem \ref{identity}, this is true for any point in
$D_{I}^{+}$.
We claim that this is not possible. In fact, if $\alpha+I\beta=y\in B$, then 
%$(\frac{\partial f}{\partial x})_{I}^{+}\in\mathbb{C}_{I}^{+}$
\begin{equation*}
\frac{\partial f}{\partial x}(y)\partial_{s}f(y)^{-1}\in \mathbb{C}_{I}^{\bot}
\end{equation*}
and, as before,  this is true if and only if 
\begin{equation}\label{eqsing}
\frac{\partial f}{\partial x}(y)\partial_{s}f(y)^{-1}Im(y)=-Im(y)\frac{\partial f}{\partial x}(y)\partial_{s}f(y)^{-1}.
\end{equation}
But $\frac{\partial f}{\partial x}(y)$ belongs to $\mathbb{C}_{I}$ then it commutes
with $Im(y)$ and so, from the previous equation \ref{eqsing}, we get,
\begin{equation*}
\partial_{s}f(y)^{-1}Im(y)=-Im(y)\partial_{s}f(y)^{-1} 
\end{equation*}
which means that $\partial_{s}f(y)\in \mathbb{C}_{I}^{\bot}$ for each $y\in D_{I}^{+}$. This implies
that there exists an imaginary unit $J\in\mathbb{S}$ orthogonal to $I$ and a function
$g:D_{I}^{+}\rightarrow \mathbb{R}$ such that, for any $y\in D_{I}^{+}$ it holds
$\partial_{s}f(y)=\frac{1}{\beta}g(y)J$. Since the spherical derivative is independent from the
imaginary unit $I$ then it is $g$ too. Since $f=\mathcal{I}(F_{1}+\sqrt{-1}F_{2})$ is a slice regular function, then
\begin{equation*}
\left(\frac{\partial f}{\partial x}\right)_{I}=\frac{\partial F_{2}}{\partial\beta}-I\frac{\partial F_{2}}{\partial\alpha}=\left(\frac{\partial g}{\partial\beta}-I\frac{\partial g}{\partial\alpha}\right)J
\end{equation*}
and this is not possible since, as we said, the slice derivative belongs to $\mathbb{C}_{I}$.
\end{proof}

\begin{lemma}%\label{lemmasphder}
 Let $f=\mathcal{I}(F):\Omega_D\rightarrow \mathbb{H}$ be an injective slice function. Then for any $x=\alpha +J\beta\in\Omega_D\setminus\mathbb{R}$, $\partial_sf(x)\neq 0$.
\end{lemma}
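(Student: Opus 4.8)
The plan is to argue by contradiction using nothing more than the definition of the spherical derivative; no regularity of $f$ is needed. Suppose there exists a point $x_0=\alpha+J\beta\in\Omega_D\setminus\mathbb{R}$ with $\partial_s f(x_0)=0$. Since $x_0\notin\mathbb{R}$ we have $\beta\neq 0$, so $\mathrm{Im}(x_0)=J\beta$ is a nonzero quaternion and hence invertible. Therefore the equality
\begin{equation*}
\partial_s f(x_0)=\tfrac12\,\mathrm{Im}(x_0)^{-1}\bigl(f(x_0)-f(x_0^c)\bigr)=0
\end{equation*}
is equivalent to $f(x_0)=f(x_0^c)$.

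Next I would observe that $x_0^c=\alpha-J\beta=\alpha+(-J)\beta$ lies in $\Omega_D$, because $\Omega_D$ is a circular set and $-J\in\mathbb{S}$ (equivalently, $D$ may be taken symmetric with respect to the real axis, so $\alpha-i\beta\in D$). Moreover $x_0^c\neq x_0$, precisely because $\beta\neq 0$. Thus $f$ takes the same value at two distinct points of its domain, contradicting the injectivity of $f$. This forces $\partial_s f(x)\neq 0$ for every $x\in\Omega_D\setminus\mathbb{R}$.

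Alternatively, one can phrase the same argument through Remark \ref{spherical}: the vanishing of $\partial_s f$ at $x_0$ means $f$ is constant on the whole $2$-sphere $\mathbb{S}_{x_0}$, which has more than one point, and this again contradicts injectivity. Either way there is no genuine obstacle here; the statement is essentially an unravelling of the definition of $\partial_s f$, and the only point worth a word is the verification that the conjugate point $x_0^c$ belongs to the (conjugation-symmetric) domain $\Omega_D$.
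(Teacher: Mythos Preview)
Your proof is correct and takes essentially the same approach as the paper: the paper simply invokes Remark \ref{spherical} (that $\partial_s f(x)=0$ iff $f$ is constant on $\mathbb{S}_x$) and observes this contradicts injectivity, which is exactly your ``alternative'' phrasing. Your primary argument via $f(x_0)=f(x_0^c)$ is just a slightly more explicit unwinding of the same idea.
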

\begin{proof}
 We know that $\partial_sf(x)=0$ if and only if $f$ is constant on the sphere $\mathbb{S}_x$ (see Remark \ref{spherical}).  But then if $f$ is injective
 then $\partial_sf(x)\neq 0$ for all $x\in\Omega_D\setminus\mathbb{R}$.
 \end{proof}

Now we have that every injective slice regular function has real differential with rank at least equal to 2. The next step is to prove that 
for every injective slice regular function $f$ the slice derivative $\frac{\partial f}{\partial x}$ is everywhere different from 0.
\begin{theorem}\label{thmslder}
 Let $f=\mathcal{I}(F):\Omega_D\rightarrow\mathbb{H}$ be an injective slice regular function. Then its slice derivative $\frac{\partial f}{\partial x}$ is always 
 different from zero.
\end{theorem}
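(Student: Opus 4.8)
The plan is to reason by contradiction: assuming $\partial f/\partial x(x_0)=0$ at some $x_0=\alpha_0+J_0\beta_0\in\Omega_D$, I will exhibit two distinct points of $\Omega_D$ with the same image. First the trivial reductions. If $x_0\notin\mathbb{R}$ and $\partial_sf(x_0)=0$, then by Remark \ref{spherical} $f$ is constant on the $2$-sphere $\mathbb{S}_{x_0}$, so $f$ is not injective; hence I may assume either $\partial_sf(x_0)\neq 0$, or $x_0\in\mathbb{R}$ (in which case $\partial f/\partial x(x_0)=\partial_sf(x_0)$ by the Remark following formula \ref{formula}, so this too vanishes).

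In both surviving cases I first claim that $f(x)=f(x_0)+(x-x_0)^{\cdot 2}\cdot g(x)$ for some $g\in\mathcal{SR}(\Omega_D)$. By Proposition \ref{rank} the vanishing of $\partial f/\partial x(x_0)$ forces $x_0\in N_f$, so Theorem \ref{thmchar} gives $\widetilde{x_0}\in\mathbb{S}_{x_0}$ and $g\in\mathcal{SR}(\Omega_D)$ with $f(x)=f(x_0)+(x-x_0)\cdot(x-\widetilde{x_0})\cdot g(x)$; inspecting its proof, $\widetilde{x_0}$ is characterised by $s_1=(x_0^c-\widetilde{x_0})s_2$, and since here $s_1=\partial_sf(x_0)$ and $s_1+2\,\mathrm{Im}(x_0)s_2=\partial f/\partial x(x_0)=0$ (with $s_2\neq 0$ when $s_1\neq 0$) we get $\widetilde{x_0}=x_0$, hence the claimed factorisation (for $x_0\in\mathbb{R}$ it is simply $s_1=0$ in the Taylor expansion at $x_0$). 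Next I localise the zeros of $g$: since $V((x-x_0)^{\cdot 2})=\{x_0\}$, Proposition \ref{zerosprod} gives $\bigcup_{y\in V(f-f(x_0))}\mathbb{S}_y=\mathbb{S}_{x_0}\cup\bigcup_{y\in V(g)}\mathbb{S}_y$, and injectivity forces $V(f-f(x_0))=\{x_0\}$, so $V(g)\subseteq\mathbb{S}_{x_0}$.

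The core step is to pass to the slice $\mathbb{C}_{J_0}$ and invoke complex function theory. Using Proposition \ref{prodcomp} and the commutativity of $\mathbb{C}_{J_0}$ (which contains $x_0$ and $x_0^c$), the factorisation restricts to $f_{J_0}(z)-f(x_0)=(z-x_0)^2\,g_{J_0}(z)$ on $D_{J_0}$, where by the previous paragraph $g_{J_0}$ vanishes at most at $\{x_0,x_0^c\}$. Take the splitting $g_{J_0}=h_1+h_2K$ of Lemma \ref{splitting} ($K\bot J_0$), so that $f_{J_0}(z)=f(x_0)+(z-x_0)^2h_1(z)+(z-x_0)^2h_2(z)K$; injectivity of $f_{J_0}$ says the holomorphic map $z\mapsto\big((z-x_0)^2h_1(z),(z-x_0)^2h_2(z)\big):D_{J_0}\to\mathbb{C}_{J_0}^2$ is injective, and both of its components have a zero of order $\geq 2$ at $x_0$. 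If $h_1,h_2$ are $\mathbb{C}_{J_0}$-linearly dependent (in particular if one is identically zero), then $f_{J_0}-f(x_0)$ takes values in a single affine complex line $f(x_0)+\mathbb{C}_{J_0}q_0$, and on that line $f_{J_0}$ is a non-constant holomorphic function of $z$ with a critical point at $x_0$; by the classical statement recalled in Remark \ref{valencerk} (the valence at the critical value is $\geq 2$ and is locally constant on the complement of the image of a small circle) $f_{J_0}$ is at least $2$-to-$1$ near $x_0$, contradicting injectivity.

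The remaining, and main, difficulty is the case where $h_1,h_2$ are $\mathbb{C}_{J_0}$-linearly independent — then $f_{J_0}$ alone can be an injective ``cusp-type'' map of the slice and no contradiction comes from $\mathbb{C}_{J_0}$. To handle it I would exploit that $f$ is injective on all of $\Omega_D$: under our hypothesis one computes $\partial f/\partial x(\alpha_0+I\beta_0)=(I-J_0)\frac{\partial F_2}{\partial\alpha}(z_0)$ for every $I\in\mathbb{S}$, which pins down how the image $2$-spheres $F_1(z)+\mathbb{S}F_2(z)$ of the spheres near $\mathbb{S}_{x_0}$ cluster around $F_1(z_0)+\mathbb{S}F_2(z_0)$; combining this with the representation formula (Theorem \ref{representationtheorem}) and, once more, the valence argument of Remark \ref{valencerk} applied now to the real-slice normal function $N(f-f(x_0))$ restricted to $\mathbb{C}_{J_0}$, one forces two such image spheres to intersect, which produces the required pair of distinct points with equal image. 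I expect this last step — keeping track of slice products under pointwise evaluation and of the interplay between slice and spherical directions — to be the most delicate part of the proof; everything preceding it follows routinely from results already established in the paper.
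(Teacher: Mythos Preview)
Your proof has a genuine gap precisely where you acknowledge it: the case in which the splitting components $h_1,h_2$ of $g_{J_0}$ are $\mathbb{C}_{J_0}$-linearly independent is not handled. You correctly observe that in this case $f_{J_0}$ can be an injective cusp (for instance $z\mapsto z^2+z^3K$ near $0$), so the contradiction must come from injectivity of $f$ \emph{off} the slice $\mathbb{C}_{J_0}$; but your sketch for extracting it is not a proof. The formula $\partial f/\partial x(\alpha_0+I\beta_0)=(I-J_0)\,\partial F_2/\partial\alpha(z_0)$ is correct, yet the subsequent claim that the valence of $N(f-f(x_0))|_{\mathbb{C}_{J_0}}$ together with the representation formula ``forces two image spheres to intersect'' is left unargued. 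Note that $N(f-f(x_0))$ having total multiplicity $\geq 2$ at $\mathbb{S}_{x_0}$ is exactly the content of Theorem~\ref{thmchar} you already invoked, so appealing to it again yields nothing new; and turning ``image spheres cluster'' into ``two distinct points with equal image'' would require a quantitative transversality argument you have not supplied.

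The paper's proof takes a different, more direct route that bypasses this obstruction entirely. It does not try to contradict injectivity via the factorisation $(x-x_0)^{\cdot 2}\cdot g$; instead it applies the valence argument of Remark~\ref{valencerk} to each splitting component $f_1,f_2$ of $f_{J_0}=f_1+f_2K$ \emph{individually}. Injectivity (hence non-slice-constancy) makes the zeros of $\partial f/\partial x$ isolated on $D_{J_0}$, so $n(x;f):=\min\bigl(n_1(x;f),n_2(x;f)\bigr)=1$ on a punctured neighbourhood of $x_0$; choosing a disc $B$ on which each $f_i$ attains $f_i(x_0)$ only at $x_0$ and $f_i'\neq 0$ off $x_0$, the local constancy of the valence then forces $n_j(x_0)=1$ for some $j$, i.e.\ $f_j'(x_0)\neq 0$, whence $\partial f/\partial x(x_0)\neq 0$. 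The moral is that working with the $\mathbb{C}_{J_0}$-valued components \emph{separately} lets classical one-variable valence do all the work, whereas your approach of treating $f_{J_0}$ as a map into $\mathbb{C}_{J_0}^2$ runs head-on into the cusp phenomenon.
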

\begin{proof}
 What we want to prove is that, for any $x_0=\alpha +J\beta\in\Omega_D$
 \begin{equation*}
  \frac{\partial f}{\partial x}(x_0)\neq 0.
 \end{equation*}
 First of all, thanks to Theorem \ref{preidentity} applied to the slice derivative of $f$, if $\partial f/\partial x$ is equal to zero in $y\in D_I^+\subset\Omega_D$,
 for some $I\in\mathbb{S}$, then, either $D_I^+\subset V(\partial f/\partial x)$, or $y$ is isolated in $D_I\cap V(\partial f/\partial x)$.
 But if $D_I^+\subset V(\partial f/\partial x)$, then $f_I^+$ is constant and since, by hypothesis, $f$ is injective, then this is not possible.
 Since $f$ is slice regular, then, thanks to Lemma \ref{splitting}, for any 
 $J\bot K\in\mathbb{S}$ there exist two holomorphic functions 
 $f_1,f_2:D_J\rightarrow \mathbb{C}_J$ such that $f_J=f_1+f_2K$.

 Thanks to Lemma \ref{lemma2.1} we have then that, 
 \begin{equation*}
 \frac{\partial f}{\partial x}(x_0)=\frac{\partial f_1}{\partial z_J}(x_{0})+\frac{\partial f_2}{\partial z_J}(x_{0})K,
\end{equation*}
where $\partial/\partial z_J=1/2(\partial/\partial\alpha-J\cdot\partial/\partial\beta)$
and so, since $f_1$ and $f_2K$ lives on independent subspaces of $\mathbb{H}$,
the thesis become that at least one of the two derivatives 
$\frac{\partial f_1}{\partial z_J}(x_{0})$, $\frac{\partial f_2}{\partial z_J}(x_{0})$
is different from zero.
Moreover, since $f$ is injective, then also $f_J$ is injective.
So, if one between $f_1$ and $f_2$ is constant, then the
other one must be injective, and so we will have an injective holomorphic 
function and the thesis will follow trivially.
Let's suppose then that both $f_1$ and $f_2$ are non-constant functions and fix 
the following notations:
\begin{equation*}
  n(x;f):=\inf\{k\in\mathbb{N}\setminus \{0\}\,|\,\frac{\partial^k f}{\partial x^k}(x)\neq 0\},
 \end{equation*}
 \begin{equation*}
  n_1(x;f):=\inf\{k\in\mathbb{N}\setminus \{0\}\,|\, f^{(k)}_1(x)\neq 0\},
 \end{equation*}
 \begin{equation*}
  n_2(x;f):=\inf\{k\in\mathbb{N}\setminus \{0\}\,|\, f^{(k)}_2(x) \neq 0\},
 \end{equation*}
 where $f^{(k)}_i$ denotes the $k$-th derivative of $f_i$ with respect to $\partial/\partial z_J$
 Using again  Lemma \ref{lemma2.1}, we have that, for every $x\in D_J$, 
 \begin{equation*}
n(x;f)=\min(n_1(x;f),n_2(x;f)).  
 \end{equation*}
 Moreover, since $f$ is non slice-constant then the null set 
 of its slice derivative restricted to the semi-slice $D_J$ is discrete.
 For this reason we can take two balls $B_1:=B_1(x_0;r_1)$, $B_2:=B_2(x_0;r_2)$, such that their closure 
 is contained in $D_{J}^{+}$, $f_i$ take the value $f_i(x_0)$ on $\overline{B_i}$ only 
 at $x_0$ and $f_i'(z)\neq 0$ for any $z\in B_i\setminus \{x_0\}$.
 Let now $B=B_1\cap B_2$, then, as it is pointed out
 in Remark \ref{valencerk}, the valence $v_{f_i}(f_i(z))$ of $f_i|_B$  is constant and equal 
 to $n_i(z;f)$ in the component of  $(\mathbb{C}_J\cup\{\infty\})\setminus f(\partial B)$ which contains $f_{i}(z)$. 
 Since $n(x;f)=\min(n_1(x;f),n_2(x;f))$ and $n(x;f)=1$ almost everywhere, then 
 $\exists y\in B$ and $j\in\{1,2\}$ such that $1=n(y;f)=n_j(y;f)$. 
 Then $n_j$ is constant and
 equal to $1$ in $B$ and so $f_{j}(\omega)\neq 0$, for all $\omega\in B$
 and we have the thesis.
  
 \end{proof}

\begin{remark}
 The proof of the previous statement works also to prove that a slice regular function 
 $f:\Omega_D\rightarrow \mathbb{H}$ that is injective on a semi-slice 
 $D_J^+\subset \Omega_D$ has slice derivative nonzero over the same semi-slice $D_J^+$. We choose to formalize the theorem in the previous less general hypothesis 
 only to simplify the reading. 
\end{remark}

\begin{theorem}\label{maintheorem}
 Let $f$ be an injective slice regular function, then $N_f=\emptyset$.
\end{theorem}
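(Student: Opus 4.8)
The plan is to use everything already established to collapse the problem to one rigid local situation, and then to contradict injectivity along the lines of the proof of Lemma~\ref{emptyinterior}.

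First I would dispose of the real points. Since $f$ is injective, the preceding Lemma gives $\partial_s f(x)\neq 0$ for every $x\in\Omega_D\setminus\mathbb{R}$; by Proposition~\ref{rank} this forces $df_x$ to have rank $\geq 2$ on $\Omega_D\setminus\mathbb{R}$ and rank $4$ at every $\alpha\in\Omega_D\cap\mathbb{R}$ (the relevant hypothesis $\partial_s f(\alpha+J\beta)\neq 0$ holding for the infinitesimally near sphere). Hence $N_f\subseteq\Omega_D\setminus\mathbb{R}$, and it suffices to rule out $x_0=\alpha+J\beta\in N_f$ with $\beta\neq 0$.

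So assume $x_0\in N_f$. By Theorem~\ref{thmchar} write $f(x)=f(x_0)+(x-x_0)\cdot(x-\widetilde{x_0})\cdot g(x)$ with $\widetilde{x_0}\in\mathbb{S}_{x_0}$ and $g\in\mathcal{SR}(\Omega_D)$. Injectivity makes $f$ non-constant on $\mathbb{S}_{x_0}$, so $x_0\notin D_f$ and $s_1=\partial_s f(x_0)\neq 0$; Theorem~\ref{thmslder} gives $\frac{\partial f}{\partial x}(x_0)\neq 0$. Using the relations $s_1=(x_0^c-\widetilde{x_0})s_2$ and $\frac{\partial f}{\partial x}(x_0)=s_1+2\,\mathrm{Im}(x_0)s_2=(x_0-\widetilde{x_0})s_2$ recorded inside the proof of Theorem~\ref{thmchar}, these two non-vanishings yield $s_2\neq 0$, $\widetilde{x_0}\neq x_0^c$ and $\widetilde{x_0}\neq x_0$; in particular $V\big((x-x_0)\cdot(x-\widetilde{x_0})\big)=\{x_0\}$ and, taking normal functions, $N(f-f(x_0))=\Delta_{x_0}^{\,2}\,N(g)$. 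Moreover $g$ is zero-free: if $g(y)=0$ then $N(g)$, hence $N(f-f(x_0))$, vanishes identically on $\mathbb{S}_y$, so by \cite{ghiloniperotti}, Corollary~19, $f-f(x_0)$ vanishes somewhere on $\mathbb{S}_y$; injectivity then forces $\mathbb{S}_y=\mathbb{S}_{x_0}$, and a zero of $g$ on $\mathbb{S}_{x_0}$ could be split off, raising the total multiplicity of $x_0$ as a zero of $f-f(x_0)$ beyond $2$, against $\Delta_{x_0}^{\,2}\,\|\,N(f-f(x_0))$.

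The crux is the final contradiction. Differentiating $N(f-f(x_0))=(f-f(x_0))\cdot(f-f(x_0))^c$ with respect to $x$ and evaluating at $x_0$, using that $\Delta_{x_0}^{\,2}$ divides $N(f-f(x_0))$ (so the left-hand side has vanishing slice derivative at $x_0$) together with $\frac{\partial f}{\partial x}(x_0)\neq 0$ and Proposition~\ref{prodcomp}, one obtains $f(\widehat{x_0})=f(x_0)$ with $\widehat{x_0}:=\frac{\partial f}{\partial x}(x_0)^{-1}x_0\frac{\partial f}{\partial x}(x_0)\in\mathbb{S}_{x_0}$ — this is exactly the computation carried out in the proof of Lemma~\ref{emptyinterior}. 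By injectivity $\widehat{x_0}=x_0$, i.e. $\frac{\partial f}{\partial x}(x_0)$ commutes with $x_0$, so $\frac{\partial f}{\partial x}(x_0)\in\mathbb{C}_J$; since $x_0\in N_f$ means $\frac{\partial f}{\partial x}(x_0)(\partial_s f(x_0))^{-1}\in\mathbb{C}_J^\bot$ and $\frac{\partial f}{\partial x}(x_0)$ commutes with $\mathrm{Im}(x_0)$, this forces $\partial_s f(x_0)\in\mathbb{C}_J^\bot$. One then wants to close exactly as at the end of the proof of Lemma~\ref{emptyinterior}: if $\frac{\partial f}{\partial x}$ lies in $\mathbb{C}_J$ and $\partial_s f$ in $\mathbb{C}_J^\bot$ all along the semislice $D_J^+$, then writing $\partial_s f=\frac1\beta gJ$ there (with $g$ real-valued and $J$ fixed) and using slice regularity gives $\big(\tfrac{\partial f}{\partial x}\big)_J=\big(\tfrac{\partial g}{\partial\beta}-J\tfrac{\partial g}{\partial\alpha}\big)J\in\mathbb{C}_J^\bot$, contradicting $\frac{\partial f}{\partial x}(x_0)\in\mathbb{C}_J\setminus\{0\}$. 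I expect the main obstacle to be precisely this propagation: the conditions $\frac{\partial f}{\partial x}(x_0)\in\mathbb{C}_J$ and $\partial_s f(x_0)\in\mathbb{C}_J^\bot$ are, as a short $\mathbb{H}$-algebra check shows, consistent at a single point, so one must first upgrade them to hold on a set with an accumulation point in $D_J^+$ (say by showing that the holomorphic $\mathbb{C}_J^\bot$-component of $\frac{\partial f}{\partial x}|_{D_J^+}$ vanishes on $N_f\cap D_J^+$ and that, were this set discrete, running the same argument at a perturbed semislice would relocate the singular point and eventually fill $\Omega_D$) before invoking the Identity Principle (Theorem~\ref{identity}). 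This step granted, $N_f=\emptyset$ follows.
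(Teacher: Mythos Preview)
Your proposal has a genuine gap, and it is precisely the one you flag: the pointwise conclusions $\frac{\partial f}{\partial x}(x_0)\in\mathbb{C}_J$ and $\partial_s f(x_0)\in\mathbb{C}_J^\bot$ are perfectly consistent with $x_0\in N_f$, and nothing you have written upgrades them to hold on a set accumulating in $D_J^+$. The sketch ``perturb the semislice and relocate the singular point'' is not a mechanism; you would need to show that the $\mathbb{C}_J^\bot$-component of the holomorphic map $\frac{\partial f}{\partial x}|_{D_J^+}$ vanishes on a non-discrete set, and you have no handle on $N_f\cap D_J^+$ beyond the single point $x_0$. There is also a smaller issue in your import of the computation from Lemma~\ref{emptyinterior}: what that argument literally yields, via Proposition~\ref{prodcomp}, is $\big(\frac{\partial f}{\partial x}\cdot(f-f(x_0))^c\big)(x_0)=0$, hence $(f-f(x_0))^c(\widehat{x_0})=0$, i.e.\ $f^c(\widehat{x_0})=f(x_0)^c$; you are applying injectivity of $f$, not of $f^c$, so the step $\widehat{x_0}=x_0$ needs an extra justification.

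The paper avoids the propagation problem entirely by a direct argument on the single sphere $\mathbb{S}_{x_0}$. Write the \emph{first-order} quotient $f(x)-f(x_0)=(x-x_0)\cdot g(x)$, so that $g(x_0)=\frac{\partial f}{\partial x}(x_0)$ and $g(x_0^c)=\partial_s f(x_0)$; both are nonzero by Theorem~\ref{thmslder} and the preceding Lemma, so any zero $x_1=\alpha+I\beta$ of $g$ on $\mathbb{S}_{x_0}$ (whose existence is what Theorem~\ref{thmchar} gives) must have $I\neq\pm J$. Now apply the representation formula (Theorem~\ref{representationtheorem}) to $g$ on $\mathbb{S}_{x_0}$ using its known values at $x_0$ and $x_0^c$: the condition $g(\alpha+I\beta)=0$ solves explicitly to
\[
\frac{\partial f}{\partial x}(x_0)\,(\partial_s f(x_0))^{-1}=-(1-IJ)^{-1}(1+IJ),
\]
and the paper argues that for $I\neq\pm J$ this quantity cannot lie in $\mathbb{C}_J^\bot$, contradicting Proposition~\ref{rank}. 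No semislice propagation or Identity Principle is invoked; the contradiction is obtained from quaternionic algebra at the one sphere.
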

\begin{proof}
If, by contradiction, there exists $x_0=\alpha+J\beta\in N_f\neq\emptyset$, 
then, thanks to Theorem \ref{thmchar}, the function $f-f(x_0)$ must have multiplicity 
$n$ greater
or equal to 2 at $\mathbb{S}_{x_0}$. This means that,
\begin{equation*}
 f(x)-f(x_0)=(x-x_0)\cdot g(x),
\end{equation*}
with $g\in\mathcal{SR}(\Omega_D)$ such that $g(x_1)=0$ for some $x_1\in\mathbb{S}_{x_0}$. 
Since $f$ is injective, then $g(x_0)=\frac{\partial f}{\partial x}(x_0)\neq 0$ and
$g(x_0^c)=\partial_s f(x_0)\neq 0$, and so $x_1\neq x_0,x_0^c$.
Now, whereas we know the values of $g$ at $x_0$ and at $x_0^c$, we can apply the 
representation formula in Theorem \ref{representationtheorem} to analyze the behavior 
of $f$ on the sphere 
$\mathbb{S}_{x_0}$. 
The result is the following,
\begin{equation*}
 g(\alpha+I\beta)=\frac{1}{2}\left(\frac{\partial f}{\partial x}(x_0)+\partial_s f(x_0)-IJ\left(\frac{\partial f}{\partial x}(x_0)-\partial_s f(x_0)\right)\right),\quad\forall I\in\mathbb{S}.
\end{equation*}
So, if there exist $I\in\mathbb{S}$ such that $g(\alpha+I\beta)=0$, then,
\begin{equation*}
 \begin{array}{lrcl}
   & \displaystyle\frac{\partial f}{\partial x}(x_0)+\partial_s f(x_0) & = & IJ\left(\displaystyle\frac{\partial f}{\partial x}(x_0)-\partial_s f(x_0)\right)\\
   & & & \\
   \Leftrightarrow & \displaystyle\frac{\partial f}{\partial x}(x_0)(\partial_s f(x_0))^{-1}+1 & = & IJ\left(\displaystyle\frac{\partial f}{\partial x}(x_0)(\partial_s f(x_0))^{-1}-1\right)\\
   & & & \\
   \Leftrightarrow & \displaystyle\frac{\partial f}{\partial x}(x_0)(\partial_s f(x_0))^{-1} & = & -(1-IJ)^{-1}(1+IJ),
 \end{array}
\end{equation*}
with $I\neq J,-J$, but then, since for $I\neq \pm J$ the product $-(1-IJ)^{-1}(1+IJ)$ has a non zero real part\footnote{This can be viewed using the 'scalar-vector' notation.}
, then $\frac{\partial f}{\partial x}(x_0)(\partial_s f(x_0))^{-1}$ does not belong to $\mathbb{C}_J^\bot$ and this is in contradiction with 
Proposition \ref{rank}.
 \end{proof}

\begin{example}
 Let $J\in\mathbb{S}$ be a fixed imaginary unit and $f:\mathbb{H}\setminus\mathbb{R}\rightarrow\mathbb{H}$ be the slice regular function
 defined as,
 \begin{equation*}
  f(\alpha+I\beta)=(\alpha+I\beta)(1-IJ).
 \end{equation*}
This function is constructed, by means of the representation formula, to be equal to zero over the semi-slice $\mathbb{C}_{-J}^+$ and 
to be equal to $2x$ over the opposite semi-slice $\mathbb{C}_J^+$. 
What we want to show is that the restriction $f|_{\mathbb{H}\setminus (\mathbb{R}\cup\mathbb{C}_{-J}^+)}$
 is injective. This is trivial if we restrict the function to a semi-slice $\mathbb{C}_I^+$, so let $x_1=\alpha_1+\beta_1I_1\neq \alpha_2+\beta_2I_2=x_2$, with $I_1\neq I_2$, then
 \begin{equation*}
 \begin{array}{c}
  f(x_1)=f(x_2)\Leftrightarrow\\
  \Leftrightarrow x_1(1-I_1J)=x_2(1-I_2J)\Leftrightarrow \\
  \Leftrightarrow x_1I_1(I_1+J)=x_2I_2(I_2+J) \Leftrightarrow \\
  \Leftrightarrow(x_2I_2)^{-1}(x_1I_1)=-\frac{1}{c}(I_2+J)(I_1+J),
 \end{array}
 \end{equation*}
 where $c=||I_{1}+J||^{2}\neq 0$.
Translating the variables $x_{1}$, $x_{2}$ into their components, we obtain that, the last equality is equivalent to the following one:
\begin{equation*}
 -\frac{1}{\alpha_2^2+\beta_2^2}[-\beta_1\beta_2+\alpha_1\beta_2 I_1-\alpha_2\beta_1 I_2+\alpha_1\alpha_2I_1I_2]=-\frac{1}{c}[I_2I_1+I_2J+JI_1-1].
\end{equation*}
Now we can decompose the last equation into the system involving the real and imaginary parts as follows:

\begin{equation*}
 \left\{\begin{array}{l}
         \displaystyle\frac{c}{\alpha_2^2+\beta_2^2}[\beta_1\beta_2+\alpha_1\alpha_2I_2\cdot I_1]=1+I_1\cdot I_2+(I_1+I_2)\cdot J\\
         \\
         \displaystyle\frac{c}{\alpha_2^2+\beta_2^2}[\alpha_1\beta_2 I_1-\alpha_2\beta_1 I_2+\alpha_1\alpha_2I_2\wedge I_1]=I_2\wedge I_1+(I_2-I_1)\wedge J
        \end{array}
\right. 
\end{equation*}
where $I\cdot J$ and $I\wedge J$ denote the scalar and the vector products\footnote{Here we used the 'scalar-vector' notation.} respectively in $\mathbb{R}^3$.
We will work now on the second equation of the previous system.

After the scalar product of the equation by $I_2-I_1$, we obtain that 
\begin{equation*}
 \alpha_1\beta_2=-\alpha_2\beta_1.
\end{equation*}
Substituting $\alpha_1=-\frac{\beta_1}{\beta_2}\alpha_2$ and multiplying scalarly by $I_1+I_2$ it follows that
\begin{equation*}
 (I_2\wedge I_1)\cdot J=\frac{c}{2}\frac{\alpha_1\beta_2}{\alpha_2^2+\beta_2^2}.
\end{equation*}
Taking into account the previous results and multiplying scalarly by $J$ and then by $I_1$ (or $I_2$), and supposing $\alpha_{2}\neq 0$, we obtain the following two equalities:
\begin{equation*}
 (I_1+I_2)\cdot J=-\frac{1}{2}\left[1+c\frac{\alpha_2}{\beta_2}\frac{\alpha_2\beta_1}{\alpha_2^2+\beta_2^2}\right],\quad I_1\cdot I_2=-\frac{1}{2}.
\end{equation*}
Putting all these ingredients in the first equation of the system one obtain that:

\begin{equation*}
 \frac{c\beta_1}{\alpha_2^2+\beta_2^2}\left[\beta_2+\frac{\alpha_2^2}{2\beta_2}\right]=
  -\frac{1}{2}\frac{c\beta_1}{\alpha_2^2+\beta_2^2}\frac{\alpha_{2}^{2}}{\beta_{2}},
\end{equation*}
and this is possible if and only if $\beta_{2}^{2}=-\alpha_{2}^{2}$, which is absurd. 
If now $\alpha_{2}=0$, following the first part of the same argument, we obtain $\alpha_{1}=0$ and so,
\begin{equation}\label{casealpha}
-\frac{c\beta_{1}}{\beta_{2}}=I_2I_1+I_2J+JI_1-1.
\end{equation}
But then, the imaginary part of $I_2I_1+I_2J+JI_1$, that is $I_2\wedge I_1+I_2\wedge J+J\wedge I_1$, must vanishes. This implies that  $(I_2\wedge I_1)\cdot J=0$ i.e.: 
$J= AI_{1}+BI_{2}$, for some $A$ and $B$ real numbers both different from zero.
In this case equation \ref{casealpha} becomes $A+B+1-\frac{c\beta_{1}}{\beta_{2}}=(1+A+B)I_{1}I_{2}$ and so $I_{1}\wedge I_{2}=0$. The last equalities (since $I_{1}\neq I_{2}$), 
entails $I_{1}=-I_{2}$ but this would imply $\frac{\beta_{1}}{\beta_{2}}=0$ and this is not possible.

Since this function, with the proper restriction, is slice regular and injective then Theorem \ref{maintheorem} says that its real differential is always invertible. 
This fact could also be seen computing the slice and the spherical derivative. Indeed, since 
\begin{equation*}
\partial_{s}f(\alpha+I\beta)=\frac{\beta-\alpha J}{\beta},
\end{equation*}
is always different from zero, we need only to control that the product $\frac{\partial f}{\partial x}(\alpha+I\beta)(\partial_{s}f(\alpha+I\beta))^{-1}$
does not belong to $\mathbb{C}_{I}^{\bot}$. Now,
\begin{equation*}
\frac{\partial f}{\partial x}(\alpha+I\beta)(\partial_{s}f(\alpha+I\beta))^{-1}=(1-IJ)\left(\frac{\beta-\alpha J}{\beta}\right)^{-1}=\frac{\beta(1-IJ)(\beta+\alpha J)}{\beta^{2}+\alpha^{2}},
\end{equation*}
and so, whenever $I\neq -J$, the previous product has a nonzero  real part and so does not belong to $\mathbb{C}_{I}^{\bot}$.
The real differential of $f$ can be represented in a point $x=\alpha+I\beta$, by means of slice and spherical forms, as,
\begin{equation*}
 df(\alpha+I\beta)=d_{sl}x(1-IJ)+d_{sp}x\left(1-\frac{\alpha}{\beta}J\right).
\end{equation*}
\end{example}

\begin{remark}
 In examples \ref{exe2} and \ref{exe3} we have studied the properties of the 
 slice regular function $h$ defined on $\mathbb{H}\setminus\mathbb{R}$ as
 $h(x)=(x+j)\cdot(1-Ii)=x(1-Ii)+(1+Ii)j$, where $x=\alpha+I\beta$. We found that this function admits 
 two surfaces, namely $S_h$ and $\mathbb{C}_{-i}^+$, on which it takes the constant 
 values $0$ and $2j$, respectively. These two surfaces, moreover, 
 yields the singular set $N_h$ of $h$. We want to show now, that,
 on $(\mathbb{H}\setminus\mathbb{R})\setminus(S_h\cup\mathbb{C}_{-i}^+)$ the 
 function is injective. Take then $I$ to be equal to $Ai+Bj+Ck$ and 
 $x=\alpha+I\beta$, then
 the function $h$, decomposed in its components with respect to $1,i,j,k$,
 is equal to,
 \begin{equation*}
  h(x)=\alpha(A+1)-C+(\beta(A+1)+B)i+(\beta B-\alpha C+1-A)j+(\alpha B+\beta C)k.
 \end{equation*}
First of all, since we are excluding the semi-slice $\mathbb{C}_{-i}^+$, we get 
$(A+1)\neq 0$. Given $q=q_0+q_1i+q_2j+q_3k\notin \{0,2j\}$, we want to
compute, where it is possible, $h^-1(q)$. So, we impose the system,
\begin{equation}\label{sys1}
 \begin{cases}
  \alpha(A+1)-C=q_0\\
  \beta(A+1)+B=q_1\\
  \beta B-\alpha C+1-A=q_2\\
  \alpha B+\beta C=q_3,
 \end{cases}
\end{equation}
and, substituting $\alpha=(q_0+C)(1+A)^{-1}$ and $\beta=(q_1-B)(1+A)^{-1}$ in the last two
equations and imposing $A^2+B^2+C^2=1$, we obtain that, for any $q$ such that $q_0^2+q_1^2\neq 0$,
\begin{equation}\label{sys2}
 \begin{cases}
 A=(q_0^2+q_1^2-q_2^2-q_3^2)/||q||^2\\
 B=2(q_0q_3+q_1q_2)/||q||^2\\
 C=2(q_1q_3-q_0q_2)||q||^2.
 \end{cases}
\end{equation}
From the last system we get,
\begin{equation}\label{sys3}
 \begin{cases}
 \alpha=\displaystyle\frac{q_0||q||^2+2(q_1q_3-q_0q_2)}{2(q_0^2+q_1^2)}\\
 \beta=\displaystyle\frac{q_1||q||^2-2(q_0q_3+q_1q_2)}{2(q_0^2+q_1^2)}.
 \end{cases}
\end{equation}
If $q_0^2+q_1^2=0$,then $q_0=0=q_1$ and then we get, from \ref{sys1}, that $q_3=0$ as well.
But then again, substituting $C=\alpha(A+1)$ and $B=-\beta(A+1)$ in the third equation and
imposing $A^2+B^2+C^2=1$, we obtain that, the only possibility are $q_2=0$ or $q_2=2$.
At the end, what we get is that, the image of 
$h$ is described as,
\begin{equation*}
 Im(h)=\left\{q\in\mathbb{H}\,|\,q_0^2+q_1^2\neq0\,q_1>\frac{2(q_0q_3+q_1q_2)}{||q||^2}\right\}\cup\{0,2j\},
\end{equation*}
moreover, since, for any $q\in Im(h)\setminus\{0,2j\}$ we can find only one 
preimage $h^{-1}(q)=\alpha+I\beta$, given by the two systems in equations \ref{sys2} and \ref{sys3}, 
then $\tilde{h}=h|_{(\mathbb{H}\setminus\mathbb{R})\setminus(S_h\cup\mathbb{C}_{-i}^+)}$
is injective and so $N_{\tilde{h}}=\emptyset$.
Its real differential can be expressed in a point $x=\alpha+I\beta$, again by means
of slice and spherical forms, as,
\begin{equation*}
 d\tilde{h}(\alpha+I\beta)=d_{sl}x(1-Ii)+d_{sp}x\left(1-\frac{\alpha}{\beta}i+\frac{k}{\beta}\right).
\end{equation*}
\end{remark}

\begin{remark}
 The reader could ask why we didn't follow the way of proving Theorem \ref{maintheorem} by Gentili, Salamon and Stoppato in \cite{gensalsto}. The answer is 
 that, of course, that proof doesn't work in the case in which the domain of the function does not have real points.
 This fact, rather than being a mere observation, give space to interesting considerations that are not studied in this paper.
 To be precise, the theorem that fails is the following:
 \begin{quote}
 \begin{theorem}
  Let $f:\Omega_D\rightarrow\mathbb{H}$ be a non-constant regular function, and let $\Omega_D\cap\mathbb{R}\neq\emptyset$. For each $x_0=\alpha+I\beta\in N_f$, there 
  exists a $n>1$, a neighborhood $U$ of $x_0$ and a neighborhood $T$ of $\mathbb{S}_{x_0}$ such that for all $x_1\in U$, the sum of the total multiplicities 
  of the zeros of $f-f(x_1)$ in $T$ equals n.
 \end{theorem}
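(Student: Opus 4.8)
The plan is to count the zeros of $f-f(x_1)$ near $\mathbb{S}_{x_0}$ by means of the normal function, turning the statement into a Rouch\'e-type stability result for a holomorphic family of $\mathbb{C}$-valued functions. Write $f=\mathcal{I}(F)$, $F=F_1+\sqrt{-1}F_2$, and for $q\in\mathbb{H}$ consider $N(f-q)=\mathcal{I}\big((F-q)(F-q)^c\big)$. Expanding the product in $\mathbb{H}\otimes_\mathbb{R}\mathbb{C}$ one finds that its two components are $|F_1-q|^2-|F_2|^2$ and $2\,\mathrm{Re}\big((F_1-q)F_2^c\big)$, both \emph{real}-valued; hence $N(f-q)$ is a real slice regular function, induced by a holomorphic stem function $\mathcal{N}_q\colon D\to\mathbb{C}$ (after the usual identification of the real-component subspace of $\mathbb{H}\otimes_\mathbb{R}\mathbb{C}$ with $\mathbb{C}$). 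Crucially, $\mathcal{N}_q(z)$ is a polynomial in $q\in\mathbb{H}\cong\mathbb{R}^4$ with coefficients depending real-analytically on $z$, so $q\mapsto\mathcal{N}_q$ is continuous uniformly on compact subsets of $D$; and $\mathcal{N}_q$ is complex intrinsic.

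Next I would set up the dictionary between the zeros of $f-q$ and those of $\mathcal{N}_q$. For non-real $y=\gamma+L\delta$, $\delta>0$, put $w=\gamma+i\delta\in D$. Then $\mathbb{S}_y\cap V(f-q)\neq\emptyset$ if and only if $\mathbb{S}_y\subset V\big(N(f-q)\big)$ (Corollary~19 in \cite{ghiloniperotti}); since $N(f-q)$ is a real slice function, the right-hand side is equivalent to $\mathcal{N}_q(w)=0$; and since the stem function of $\Delta_y$ is $(z-w)(z-\bar w)$ while $\mathcal{N}_q$ is complex intrinsic, the total multiplicity is $m_{f-q}(y)=\mathrm{ord}_w\mathcal{N}_q$. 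Taking $q=f(x_0)$: $f$ is non-constant and $\Omega_D\cap\mathbb{R}\neq\emptyset$, so $N(f-f(x_0))\not\equiv0$ (otherwise $f\equiv f(x_0)$), hence $\mathcal{N}_{f(x_0)}\not\equiv0$ and $z_0:=\alpha+i\beta$ is a zero of finite order; by Theorem~\ref{thmchar}, applied to $x_0\in N_f$, that order equals $n:=m_{f-f(x_0)}(x_0)\geq2$.

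The analytic core is then Rouch\'e's theorem for the family $\{\mathcal{N}_q\}$. Fix $r>0$ so small that $\overline{D(z_0,r)}\subset D$, the disk $\overline{D(z_0,r)}$ is disjoint from $\mathbb{R}$ and from $\overline{D(\bar z_0,r)}$, and $\mathcal{N}_{f(x_0)}$ is zero-free on $\overline{D(z_0,r)}\setminus\{z_0\}$; set $\varepsilon:=\min_{\partial D(z_0,r)}|\mathcal{N}_{f(x_0)}|>0$. By the uniform continuous dependence of $\mathcal{N}_q$ on $q$ there is a neighborhood $W$ of $f(x_0)$ in $\mathbb{H}$ with $\sup_{\overline{D(z_0,r)}}|\mathcal{N}_q-\mathcal{N}_{f(x_0)}|<\varepsilon$ for all $q\in W$, so by Rouch\'e's theorem $\mathcal{N}_q$ has exactly $n$ zeros in $D(z_0,r)$, counted with multiplicity. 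Now choose a neighborhood $U$ of $x_0$ in $\Omega_D$, disjoint from $\mathbb{R}$, with $f(U)\subset W$ (continuity of $f$), and put $T:=\Omega_{D(z_0,r)\cup\overline{D(\bar z_0,r)}}$, a circular neighborhood of $\mathbb{S}_{x_0}$ whose ``upper'' slice is $D(z_0,r)$. For every $x_1\in U$ the dictionary above identifies the spheres $\mathbb{S}_y\subset T$ meeting $V(f-f(x_1))$ with the zeros $w\in D(z_0,r)$ of $\mathcal{N}_{f(x_1)}$, with $m_{f-f(x_1)}(y)=\mathrm{ord}_w\mathcal{N}_{f(x_1)}$; hence the sum of the total multiplicities of the zeros of $f-f(x_1)$ in $T$ equals the number of zeros of $\mathcal{N}_{f(x_1)}$ in $D(z_0,r)$ counted with multiplicity, that is $n>1$.

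If instead $x_0\in\Omega_D\cap\mathbb{R}$, the same scheme applies with $D(z_0,r)$ chosen symmetric about $\mathbb{R}$: since $\mathcal{N}_q$ is real and non-negative on $\mathbb{R}$ there (it equals $|f-q|^2$), its order of vanishing at a real point is even and equals twice the corresponding total multiplicity, while its non-real zeros come in conjugate pairs of equal order; thus the Rouch\'e count on $D(z_0,r)$ is exactly twice the sum we want, and one takes $n:=m_{f-f(x_0)}(x_0)=\tfrac12\,\mathrm{ord}_{z_0}\mathcal{N}_{f(x_0)}\geq2$. The only delicate point — the main, though modest, obstacle — is this conjugation bookkeeping together with arranging $r$, $W$ and $U$ small enough at once; once that is done, the result is simply the classical stability of zero counts under small holomorphic perturbations, applied to the parametrized family $\{\mathcal{N}_q\}_{q\in\mathbb{H}}$.
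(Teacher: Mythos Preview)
The paper does not actually prove this statement. It appears only inside the final Remark, where it is \emph{quoted} verbatim from \cite{gensalsto} as the key step in the original proof of Theorem~\ref{teoremadaestendere}; the author's point there is precisely that this quoted theorem \emph{fails} once the hypothesis $\Omega_D\cap\mathbb{R}\neq\emptyset$ is dropped (the function $f(\alpha+I\beta)=(\alpha+I\beta)(1-IJ)$ is given as a counterexample), which is why a new proof of Theorem~\ref{maintheorem} was needed. So there is no ``paper's own proof'' to compare against.

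That said, your argument is correct and is essentially the approach used in \cite{gensalsto} and in \cite{genstostru}, Chapter~8: pass to the normal function $N(f-q)$, which is real and hence corresponds to a genuine holomorphic function $\mathcal{N}_q:D\to\mathbb{C}$ intrinsic under conjugation; identify the total multiplicity at $\mathbb{S}_y$ with $\mathrm{ord}_w\mathcal{N}_q$ (halved at real points); then invoke Rouch\'e for the continuous family $q\mapsto\mathcal{N}_q$. Your bookkeeping in both the non-real and real cases is right, including the observation that $\mathcal{N}_q|_{\mathbb{R}}=|f-q|^2\geq0$ forces even vanishing order at real points. The one place to be slightly more careful is in the choice of $T$: you want $T$ to be a circular set whose intersection with the upper half-plane is exactly $D(z_0,r)$, so that spheres in $T$ correspond bijectively to points of $D(z_0,r)$ (or conjugate pairs in the symmetric case); your description does this, but it is worth saying explicitly that no sphere in $T$ can meet $V(f-f(x_1))$ without its representative $w$ lying in $D(z_0,r)$.
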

 \end{quote}
 A counter example, if the domain does not have real points, is given by the function,
 \begin{equation*}
  \begin{array}{c}
   f:\mathbb{H}\setminus\mathbb{R}\rightarrow \mathbb{H}\\
   \alpha+I\beta\mapsto (\alpha+I\beta)(1-IJ),
  \end{array}
 \end{equation*}
for a fixed $J\in\mathbb{S}$. As we have seen, this function is injective over $\mathbb{H}\setminus (\mathbb{R}\cup\mathbb{C}_{-J}^+)$, and so, if we take $x_0=-J\in N_f$,
for any neighborhood $U$ of $-J$ and any neighborhood $T$ of $\mathbb{S}_{-J}$ the sum of total multiplicities of the zeros of $f-f(x_1)$, for any 
$x_1\in U\setminus \mathbb{C}_{-J}^+$ is equal to 1.
The previous function is constructed to be equal to 0 over $\mathbb{C}_{-J}^+$ and equal to $2x$ over $\mathbb{C}_{J}^+$, but other more complex examples can be build in
this way, for example considering a function equal to some monomial $x^m$ on a semi-slice and equal to another different monomial $x^n$ on the opposite.
This feature will certainly be a starting point for future investigations.
\end{remark}

\section{acknowledgement}
The present work was developed while I was a Ph.D. student in Mathematics at the University of Trento.
For this reason I want to thank my former institution and especially my supervisor prof. Alessandro Perotti.
Furthermore I was partially supported by the Project  FIRB ``Geometria Differenziale e Teoria Geometrica delle Funzioni'' and by GNSAGA of INdAM.

\end{document}